\newtheorem{theorem}{Theorem}
\newtheorem{fact}[theorem]{Fact}
\newtheorem{corollary}[theorem]{Corollary}
\newtheorem{example}[theorem]{Example}
\newtheorem{lemma}[theorem]{Lemma}
\newtheorem{proposition}[theorem]{Proposition}
\newtheorem{remark}[theorem]{Remark}
\newenvironment{proof}[1][Proof]{\noindent\textbf{#1.} }{\ \rule{0.5em}{0.5em}}
\newcommand{\sslash}{\mathbin{/\mkern-6mu/}}
\title{A Generic Quotient of a Leavitt Path Algebra is a Leavitt Path Algebra 
}
\author{Ayten Ko\c{c} $^{*}\> $, $\>$  Murad  \"{O}zayd\i n $^{**}$\\ \\}
\date{}
\begin{document}

\maketitle

\newcommand{\q}[1]{``#1''}

\medskip

\begin{abstract}
\noindent
 We provide a complete answer to the question \q{When is a quotient  of a Leavitt path algebra  isomorphic to a Leavitt path algebra?} in terms of the interaction of the kernel of the quotient homomorphism with the cycles of the digraph. A key ingredient is the characterization of finitely generated projective (Leavitt path algebra) modules whose endomorphism algebras are finite dimensional. As a consequence of our characterization we get that any quotient of a Leavitt path algebra divided by its Jacobson radical is a Leavitt path algebra if the coefficient field is large enough. We define a stratification and a parametrization of the ideal space of a Leavitt path algebra (initially in terms of the digraph, later algebraically) and show that a generic quotient of a Leavitt path algebra  is a Leavitt path algebra.  Contrary to most algebraic properties of Leavitt path algebras, our criterion for a quotient to be isomorphic to a Leavitt path algebra is not independent of the field of coefficients. We end this article by pointing out a connection with quantum spaces. \\

\end{abstract}

{\bf Keywords:} Digraphs, Leavitt path algebra, ideals and quotients, nonstable K-theory, quantum spaces \\
\medskip

\medskip

\section{Introduction}

Leavitt path algebras (LPAs) were defined in 2004 in two preprints (published versions \cite{amp07}, \cite{aa05}) as algebraic analogues of graph $C^*$-algebras of functional analysis and named in honor of Leavitt's pioneering works (in the early sixties, culminating in \cite{lea65}) on quantifying the failure of the Invariant Basis Number (IBN) property. In the seventies Cuntz algebras $\mathcal{O}_n$ were defined \cite{Cuntz} as the first concrete examples of separable infinite simple C*-algebras. After several generalizations (such as Cuntz-Krieger algebras) and a connection with symbolic dynamics the di(rected )graph  $\Gamma$ prescribing a generating family and the relations they satisfy became visible.\\

The underlying digraph  $\Gamma$ for both the Leavitt algebra $L(1,n)$ and the Cuntz $C^*$-algebra $\mathcal{O}_n$
consists of one vertex and $n$ loops (the rose with $n$ petals). The generating family (and the relations) of $L(1,n)$ can be identified with the down-sample and up-sample operators of signal processing and the completion of $L(1,n)$ with complex coefficients is $\mathcal{O}_n$. In the last two decades there has been considerable research activity on LPAs and now this area has its own MSCN  (Mathematics Subject Classification Number): 16S88. In addition to the algebras $L(1, n)$ of Leavitt, LPAs include the direct sums of matrix
algebras over fields or Laurent polynomial algebras, algebraic quantum discs
and spheres, and many others. 
The definition of the Leavitt path algebra $L_{\mathbb{F}}(\Gamma)$ of a digraph $\Gamma$ with coefficients in the field $\mathbb{F}$ is given in \ref{LPA} below.  \\

 When  $I$ is a graded ideal of a Leavitt path algebra $L_{\mathbb{F}}(\Gamma)$ it is known that the quotient algebra $L_{\mathbb{F}}(\Gamma)/I$ is isomorphic to a Leavitt path algebra \cite[Theorem 5.7.]{t07}. The original purpose of this note was to answer the natural question \q{When is the quotient algebra  $L_{\mathbb{F}}(\Gamma)/J$ isomorphic to a Leavitt path algebra for an arbitrary ideal $J$ ?} asked to the second named author by M. Kanuni. We give a complete answer in terms of the interaction of the kernel $J$ with the cycles of the digraph $\Gamma$ in Theorem \ref{ana}  which states that the quotient is isomorphic to a Leavitt path algebra if and only if the canonical polynomial generators of the kernel have distinct linear factors. Such ideals are \q{generic} in a sense explained in subsection \ref{3.1} below.\\

In this note by a \enquote{quotient} we mean the morphism $L_{\mathbb{F}}(\Gamma) \longrightarrow L_{\mathbb{F}}(\Gamma)/J$, not just the (isomorphism type of the) object $L_{\mathbb{F}}(\Gamma)/J$. So classifying all quotients of $L_{\mathbb{F}}(\Gamma)$ isomorphic to a Leavitt path algebra amounts to classifying the kernels of such morphisms. In order to talk about genericity we need a space. We define the space of ideals of a Leavitt path algebra via a stratification of the set of ideals and a parametrization of each stratum. \\

There is a filtration on the set of ideals of $L_{\mathbb{F}}(\Gamma)$,  indexed by the lattice of graded ideals with layers consisting of all ideals containing a given graded ideal. The stratification corresponding to this filtration has strata consisting of the ideals $\{ J\}$ with the graded ideal $I$ being the largest graded ideal contained in $J$. A finer filtration is indexed by pairs $(I, \beta)$ where $\beta$ is a collection of cycles in $\Gamma$ with all exits in $I$. The parametrization of the strata of this finer stratification is explained in subsection \ref{3.1} below.\\

In fact, the stratification by the graded ideals can be defined without mentioning the (standard) $\mathbb{Z}$-grading on $L_{\mathbb{F}}(\Gamma)$; the lattice of graded ideals is a Morita invariant of (the ungraded algebra) $L_{\mathbb{F}}(\Gamma)$. Also the finer stratification and the parametrization of the strata can be defined algebraically, without referring to the digraph $\Gamma$ (Theorem \ref{cebirsel}).\\

 The main ingredient for proving the necessity of our criterion for a quotient of $L_{\mathbb{F}}(\Gamma)$ to be isomorphic to a Leavitt path algebra  
 is the characterization of finitely generated projective $L_{\mathbb{F}}(\Gamma)$-modules whose endomorphism algebras are finite dimensional in Theorem \ref{26}. The proof of sufficiency is constructive, we give an explicit digraph $\Gamma  \sslash  J$ and an isomorphism from $L_{\mathbb{F}}(\Gamma) /J$ to  $L_{\mathbb{F}}(\Gamma \sslash J)$. As a consequence of our criterion for the quotient to be isomorphic to a Leavitt path algebra we show that a generic quotient of a Leavitt path algebra over an algebraically closed field is  isomorphic to a Leavitt path algebra. \\
 
 The paper is organized as follows. In   \q{Notation and Preliminaries}   
we give the definitions of  Leavitt path algebras and relevant concepts. We also state (and sometimes prove) some basic facts that we will use later. Facts we need that are (or should be) well known involving algebras and modules (not specifically  Leavitt path algebras) which are elementary but seem to be difficult to locate in the literature in the form that we will use them, are given in the appendix \cite{ko} which also contains a novel orthogonality relation between projective modules and ideals. \\

In the subsection on ideals we summarize the known characterizations of both graded and arbitrary ideals of Leavitt path algebras in terms of their generators \cite{t07}, \cite{r14}: An arbitrary ideal $J$ of a Leavitt path algebra is generated by 3 types of elements: (i) the vertices in $J$, (ii) idempotents coming from breaking vertices of $J$ and (iii) canonical polynomial generators $f_C(C)$ where $C$ is a cycle in $\Gamma$ all of whose exits are in $J$. Such a cycle $C$ contributes a canonical polynomial generator if and only if $\mathbb{F}[C] \cap J$ is a proper nonzero ideal of $\mathbb{F}[C]$ and  $f_C(x)$ is the minimal polynomial of $C$.\\

In subsection \ref{3.1} we give the aforementioned stratification of the ideal space and the parametrization of each stratum in terms of the canonical (polynomial) generators of ideals. We show that the lattice of graded ideals is a Morita invariant of $L_{\mathbb{F}}(\Gamma)$ by establishing a bijection between the graded ideals of $L_{\mathbb{F}}(\Gamma)$
and the closed submonoids of $\mathcal{V}(L_{\mathbb{F}}(\Gamma))$, the monoid of isomorphism classes of finitely generated projective  $L_{\mathbb{F}}(\Gamma)$-modules under direct sum. In subsection \ref{3.2} we characterize simple and finitely generated indecomposable projective $L_{\mathbb{F}}(\Gamma)$-modules and describe our parametrization of the ideal space algebraically, independent of the digraph $\Gamma$. \\

The next subsection containing our main results starts with  the useful fact that  cycles with no exit in a digraph may be replaced with loops without changing the isomorphism type of Leavitt path algebra (Lemma \ref{loop}). After a few lemmas we characterize finitely generated projective right $L_{\mathbb{F}}(\Gamma)$-modules whose endomorphism algebras are finite dimensional 
as finite sums of cyclic right ideals generated by \textit{sinks} and \textit{leaks} (defined in subsection \ref{cizge} below). As mentioned above this fact is crucial in the proof of the necessity of our criterion for the quotient of a Leavitt path algebra to be isomorphic to a Leavitt path algebra.\\

For every ideal $J$ of the Leavitt path algebra $L_{\mathbb{F}}(\Gamma)$ we construct a digraph $\Gamma \sslash J$. The quotient $L_{\mathbb{F}}(\Gamma)/J$ is isomorphic to a Leavitt path algebra if and only if every canonical polynomial generator of $J$ is a product of distinct linear factors, in which case $L_{\mathbb{F}}(\Gamma)/J \cong L_{\mathbb{F}}(\Gamma \sslash J)$ by Theorem \ref{ana}. As a consequence, when the coefficient field $\mathbb{F}$ is large enough (for instance, if $\mathbb{F}$ is algebraically closed), the nilradical of $L_{\mathbb{F}}(\Gamma)/J$ equals the Jacobson radical of $L_{\mathbb{F}}(\Gamma)/J$ and this is the obstruction to $L_{\mathbb{F}}(\Gamma)/J$ being a Leavitt path algebra (Theorem \ref{jake} and Corollary \ref{primitif}). \\

The sufficiency of our criterion  implies that a generic quotient of a Leavitt path algebra is isomorphic to a Leavitt path algebra (Theorem \ref{generic}). When the coefficient field $\mathbb{F}$ is algebraically closed, even if  $L_{\mathbb{F}}(\Gamma)/J$ is not isomorphic to a Leavitt path algebra the ideal $J$ is in the closure of a set of ideals $\{J'\}$
with $L_{\mathbb{F}}(\Gamma)/J' \cong L_{\mathbb{F}}(\Gamma \sslash J)$. Another curious consequence of Theorem \ref{ana} is Corollary \ref{epi}: The kernel of an epimorphism $\varphi: L_{\mathbb{F}}(\Gamma) \longrightarrow L_{\mathbb{F}}(\Gamma')$ is a graded ideal if $\Gamma'$ has neither a sink nor a leak.\\

 While most algebraic properties of Leavitt path algebras are independent of the coefficient field (a partial list is given in the paragraph before Example \ref{Complex}) our criterion for the quotient of a Leavitt path algebra to be isomorphic to a Leavitt path algebra (Theorem \ref{ana}) does depend on the coefficient field, as seen in Examples \ref{Complex}, \ref{ch=2} and \ref{n} in the last section. A few more examples illustrate Theorem \ref{ana}. We end this section (and this note) with some remarks about the connection of the digraphs in our examples with graph $C^*$-algebras and quantum spaces via the cofunctor $L_{\mathbb{F}}$ from the category of digraphs and admissible digraph morphisms (defined in section \ref{4} below) to $*$-algebras \cite{ht24} (which, at the level of objects, is the construction of a Leavitt path algebra from a digraph).\\

 The cofunctor $L_{\mathbb{F}}$ seems to be an \textit{algebraic} quantum analog of the Gelfand-Naimark cofunctor assigning the algebra of complex valued continuous functions to a compact Hausdorff topological space. There is also a quantum analog of the Stone-Weierstrass Theorem in this context \cite[Corollary 7.6]{t07} stating that a completion of a Leavitt path algebra with complex coefficients, with respect to an appropriate norm, is the graph $C^*$-algebra defined by the same digraph. In this analogy Leavitt path algebras are noncommutative counterparts of algebras of polynomial functions and since some quantum spaces are graph $C^*$-algebras, some Leavitt path algebras can perhaps be thought of as quantum varieties.

\section{Notation and Preliminaries}

We start with the definitions of digraphs, Leavitt path algebras and related concepts, then state and sometimes prove some basic facts that we will need later. 
 \subsection{Digraphs} \label{cizge} 
  
  A {\em di}(\textit{rected} ){\em graph} $\Gamma$ is a four-tuple $(V,E,s,t)$ where $V$ is the set of vertices, $E$ is the set of arrows (directed edges), $s$ and $t:E \longrightarrow V$ are the source and  the target functions. 

\begin{remark}
A digraph is also called an \q{oriented graph} in graph theory, a
\q{diagram} in topology and category theory, a \q{quiver} in representation theory,
usually just a \q{graph} in graph $C^*$-algebras. The notation $\Gamma =(V,E,s,t)$  for a digraph is fairly standard in graph theory, except \q{$A$} for arrows (or arcs) is also  used instead of \q{$E$} for  (directed or oriented) edges. In quiver representations $Q = (Q_0, Q_1, s, t)$ is
more common, however $h$ (denoting head) and $ t$ (denoting tail) and are also used instead of $t$ and $s$, respectively. 
In graph $C^*$-algebras $E = (E^0, E^1, s, r)$ is  used where $r$ stands for \q{range}. We prefer the graph theory
notation which involves two more letters but no subscripts or superscripts. As
in quiver representations we view $\Gamma$ as a small category, so \q{arrow} is preferable
to \q{edge}, similarly for \q{target} versus \q{range} (also range sometimes means image rather than target).
\end{remark}

The digraph $\Gamma$ is \textit{finite} if $E$ and $V$ are both finite. $\Gamma$ is \textit{row-finite} if $s^{-1}(v)$ is finite for all $v$ in $V$. If $s^{-1}(v)= \emptyset$ then  the vertex $v$ is a \textit{sink}; if $t^{-1}(v)= \emptyset$ then $v$ is a \textit{source}. 
%An \textit{isolated vertex} is both a source and a sink. 
If $t(e)=s(e)$ then the arrow $e$ is  a \textit{loop}. 
%A \textbf{pseudo-source} is a vertex $v$ where $t^{-1}(v)$ consists of a single loop. 
If $W\subseteq V$ then $\Gamma_{W}$ denotes the \textit{full subgraph} on $W$, that is, $\Gamma_W=(W, s^{-1}(W) \cap t^{-1}(W),  s\vert_{_W},  t\vert_{_W})$.\\

A \textit{path} $p=e_{1}\ldots e_{n}$ of length $n>0$ is a sequence of arrows $e_{1},\ldots ,e_{n}$ such
that $t(e_{i})=s(e_{i+1})$ for $i=1,\ldots ,n-1$. The source of $p$ is $s(p):=s(e_{1})$  and the target of $p$ is $t(p ):=t(e_{n})$.  A path of length 0 is a single vertex $v$ where $s(v):=v $ and $t(v) := v$. We will denote the length of $p$ by $l(p)$ and the set of paths in $\Gamma$ by $Path(\Gamma)$.  An 
\textit{infinite path} $\alpha = e_1e_2 \cdots e_k \cdots$  is an infinite sequence of arrows $e_1, e_2, \cdots ,e_k ,\cdots $
 with $t(e_i)=s(e_{i+1})$ for each positive integer $i$; now $s(\alpha)=s(e_1)$ but $t(\alpha)$ is not defined. An arrow $e$ (respectively, a vertex $v$) is said to be on a path $p$ if $e$ is one of the arrows in the sequence defining $p$ (respectively, if $v$ is $s(e)$ or $t(e)$ for some arrow $e$ on $p$).  An \textit{exit} of a path $p$ is an arrow $e$ which is not on $p$ but $s(e)$ is on $p$.  A path $C=e_1e_2 \cdots e_n$ with $n>0$ is a \textit{cycle} if $s(C )=t(C )$ and $s(e_{i})\neq s(e_{j})$ for $1 \le i\neq j\leq n$. We consider the cycles $e_1e_2 \cdots e_n$ and $e_2e_3 \cdots e_ne_1$ (geometrically) equivalent. 
 The digraph $\Gamma$ is \textit{acyclic} if it has no cycles. \\

The preorder {\bf leadsto}, denoted by $\leadsto$, on $V$ is defined as $u  \leadsto w$  if there is a path $p \in Path(\Gamma)$ from $u$ to $w$, i.e., $s(p)=u$ and $t(p)=w$. The set of \textit{successors} of a vertex $u$ is  
                $$V_{u \> \leadsto}=\{w \in V \> | \>  u \leadsto w \}.$$
 If $X \subseteq V$ then we define $V_{X \leadsto} = \bigcup_{u \in X} V_{u \leadsto}\>$.
 The set of \textit{predecessors} of a vertex $u$ is 
 $$V_{ \leadsto u}=\{w \in V \> | \>  w \leadsto u \}.$$   
 If $X \subseteq V$ then we define $V_{\leadsto X} = \bigcup_{u \in X} V_{ \leadsto u}\>$. The full subgraphs on $V_{u \> \leadsto}$, $V_{X\leadsto}$, $V_{ \leadsto u}$ and $V_{ \leadsto X}$  are denoted by $\Gamma_{u \leadsto}$, $\Gamma_{X\leadsto}$, $\Gamma_{ \leadsto u}$ and $\Gamma_{ \leadsto X}$ respectively. (The digraph $\Gamma_{u \>\leadsto}$ of the successors of $u$ has been called the \textit{tree of }$u$ and denoted by $T(u)$ in the literature, even though it may contain cycles.)\\

A vertex $v$ is called a {\bf branch  vertex} if $\vert s^{-1}(v) \vert  \geq 2$. (A branch vertex has also been called a \textit{bifurcation vertex} previously.) A {\bf leak} is a vertex $v$ such that  $\Gamma_{v \> \leadsto} $ has neither a cycle nor a branch vertex. A vertex $v$ is a leak if and only if there is a unique path $p_n$ of length $n$ with $s(p_n)=v$ and $t(p_m)\neq t(p_n)$ for all $m\neq n \in \mathbb{N}$. Two leaks $u$ and $v$ are equivalent if they have a common successor, that is, $V_{u \leadsto} \cap V_{v\leadsto} \neq \emptyset$. \\

\noindent
Let $H \subseteq V$ in a digraph $\Gamma$. \\
(i) $H$ is \textit{hereditary} if $\> u \in H\>$ and $\> u \leadsto w$ implies that $w \in H$.\\
 (ii) $H$ is \textit{saturated} if $\> 0<|s^{-1}(u)|< \infty \>$ and  $\> \{t(e) \> | \> e \in s^{-1}(u) \} \subseteq H \>$ implies that $u \in H$.\\

                 The hereditary saturated \textit{closure} $\overline{S}$                of $S \subseteq V$ is the smallest hereditary and saturated subset of $V$ containing S. Since the intersection of hereditary saturated sets are also hereditary and saturated, the hereditary saturated closure of $S$ is the intersection of all hereditary and saturated subsets of $V$ containing $S$. Hereditary saturated subsets of $V$ form a lattice with the meet $X \wedge Y:=X\cap Y$ and the join $X \vee Y:= \overline{X\cup Y}$.

\subsection{Leavitt Path Algebras} \label{LPA}

The {\bf path algebra } $\mathbb{F}\Gamma$ of a digraph $\Gamma=(V,E, s,t)$ with coefficients in the field $\mathbb{F}$ is the $\mathbb{F}$-algebra generated by $V\sqcup E$ satisfying the following relations:\\

\indent(V)  $uv = \delta_{u,v}u$ for all $u,v \in V \> \> \> \quad   $ (where $\delta_{u,v}$ is the Kronecker delta)\\
\indent($E$)  $s ( e ) e  = e=e t( e)  $ for all $e \in E$ \\

\noindent
Alternatively, $\mathbb{F}\Gamma$ may be defined as the $\mathbb{F}$-vector space of formal linear combinations of paths in $\Gamma$ where the product of $p$ and $q$ in $Path(\Gamma)$ is the path $pq$ if $tp=sq$ and  0 otherwise. \\

Given a digraph $\Gamma$ the \textit{doubled  digraph} of $\Gamma$ is $\tilde{\Gamma} := (V,E \sqcup E^*, s~,t~)$ where $E^* :=\{e^*~|~ e\in E \}$,  the functions $s$ and $t$ are  extended as $s(e^{\ast}):=t(e)$ and $t(e^{\ast }):=s(e) $ for all $e \in E$. Thus the dual arrow $e^*$ has the opposite orientation of $e$. We extend $*$ to an operator defined on all paths of $\tilde{\Gamma}$: Let $v^*:=v$ for all $v$ in $V$, $(e^*)^*:=e $ for all $e$ in $E$ and $ (e_1 \ldots e_n)^*:= e_n^* \ldots e_1^*$ for $e_1, \ldots , e_n $ in $E \sqcup E^* $. In particular $*$ is an involution, that is, $**=id$. \\

The \textbf{ Leavitt path algebra} $L_{\mathbb{F}}(\Gamma)
$ of a digraph $\Gamma$ with coefficients in the field  $\mathbb{F}$ is the quotient of the path algebra $\mathbb{F}\tilde{\Gamma}$ by the following relations: \\

\indent(CK1) $e^*f =  \delta_{e,f} ~t(e)\> \> $ for all $e,f\in E$ \\
\indent(CK2) $v= \sum_{s(e)=v} ee^*\> \> $  for all $v$ with $\> \> 0< \vert s^{-1}(v) \vert < \infty $\\

\textit{From now on we will omit the parentheses around the inputs of the source and target functions to reduce notational clutter. Also, $L_{\mathbb{F}}(\Gamma)$ will be abbreviated to  $L(\Gamma)$ and $dim^\mathbb{F}(-)$ to $dim(-)$ since the coefficient field $\mathbb{F}$ will remain the same throughout this note.}  
\\

As a consequence of (CK1) we get that 
$\{ pq^* \vert p, q \in Path(\Gamma) \text{ and } tp=tq\}$ spans $L(\Gamma)$. Another straightforward consequence of (CK1) is:

\begin{fact} \label{p*q} If $p$ and $q$ are paths in $\Gamma$ then  
    $p^*q = 0$ in $L(\Gamma)$ unless $p$ is an initial segment of $q$ or $q$ is an initial segment of $p$. 
\end{fact}

$L(\Gamma)$ is a $\mathbb{Z}$-graded $*$-algebra where $deg(v)=0$ for $v$ in $V$, $deg(e)=1$ and $deg(e^*)=-1$ for $e$ in $E$. (This defines the standard  $\mathbb{Z}$-grading on $L(\Gamma)$ since all relations are homogeneous.) The $*$-algebra structure on $L(\Gamma)$ is given by extending the operator $*$ to $L(\Gamma)$ as

$$\Big( \sum_{i=1}^n \lambda_ip_iq_i^* \Big)^*:=  \sum_{i=1}^n \lambda_i^*q_ip_i^* $$

\noindent
where $\lambda \mapsto \lambda^*$ is an automorphism of $\mathbb{F}$ with $(\lambda^*)^* =\lambda$ for all $\lambda \in \mathbb{F}$. The default choice for this involutive automorphism is $id_{\mathbb{F}}$ for an arbitrary field, however when $\mathbb{F}=\mathbb{C}$ it is more natural to use conjugation. Since $(ab)^*=b^*a^*$ for all $a, b$ in $L(\Gamma)$ and 
$deg(a^*)=-deg(a)$ for all homogeneous $a \in L(\Gamma)$, we see that
$*$ is a grade reversing involutive anti-automorphism.
 Hence the categories of left modules and right modules are equivalent for $L(\Gamma)$.\\

If $\Gamma$ has finitely many vertices then $L(\Gamma)$ has $1= \sum_{v \in V} v$. In general, $L(\Gamma)$ (also $\mathbb{F}\Gamma$) has local units given by finite sums of distinct vertices since for all $a,\> b \in L(\Gamma)$ 
the sum $u$ of all distinct $sp$ and $sq$ when $a$ and $b$ are expressed as a linear combination of $\{ pq^*\}$ satisfies $ua=a=au$ and $ub=b=bu$. Rings with local units behave similarly as rings with $1$ (see \cite{ko}).\\
 
 In fact, $L(\Gamma)$ has $1$ if and only if $\Gamma$ has finitely many vertices: if $1=\sum_{i=1}^n \lambda_ip_iq_i^*$ and $v \notin \{sp_i\}_{i=1}^n$ then $v 1=0$. So, the necessity of having finitely many vertices uses the basic fact that all the vertices in $\Gamma$ are nonzero elements in  $L(\Gamma)$ which can be shown by constructing a representation of $L(\Gamma)$ sending each vertex to a nonzero operator \cite[Lemma 1.5]{goo09}. The following proposition shows that many elements of a Leavitt path algebra are nonzero.

\begin{proposition} \label{nonzero}
    (i) If $p, q \in Path(\Gamma)$ with $tp=tq$ then $pq^*$ is nonzero in $L(\Gamma)$.\\
    (ii) If $v\in V$ and finite $Z \subsetneqq  s^{-1}(v)$ then $v_Z := v-\sum_{e \in Z} ee^* $ is nonzero in $L(\Gamma)$.\\
    (iii) $ Path(\Gamma)$ is a linearly independent subset of $L(\Gamma)$. Hence the obvious homomomorphism from $\mathbb{F}\Gamma$ to $L(\Gamma)$ is one-to-one.
\end{proposition}
\begin{proof}
    (i) $p^*(pq^*)q=tp$ by (CK1), $pq^* \neq 0$ since   $tp\neq 0$.\\
    
    (ii) If $e\in s^{-1}(v) \setminus Z$ then $v_Ze=e\neq 0$ by (CK1) and (i). Hence $v_Z \neq 0$.\\

(iii) The homomorphism $\varphi$ from $\mathbb{F}\Gamma$ to $L(\Gamma)$ sending each path to itself is a graded homomorphism, so its kernel is a graded ideal. If $\sum \lambda_ip_i$ is a homogeneous element in $\mathbb{F}\Gamma$ then $\varphi (\sum \lambda_ip_i  ) = \sum \lambda_ip_i $ in $L(\Gamma)$ with $l(p_i)=n$ for all $i$, for some $n$. For any (finite) linear combination $\sum \lambda_i p_i$ with $\lambda_i$ nonzero and $l(p_i)=n$ for all $i$ we have $\lambda_j^{-1}p_j^*\sum \lambda_i p_i = tp_j \neq 0$  by Fact \ref{p*q}. Hence $Ker \varphi =0$, equivalently   
$Path(\Gamma)$ is  linearly independent in $L(\Gamma)$. 
\end{proof}\\
  
Proposition \ref{nonzero}(iii) above is in \cite[Lemma 1.6]{goo09}. Using this fact we identify the path algebra $\mathbb{F}\Gamma$ with the subalgebra of the Leavitt path algebra $L(\Gamma)$ generated by the vertices and arrows in $\Gamma$.\\

Except when $v$ is a leak, the following is well-known \cite[Lemma 2.2.7]{aam17}.

\begin{lemma} \label{2.2}

(i) If $v$ is a sink or a leak in $\Gamma$ then $vL(\Gamma)v\cong \mathbb{F}$ where $v \leftrightarrow 1$. \\
    (ii) If $C$ is a cycle in $\Gamma$ with no exit and $sC=v$ then $vL(\Gamma)v\cong \mathbb{F}[x,x^{-1}]$ where $x\leftrightarrow C$. 

\end{lemma}
\begin{proof}
   (i) $vL(\Gamma)v$ is spanned by $pq^*$ with $p,\> q \in Path(\Gamma)$ and $sp = v = sq$. If $v$ is a sink then $p = q = v = pq^*$.  Hence $vL(\Gamma)v \cong \mathbb{F}$ where $v\leftrightarrow 1$. \\

If $v$ is a leak then $vL(\Gamma)v$ is spanned by $\{pp^* \> \vert \> p \in Path(\Gamma), \> sp=v \}$ by the definition of a leak since there is a unique path $p$ of length $n$ with $sp=v$ for all $n\in \mathbb{N}$. If $p=e_1e_2\cdots e_n$ then  $pp^*= v$ by repeated application of (CK2) since $se_i$ is not a branch vertex, so $e_ie_i^*= se_i$ for $i=1,2, \cdots , n$. 
Hence $vL(\Gamma)v\cong \mathbb{F}$ where $v\leftrightarrow 1$. \\

(ii) Since $C$ has no exit, $ee^*=se$ by (CK2) for any arrow $e$ on $C$. So $CC^*=v$ and also $C^*C=v$ by (CK1). Since $v=1$ in $vL(\Gamma)v$ we get $C^{-1}=C^*$ and The homomorphism $\varphi$ from $\mathbb{F}[x,x^{-1}]$ to $vL(\Gamma)v$ is defined as $\varphi (x)= C$. This homomorphism is onto: $vL(\Gamma)v$ is spanned by $\{pq^* \> | \> sp=v=sq, \> tp=tq\}$. 
If $tp \neq v$ then $tp=ee^*$ by (CK2) where $e$ is the unique arrow on $C$ with $se=tp$. So $pq^*=pe(qe)^*$ and we can repeat as needed to get $pq^*=C^mC^{*n}=C^{m-n}$ for some $m$ and $n$ in $\mathbb{N}$. The homomorphism $\varphi$ is also one-to-one since $\{\varphi(x^n)=C^n\}_{n \geq 1}$ is linearly independent (in the path algebra $\mathbb{F}\Gamma$, hence also) in $vL(\Gamma)v \subseteq L(\Gamma)$.
\end{proof}\\

If $M$ is an $L(\Gamma)$-module then $V_M:=\{v\in V \vert \> Mv\neq 0\}$
is called the {\bf support} of $M$ and the {\bf support digraph} $\Gamma_M$ of $M$ is the full subgraph of $\Gamma$ on $V_M$. The support $V_M$ of an $L(\Gamma)$-module $M$ is an isomorphism invariant since an $L(\Gamma)$-module isomorphism $\varphi: M \longrightarrow N$ induces vector space isomorphisms $\varphi \vert_{Mv} : Mv \longrightarrow Nv$ for each $v \in V$. \\

\begin{fact}\label{iso}
    If $p$ is a path in $\Gamma$ then \\
    (i) $tpL(\Gamma) \stackrel{p\underline{\>\>}}{\> \longrightarrow} spL(\Gamma)$ is a split $L(\Gamma)$-monomorphism.\\
    (ii) $spL(\Gamma) \stackrel{p^*\underline{\>\>}\> }{\> \longrightarrow} tpL(\Gamma)$ is a split $L(\Gamma)$-epimorphism. \\
    (iii) If $0< \vert s^{-1}(v)\vert < \infty$ then $\displaystyle \bigoplus_{se=v} teL(\Gamma) \stackrel{\bigoplus e\underline{\>\>}}{\longrightarrow} vL(\Gamma)$ is an isomorphism.\\
(iv) $tpL(\Gamma) \cong pp^*L(\Gamma) =pL(\Gamma)$.

     \end{fact}
    \begin{proof}
       By (CK1) $p^*p=tp$. Hence $p^*\underline{\>\>} \circ p\underline{\>\>}= tp \underline{\>\>} =id_{tpL(\Gamma)}$, thus (i) and (ii) fellow. The inverse of the  $L(\Gamma)$-isomorphism $\displaystyle\bigoplus_{se=v} e\underline{\>\>} \> $ in (iii) is $\displaystyle\prod_{se=v} e^*\underline{\>\>} \>$ since $v=\displaystyle\sum_{se=v} ee^*$ by (CK2).\\

Since  $pp^*L(\Gamma) \subseteq pL(\Gamma) =pp^*pL(\Gamma) \subseteq pp^*L(\Gamma)$ we have  $pp^*L(\Gamma) = pL(\Gamma)$. The homomorphisms $tpL(\Gamma) \stackrel{p\underline{\>\>}}{\longrightarrow} pL(\Gamma)$ and 
$pL(\Gamma) \stackrel{p^*\underline{\>\>}\>}{\> \longrightarrow}  p^*pL(\Gamma)=tpL(\Gamma)$ are inverses of each other since $p^*p =tp$ and $pp^*p=p$. Therefore $pp^*L(\Gamma)=pL(\Gamma) \cong tpL(\Gamma)$, that is, (iv) holds.
       \end{proof}\\

A deep fact going back to a seminal result in \cite{Bergm} and carried over to the context of Leavitt path algebras  in \cite{amp07}
is Theorem \ref{3.2.11} below. Here $v_Z:=v-\sum_{e\in Z} ee^*$ where the vertex $v$ in $\Gamma$ is an infinite emitter and $Z$ is a finite subset of $s^{-1}(v)$. 

\begin{theorem}
    \cite[Corollary 3.2.11]{aam17} \label{3.2.11}
$P$ is a finitely generated projective $L(\Gamma)$-module if and only if $P$ is isomorphic to a finite direct sum of modules of the type $uL(\Gamma)$ with $u\in V$ and $v_ZL(\Gamma)$ where $v_Z=v-\sum_{e\in Z} ee^*$ for an infinite emitter $v$ and a finite subset $Z$ of $s^{-1}(v)$.
\end{theorem}

\subsection{Ideals of Leavitt Path Algebras}

Ideals of Leavitt path algebras are well-studied in the literature, first for finite digraphs then for row-finite digraphs and finally for arbitrary digraphs. Graded ideals are rather special, dividing by a graded ideal yields again a Leavitt path algebra (up to isomorphism). Since vertices are homogeneous elements in a Leavitt path algebra,  any ideal generated by a collection of vertices is a graded ideal. It's easy to check that the set of vertices contained in an ideal is hereditary and saturated. If the digraph $\Gamma$ is row-finite then every graded ideal $I$ of its Leavitt path algebra $L(\Gamma)$ is generated by the set of vertices in $I$. Moreover $L(\Gamma)/I$ is isomorphic to the Leavitt path algebra of the digraph obtained by deleting the vertices $I\cap V$ from $\Gamma$. (This is a consequence of the more general  Theorem \ref{5.7} below.) These statements are no longer true when the digraph has infinite emitters. We need the concept of breaking vertices. \\

Let $\Gamma$ be a digraph and $H$ be a hereditary subset of $V$. A vertex $v$ is called a 
 \textit{breaking vertex} of $H$ \cite[Definition 2.4.4]{aam17} if $v$ belongs to the set 
$$B_H:=\{ v \in V \> \> \vert \> \>   s^{-1}(v) \textit{ is infinite and } 0< \vert s^{-1}(v) \cap t^{-1}(V\setminus H)\vert <\infty\}\> .$$

 \noindent 
$B_H \subseteq V\setminus H$ since $H$ is hereditary. If $v \in B_H$ then $v^H := v- \sum_{e \in Z} ee^*\> $
where $Z:=\{ e\in s^{-1}(v) \> \vert  \> te \notin H\}$, that is, $v^H=v_Z$ in the notation of Proposition \ref{nonzero}(ii).\\

We note that $v^H$ is homogeneous of degree $0$ in the standard $\mathbb{Z}$-grading on
$L(\Gamma)$. For any subset $ S\subseteq B_H$, we define $S^H \subseteq L(\Gamma)$ as $\{ v^H \> \vert \> v \in S\} $. The pair $(H,S)$ is called  \textit{admissible} if $H$ is a hereditary saturated subset of $V$ and $S \subseteq  B_H$. 
   
   \begin{theorem} \cite[Theorem 5.7.]{t07} \label{5.7}
    Let $\Gamma = (V,E, s, t)$ be a digraph, and let $L (\Gamma)$ be its Leavitt path algebra. For an admissible pair $(H,S) $ let $(H\cup S^H)$ denote the ideal generated by $H\cup  S^H$. Then\\
(i) Every graded ideal $I$ of $L(\Gamma)$ is generated by $H\cup S_I^H$, where $H=I \cap V $, and $S_I:=\{v\in B_H \> \vert \> v^H \in I\}$.\\
(ii) If $(H, S)$ is an admissible pair then  $L (\Gamma)/(H\cup S^H) \cong L (\Gamma / (H, S))$ where $\Gamma / (H, S)$ is the digraph defined by
$$V_{\Gamma / (H,S)}:=(V\setminus H) \cup \{v'\> \vert \>  v \in B_H \setminus S\}$$
$$E_{\Gamma / (H,S)} := \{ e \in E \> \vert \> te \notin H\} \cup \{e' \> \vert  \> e \in E, \>  te\in B_H\setminus S \} $$
and $s$ and $t$ are extended to $E_{\Gamma / (H, S)}$ by setting $se' = se$ and $te' = (te)'$.
 \end{theorem}

In particular, every graded ideal of $L(\Gamma)$ is generated by a set of homogeneous idempotents of grade $0$.\\

When $J$ is an arbitrary ideal of $L(\Gamma)\> $ let $H=J\cap V$ and $S_J=\{v\in B_H \> \vert \> v^H \in J\}$ as above. It follows from Theorem \ref{5.7} that the ideal $(H\cup S_J^H)$ is the largest graded ideal contained in $J$. We will denote the digraph $\Gamma / (H,S_J)$ by $\Gamma / J$. 
The digraph $\Gamma/J$ is obtained from $\Gamma$ by deleting the vertices in $H:=J\cap V$, adding a sink $v'$ for each breaking vertex $v$ in $B_H \setminus S_J$ and adding a new arrow $e'$ with $se'=se$ and $te'=v'$ for each arrow $e$ with $te \in B_H \setminus S_J$ .\\

 If  $I=(H \cup S_J^H)$ then $\Gamma /J= \Gamma/ I$. We have a  canonical epimorphism from  $L(\Gamma / J) \cong L(\Gamma)/ (H\cup S_J^H)$ to
$L(\Gamma)/J$ mapping $v \in B_H \setminus S_J$ to $v-v^H+J$, the corresponding new sink $v'$ to $v^H+J$, each arrow $e$ with $te \in B_H \setminus S_J$ to $e(v-v^H) +J$ and the corresponding arrow $e'$ to $ev^H +J$. \\
       
Arbitrary ideals of Leavitt path algebras of row-finite digraphs were characterized in terms of their generators in  \cite{c11}. This was  extended to arbitrary digraphs in \cite{abcr12}. This result was improved and put in a more concise form (some redundant generators were disposed of) in \cite{r14}. Here is an equivalent version which we will use:

\begin{theorem} \label{Ranga}
If $J$ is an ideal of the Leavitt path algebra $L(\Gamma)$ with $\Gamma$ an arbitrary digraph then $J$ is generated by \\
(i) $H:=J\cap V\> $, $\quad$ (ii) $S_J^H$,\\
(iii) the canonical polynomial generators $\{f_C(C)\}$ where $C$ is a cycle in $\Gamma$ such that $J\cap \mathbb{F}[C]$ is a proper nonzero ideal of $\mathbb{F}[C] \cong \mathbb{F}[x]$ and $f_C(x)\in \mathbb{F}[x]$ is the unique  generator with $f_C(0)=1$ of the ideal in $\mathbb{F}[x]$ corresponding to the ideal $J\cap \mathbb{F}[C]$ in $\mathbb{F}[C]$. 
\end{theorem}

The cycles $C$ corresponding to the canonical polynomial generators have no exit in $\Gamma / J$, hence such $C$ are exclusive in $\Gamma$. If $C=e_1e_2 \cdots e_n$ with $f_C(C)$ a canonical polynomial generator then $e_1^*f_C(C)e_1=f_C(D)$ in $L(\Gamma/J)$ where $D=e_2 \cdots e_ne_1$. Therefore the polynomial $f_C(x) \in \mathbb{F}[x]$ depends only on the geometric cycle $C$. If $C$ is a cycle corresponding to a polynomial generator of $J$ then the corner algebra $sCL(\Gamma/J)sC \cong \mathbb{F}[x,x^{-1}]$ by Lemma \ref{2.2}(ii).  The polynomial $f_C(C) \in \mathbb{F}[C]$ is the unique generator of $J\cap sC L(\Gamma /J)sC$ with  constant term $sC$. In fact, $f_C(x)= det ( id-xC^*)$ where $C^*:= \underline{\> \> \> }C^*$ is the linear transformation on the corner algebra $sC\big(L(\Gamma)/J\big) sC$ given by right multiplication with $C^*$. \\

A generalization of the Cuntz-Krieger Uniqueness Theorem \cite[Theorem 2.2.16]{aam17}  immediately follows (if $\Gamma$ satisfies Condition (L) then (ii) below is vacuously true):
\begin{corollary}
 If $\varphi$ is a homomorphism from  $L(\Gamma)$ to an algebra $A$ then $\varphi$ is one-to-one if and only if the following conditions are satisfied:\\
 (i) $\varphi(v)\neq 0$ for all vertices $v$ in $\Gamma$;\\
 (ii)  for each cycle $C$ with no exit in $\Gamma$ the set $\{\varphi (C^n) \}_{n=1}^\infty$ is linearly independent.
\end{corollary}
\begin{proof} 
If $\varphi$ is one-to-one then (i) and (ii) are satisfied by Proposition \ref{nonzero}. Conversely, if (i) is satisfied then $H:=V \cap Ker\varphi =\emptyset$ hence there are no breaking vertices of $H$. If (ii) is also satisfied then $Ker \varphi$ has no canonical polynomial generators either. Thus $Ker\varphi =0$ by Theorem \ref{Ideal}. 
\end{proof}

\begin{remark} \label{nil}
    Another immediate consequence of Theorem \ref{Ranga} is that the nilradical of a Leavitt path algebra is zero since every ideal generated by idempotents and non-constant polynomials in cycles none of which are nilpotent. The Jacobson radical of a Leavitt path algebra is also zero \cite[Proposition 2.3.2]{aam17}. The nilradical and the Jacobson radical of a Leavitt path algebra with coefficients in a commutative ring is studied in \cite{db24}.
\end{remark}

\section{Quotients of Leavitt Path Algebras} 

\subsection{Nonstable K-theory and the Ideals of $L(\Gamma)$ } \label{3.1}

We stratify the set of all ideals of $L(\Gamma)$ by the poset (in fact, the lattice) of graded ideals of $L(\Gamma)$: the ideal $J$ is in the stratum corresponding to the graded ideal $I$ if $I$ is the largest graded ideal contained in $J$, that is, $I$ is generated by all homogeneous elements in $J$. The graded ideal $I$ is the smallest ideal in the stratum defined by $I$ and $I$ is the intersection of all ideals in this stratum. \\

We further stratify each stratum corresponding to the graded ideal $I$ by the poset of the subsets of the set of cycles with no exit in $\Gamma/I$. Hence the finer strata correspond to ordered pairs $(I, \beta)$ where $I$ is a graded ideal of $L(\Gamma)$ and $\beta$ is a collection of cycles with no exit in $\Gamma /I$.
The stratum $(I,\emptyset)$ is the singleton $\{I\}$.\\

Finally we partition and parametrize each stratum $\mathcal{J}(I,\beta)$ 
corresponding to $(I,\beta)$ with $\beta \neq \emptyset$ using a degree function $d$ from $\beta$ to positive integers: Let $\mathcal{J}(I,\beta,d)$ be the set of non-graded ideals $J$ of  $L(\Gamma)$ with $I$ being the largest graded ideal contained in $J$, the subset of cycles with no exit in $\Gamma/I= \Gamma/J$ that contribute canonical polynomial generators to $J$ being $\beta$ and $d(C):=deg f_C(x)>0$. The set $\mathcal{J}(I,\beta, d)$ is parametrized by $\prod \big(\mathbb{F}^{d(C)} \setminus \{0\}\big)$ where the product is over all cycles $C\in \beta$ and 
the coordinates of each factor $  \mathbb{F}^{d(C)} \setminus \{0\}$ are the coefficients $(a_1, a_2,\cdots , a_{d(C)})$ of $f_C(x)$ where $f_C(C)$ is the canonical polynomial generator corresponding to the cycle $C$ with no exit in $\Gamma /J$. \\

The {\bf ideal space} of $L(\Gamma)$ is the set of ideals of $L(\Gamma)$ equipped with the stratification and the parametrization given above. We summarize this discussion: 

\begin{theorem} \label{Ideal}
If $\Gamma$ is an arbitrary digraph and $L(\Gamma)$ is its Leavitt path algebra then there is a one-to-one correspondence between the (two-sided) non-graded ideals of $L(\Gamma)$ and quadruples $(H,S,\beta, \theta)$ where $H$ is a hereditary and saturated subset of $V$, $S$ is a subset of the breaking vertices $B_H$ of $H$, $\beta$ is a nonempty collection of (geometric) cycles with no exit in $\Gamma/(H,S^H)$ and $\theta$ is a function from $\beta$ to polynomials $f(x)\in \mathbb{F}[x]$ of positive degree with $f(0)=1$.
    \end{theorem}
    \begin{proof}
     Given a non-graded ideal $J$ of $L(\Gamma)$ we define \\
     (i) $H=J\cap V$,\\
     (ii)  $S=S_J: =\{v \in B_H \> \vert \> v^H\in J\}$,\\      (iii) $\beta$ as the nonempty collection of (exclusive, geometric) cycles in $\Gamma$ such that $J\cap \mathbb{F}[C]$ is a proper nonzero ideal of $\mathbb{F}[C] \cong \mathbb{F}[x]$. \\
     (iv) $\theta(C)= f_C(x)$ where $f_C(x)\in \mathbb{F}[x]$ is the unique generator with $f_C(0)=1$ of the ideal of $\mathbb{F}[x]$ corresponding to $J\cap \mathbb{F}[C]$ in $\mathbb{F}[C]$. \\

     Conversely, given $(H,S,\beta, \theta)$ we define $J$ to be the ideal generated by $H$, $S^H$ and $\{f(C) \>\vert \> f(x)=\theta(C) \textit{ with } C\in \beta \}$. The discussion above the theorem explains that these assignments give a one-to-one correspondence. 
     \end{proof}\\

The stratification of the space of ideals of $L(\Gamma)$ and parametrization of the strata given in Theorem \ref{Ideal} are in terms of  the standard grading on $L(\Gamma)$ and the digraph $\Gamma$. In fact this stratification and parametrization  are independent of the grading  
on $L(\Gamma)$ and the digraph $\Gamma$.  
 To see this we will utilize the nonstable K-theory of $L(\Gamma)$ and a characterization of finitely generated indecomposable projective modules of Leavitt path algebras. \\ 

The nonstable K-theory $\mathcal{V}(A)$ of an algebra (or a ring) $A$ is the additive monoid of isomorphism classes of finitely generated projective $A$-modules under direct sum. There is a pre-order $\twoheadrightarrow$ on $\mathcal{V}(A)$ where $[P] \twoheadrightarrow [Q]$ if and only if $[P]=[Q] +[Q']$ for some $[Q']$ in $\mathcal{V}(A)$, equivalently 
there is an epimorphism from $P$ to $Q$. This pre-order is compatible with addition: if $[P] \twoheadrightarrow   [Q]$ then  $[P]+ [P'] \twoheadrightarrow [Q]+ [P']$. A submonoid $S$ of $\mathcal{V}(A)$ is \textbf{closed} if $[P] \in S$ and $[P] \twoheadrightarrow [Q] $ implies that $[Q] \in S$. (We prefer the term \textit{closed} to the more common \textit{order-ideal} since these submonoids are given by a Kuratowski closure operator of a Galois connection.) Arbitrary intersections of closed submonoids are closed submonoids. The closed submonoid generated by $X \subseteq \mathcal{V}(A)$, denoted by $\overline{X}$, is the intersection of all closed submonoids of $\mathcal{V}(A)$ 
containing $X$, that is, the smallest closed submonoid containing $X$.\\
 
We say that $[P] \in \mathcal{V}(A)$ is {\bf orthogonal} to $J \vartriangleleft A$, denoted $[P] \perp J$, if and only if $Hom^A(P,A/J)=0$. We have a Galois connection between  the subsets of $\mathcal{V}(A)$ ordered by inclusion and the ideals of $A$ ordered by reverse inclusion:
$$\{ J \vartriangleleft A \} \stackrel{\varphi}{\longrightarrow}  2^{\mathcal{V}(A)} \quad   \textit{
and} \qquad  2^{\mathcal{V}(A)}  \stackrel{\psi}{\longrightarrow} \{ J \vartriangleleft A \}$$
where $\varphi(J) := \{[P] \> \vert \> [P] \perp J \}$ and $ \psi(X) : = \cap \{I \vartriangleleft A \> \vert \> [P] \perp I \textit{ for all } [P] \in X \}$.

\begin{theorem} \label{Gal}
    The Galois connection defined above gives a Galois correspondence between the graded ideals of $L(\Gamma)$ and the closed submonoids of $\mathcal{V}(L(\Gamma))$, which is a lattice isomorphism when both are ordered by inclusion. 
\end{theorem}

\begin{proof} By \cite[Theorem 7]{ko}, $(\varphi, \psi)$ is a Galois connection and $\varphi (J)$ is a closed submonoid of $\mathcal{V}(L(\Gamma))$. We need to show that the image of $\varphi$ contains all closed submonoids of $\mathcal{V}(L(\Gamma))$ and $\psi(2^{\mathcal{V}(L(\Gamma))})$ is the collection of the graded ideals of $L(\Gamma)$. We will show that each closed submonoid $X$ of $\mathcal{V}(L(\Gamma))$ is uniquely and completely specified by an admissible pair 
$(H,S)$ with $(H,S^H)$  generating the ideal $\psi (X)$. This will prove the result since we have a one-to-one correspondence between admissible pairs and graded ideals of $L(\Gamma)$ by Theorem \ref{5.7}. \\

Let $H_X:= \{ v \in V \> \vert \> [vL(\Gamma)] \in X\}$ 
for a closed submonoid $X$ of $\mathcal{V}(L(\Gamma))$. The set $H_X$ is hereditary and saturated since $seL(\Gamma) \twoheadrightarrow teL(\Gamma)$ for all $e \in E$ and $vL(\Gamma)\cong \oplus_{se=v} teL(\Gamma)$ for all $v \in V$ with $0 < \vert s^{-1}(v) \vert < \infty$ by Fact \ref{iso}(ii) and (iii). Recall that the set of breaking vertices $B_{H_X}$ is the set of infinite emitters $u \in V$ with $s^{-1}(u)\cap t^{-1}(V \setminus H_X)$ finite and nonempty. Let $S_X:= \{ u \in B_{H_X} \> \vert \>  [u^{H_X} L(\Gamma)] \in X \}$ where 
$u^{H_X}= u-\sum ee^*$ with the sum over $\{ e \in s^{-1}(u)\> \vert \> te \notin H_X\}$. Thus $\{[vL(\Gamma)]\}_{v \in H_X}$ and $\{ [u^{H_X} L(\Gamma)] \}_{ u \in S_X }$ are subsets of $X$. \\

Let $Y$ be the smallest closed submonoid of $\mathcal{V}(L(\Gamma))$ containing $\{[vL(\Gamma)]\}_{v \in H_X}$ and $\{ [u^{H_X} L(\Gamma)] \}_{ u \in S_X }$. By definition $Y\subseteq X$. To see $X\subseteq Y$ first we note that every finitely generated projective $L(\Gamma)$-module is a finite direct sum of modules isomorphic to $vL(\Gamma)$ and $u_ZL(\Gamma)$ where $u_Z=u-\sum_{e\in Z} ee^*$ for an infinite emitter $u$ and a nonempty finite subset $Z$ of $s^{-1}(u)$ by Theorem \ref{3.2.11}. Also if $[P] \in X$ then $P$ is isomorphic to a finite direct sum of such modules and each summand is in $X$ because $X$ is closed. Hence it suffices to show that if $[vL(\Gamma)] \in X$ then $[vL(\Gamma)] \in Y$ and if $[u_ZL(\Gamma)] \in X$ then $[u_ZL(\Gamma)] \in Y$.\\

If $[vL(\Gamma)] \in X $ then $v \in H_X$ by definition, hence $[vL(\Gamma)] \in Y$. When $[u_ZL(\Gamma)] \in X$ then there are two cases:  $u \in H_X$ and 
$u \notin H_X$. If $u \in H_X$ then $uL(\Gamma) \twoheadrightarrow u_ZL(\Gamma) $ because $u L(\Gamma) \cong u_ZL(\Gamma) \oplus  (\sum_{e\in Z} ee^*)L(\Gamma)$ since $u_Z$ and $\sum_{e\in Z} ee^*$ are orthogonal idempotents with $u_Z + \sum ee^* =u$. Thus $[u_ZL(\Gamma)] \in Y$ because $Y$ is closed. \\

 When $u \notin H_X$, if $f \in s^{-1}(u)\setminus Z$ then $u_ZL(\Gamma)\twoheadrightarrow ff^*L(\Gamma)$ because $u_ZL(\Gamma)\cong ff^*L(\Gamma) \oplus (u_Z -ff^*)L(\Gamma)$ since $ff^*$ and $u_Z-ff^*$ are orthogonal idempotents with
$u_Z=ff^* +u_Z -ff^*$. Since $[u_ZL(\Gamma)] \in X$ and $X$ is closed we have $[ff^*L(\Gamma)] \in X$. Also  $[tfL(\Gamma)]=[ff^*L(\Gamma)]$ by Fact \ref{iso}(iv) so %$[tfL(\Gamma)] \in X$
$tf \in H_X$. Hence $\{ e\in s^{-1}(u) \> \vert \> te \notin H_X\} \subseteq Z$ because $tf \in H_X$ for each $f \in s^{-1}(u) \setminus Z$. If $te \in H_X$ for all $e \in s^{-1}(u)$ then $[uL(\Gamma)]= [u_ZL(\Gamma)] + \sum_{e\in Z} [teL(\Gamma)]$ because $\{u_Z\} \cup \{ee^*\}_{e\in Z}$ is a set of orthogonal idempotents (using Fact \ref{iso}(iv) again). Since $X$ is  a submonoid of $\mathcal{V}(L(\Gamma))$ and $[u_ZL(\Gamma)] \in X$, $[teL(\Gamma)] \in X$ for all $e \in Z$, we have $[uL(\Gamma)] \in X$. So $u \in H_X$ and $[uL(\Gamma)] \in Y$. Moreover $[u_ZL(\Gamma)] \in Y$ because $uL(\Gamma) \twoheadrightarrow u_ZL(\Gamma)$. Finally, if $s^{-1}(u)\setminus H_X$ is nonempty then $u \in B_{H_X}$ and $u^{H_X}=u_Z + \sum_{g \in Z\setminus S_X} gg^*$ because $\{e \in s^{-1}(u) \> \vert \> te\notin H_X\} \subseteq Z$. Hence  $u^{H_X}L(\Gamma) \twoheadrightarrow u_ZL((\Gamma)$ because $u_Z$ and $\sum_{g \in Z\setminus S_X} gg^*$ are orthogonal idempotents. Thus $[u_ZL(\Gamma)] \in Y$ and so $X=Y$.  \\

We have shown that every closed submonoid $X$ of $\mathcal{V}(L(\Gamma))$ is the smallest submonoid containing $\{[vL(\Gamma)]\}_{v \in H_X}$ and $\{ [u^{H_X}L(\Gamma)]\}_{u \in S_X}$. Let $$P=P_X:= \Big(\bigoplus_{v \in H_X} vL(\Gamma)\Big)\oplus \Big( \bigoplus_{u \in S_X} u^{H_X}L(\Gamma) \Big).$$
By \cite[Theorem 7]{ko}  $X=P_\twoheadrightarrow$ and $X^\perp =P^\perp$. By \cite[Proposition 6(vi)]{ko} $P^\perp$ is the set of ideals containing $H_X \cup \{u^{H_X} \}_{u \in S_X}$. Hence $\psi(X)$ is the  ideal generated by $H_X \cup \{u^{H_X} \}_{u \in S_X}$, in particular $\psi(X)$ is a graded ideal. Conversely, if $H \subseteq V$ is hereditary saturated and $S \subseteq B_H$ then the ideal $I$ generated by $H \cup \{u^H \}_{u \in S}$ satisfies $H=I\cap V$ and $S=S_I:= \{ u \in B_H\> \vert \> u^H \in I\}$. Every graded ideal is of this form and we have a one-to-one correspondence between graded ideals and such pairs $(H,S)$ by Theorem \ref{5.7}. Thus the image of $\psi$ is the set of graded ideals of $L(\Gamma)$. \\

When $I$ is a graded ideal, $[vL(\Gamma)] \in \varphi (I)={^\perp I}$ if and only if $v \in I$ by \cite[Proposition 6(v)]{ko}, hence $H_{\varphi(I)} =I\cap V$. Similarly $[u^{I\cap V}L(\Gamma)] \in \varphi(I)$ if and only if $u^{I\cap V} \in I$, hence $S_{\varphi(I)}=S_I$. It follows that $\psi\varphi(I)= I$ for every graded ideal $I$ of $L(\Gamma)$ and $\varphi \psi (X)=X$ for all closed submonoids of $\mathcal{V}(L(\Gamma))$. Hence the image of $\varphi$ is the collection of closed submonoid of $\mathcal{V}(L(\Gamma))$ and we are done.   
\end{proof}\\

The monoid $\mathcal{V}(L(\Gamma))$ and its collection of closed submonoids are determined by the module category of the algebra $L(\Gamma)$. So Theorem \ref{Gal} has the consequence that the lattice of graded ideals of $L(\Gamma)$ is determined without mentioning the standard $\mathbb{Z}$-grading on $L(\Gamma)$. This was shown in \cite[Theorem 5.3]{amp07} for row-finite digraphs via the lattice isomorphism between graded ideals and hereditary saturated subsets of vertices. For arbitrary digraphs a proof (using several earlier results) is given in \cite[Theorem 3.6.23(i)]{aam17}. We give a different self-contained proof below, using a technique that will also be employed later.

\begin{corollary} \label{12}
The lattice of graded ideals of a Leavitt path algebra is a Morita invariant.
\end{corollary}

\begin{proof}
    Since finitely generated and projective can be defined in terms of the category of modules, $\mathcal{V}(L(\Gamma))$ is a Morita invariant. The partial order $\twoheadrightarrow$ and thus the lattice of closed submonoids are defined via the monoid structure of $\mathcal{V}(L(\Gamma))$. Hence the lattice of graded ideals of $L(\Gamma)$ which is isomorphic to the poset of closed submonoids of $\mathcal{V}(\Gamma)$ is also Morita invariant by Theorem \ref{Gal}. 
\end{proof}

\begin{remark}
   A precursor of Corollary \ref{12} is Proposition 2.3. in \cite{ko1} stating that there is an $F_E$-grading of $L(\Gamma)$, where $F_E$ is a free group on the set $E$ of arrows, which is universal among all gradings of $L(\Gamma)$ with all vertices and all arrows in $\Gamma$ being homogeneous elements. Every (standard) $\mathbb{Z}$-graded ideal of $L(\Gamma)$ is generated by some  vertices and  elements $u^H$ for some breaking vertices $u$ by Theorem \ref{5.7}(i). Since these generators are homogeneous with respect to the universal $F_E$-grading, if $I$ is a graded ideal with respect to the standard $\mathbb{Z}$-grading then $I$ is a graded ideal 
 with respect to every grading with $V\sqcup E$ homogeneous.  
   \end{remark}

The following is a corollary of the proof of Theorem \ref{Ideal}. It will be used to describe the stratification and the parametrization of the \textit{ideal space} of $L(\Gamma)$ in terms of the module category of $L(\Gamma)$.

\begin{corollary} \label{15}
   Let $I$ be a graded ideal of $L(\Gamma)$ and let $X={^\perp I}$, the corresponding closed submonoid of $\mathcal{V}(L(\Gamma))$. The following conditions on an $L(\Gamma)$-module $M$ are equivalent.

   \indent
   (i) $I\subseteq Ann(M)$.\\
   \indent
   (ii) $Hom(P,M)=0$ for all $[P] \in X$.\\
   \indent
   (iii) $Mv=0 = Mu^{H_I}$ for all $v \in H_I=I\cap V=H_X$ and for all $u \in S_I=S_X$.\\
   \indent (iv) $Hom(P_X, M)=0$ where $P_X:= \Big(\bigoplus_{v \in H_X} vL(\Gamma)\Big)\oplus \Big( \bigoplus_{u \in S_X} u^{H_X}L(\Gamma) \Big)$.   

   \noindent
   The full subcategory $\mathfrak{M}^X$ of $Mod_{L(\Gamma)}$ whose modules satisfy any of the equivalent conditions above is a Serre subcategory isomorphic to $Mod_{L(\Gamma/I)}$. The intersection of the ideals $J$  with $L(\Gamma)/J$  in  $\mathfrak{M}^X$ is $I$.
\end{corollary}

\begin{proof}
    Since $I$ is generated by $H_
    I=I\cap V$ and $\{u^{H_I} \>\vert \> u \in S_I \}$ by Theorem \ref{Ideal} we see that $(i)$ is equivalent to $(iii)$. By  the proof of Theorem \ref{Ideal}
    $H_X=H_I$ and $S_X=S_I$. Hence, 
as in the proof of Theorem \ref{Ideal}, $Hom(P_X, M)=0$ if and only if (ii) is satisfied. By \cite[Lemma 4(ii)]{ko} $Hom(vL(\Gamma), M) \cong Mv$ for all $v\in H_I$ and $Hom (u^{H_I}L(\Gamma),M)=Mu^{H_I}$ for all $u \in S_I$. Therefore (i) and (ii) are also equivalent. \\

The full subcategory $\mathfrak{M}^X$ of modules satisfying (i), that is, the subcategory of $L(\Gamma)$-modules annhilated by $I$, contains the $0$ module and $\mathfrak{M}^X$ is closed under isomorphism, quotients and submodules. Since $I$ is generated by idempotents, $\mathfrak{M}^X$ is also closed under extensions. Hence $\mathfrak{M}^X$ is a Serre subcategory of $Mod_{L(\Gamma)}$. 
Modules annihilated by $I$ are naturally identified with $L(\Gamma)/I$-modules  and $L(\Gamma)/I \cong L(\Gamma/I)$ by Theorem \ref{5.7}(ii). By Theorem \ref{Ideal} and (ii) above 
the intersection of the ideals $J$  with $L(\Gamma)/J$  in  $\mathfrak{M}^X$ is $\psi(X)= I$.
\end{proof}

\begin{remark}
    If $I$ is an ideal of a ring $R$  then the full subcategory of modules annhilated by $I$ is always naturally identified with the category of $R/I$-modules. This subcategory is closed under isomorphisms, quotients and submodules, but it is closed under extensions if and only if $I=I^2$ when $R$ has local units. 
\end{remark}

\noindent
We have established order isomorphisms between $4$ posets related to $L(\Gamma)$: 

(i) Graded ideals of $L(\Gamma)$.

(ii) Closed submonoids of $\mathcal{V}(L(\Gamma))$.

(iii) Serre subcategories $\mathfrak{M}^X$ of $Mod_{L(\Gamma)}$ indexed by the closed submonoids of $\mathcal{V}(L(\Gamma))$. 

(iv) Pairs $(H,S)$ with $H$ a hereditary saturated subset of $V$ and $S$ a subset of $B_H$, the breaking vertices of $H$.\\

\begin{fact}
    Let $\preceq$ be the partial order on pairs $\{(H,S)\}$ defined as $(H,S) \preceq (K,T)$ if $H\subseteq K$ and $H\cup S \subseteq K\cup T$ where $H$ is a hereditary saturated subset of $V$ and $S \subseteq B_H$. In the one-to-one correspondence of Theorem \ref{5.7} between graded ideals of $L(\Gamma)$ and such pairs $(H,S)$ given by 
     $(H\cup S^H) \leftrightarrow (H,S)$ 
inclusion of (graded) ideals corresponds to $\preceq$.
   \end{fact}
\begin{proof}
 Assume that $(H,S) \preceq (K,T)$, so $H\subseteq K$ and $H\cup S \subseteq K\cup T$. If $u \in H$ then $u \in  (K\cup T^K)$ since $H\subseteq K$. If $u \in S$ and $u \in K$ then $u^H=u-\sum e(te)e^* \in (K\cup T^K)$ since $se=u \in K$ and so $te \in K$ because $K$ is hereditary. If $u \in S$ and $u \notin K$ then $u^H =u^K-\sum f(tf)f^* \in (K \cup T^K)$ where the sum is over $ s^{-1}(u)\cap (K\setminus H)$. Hence, if $(H,S)\preceq (K,T)$ then $(H\cup S^H) \subseteq (K\cup T^K)$.\\

 If $(H\cup S^H) \subseteq (K\cup T^K)$ then $H= (H\cup S^H)\cap V \subseteq (K\cup T^K) \cap V =K$ by Thereom \ref{5.7}. If $u \in S$ then $u$ is an infinite emitter and  $s^{-1}(u) \cap t^{-1}(V\setminus K) $ is finite since it is contained in $s^{-1}(u) \cap t^{-1}(V\setminus H)$. If $s^{-1}(u) \cap t^{-1}(V \setminus K)$ is not empty then $u\in B_K$ and $u^K=u^H +\sum e(te)e^* \in (K\cup T^K)$ where the sum is over $s^{-1}(u) \cap t^{-1}(K\setminus H)$. Hence  $u\in T$. If $s^{-1}(u)\cap t^{-1}(V\setminus K)$ is empty, i.e., $t(s^{-1}(u))\subseteq K$ then $u=u^H +\sum e(te)e^* \in (K\cup T^K)\cap V=K$ since $u^H \in S^H \subseteq (K\cup T^K)$ and  
 $te \in K$ for all $e\in s^{-1}(u)$. Therefore $H\cup S \subseteq K\cup T$ in all cases.
 \end{proof}\\

Since the poset of graded ideals is a lattice so are all other posets order isomorphic to it. However, the lattice $\{ \mathfrak{M}^X\}$ of Serre subcategories of $Mod_{L(\Gamma)}$ indexed by closed submonoids of $\mathcal{V}(L(\Gamma))$ is \q{upside down} because $X \mapsto \mathfrak{M}^X$ is order reversing. These 4 lattices are Morita invariants of $L(\Gamma)$ because  closed submonoids of $\mathcal{V}(L(\Gamma))$ are Morita invariants. In the lattice of graded ideals \q{meet} is intersection and \q{join} is sum, however the join of closed submonoids $X$ and $Y$ in the lattice of closed submonoids of $\mathcal{V}(L(\Gamma))$ is not $X+Y$ in general, but $\overline{X+Y}$, the closure of $X+Y$. That is, the lattice of closed submonoids of $\mathcal{V}(L(\Gamma))$ is not a sublattice of all submonoids of $\mathcal{V}(L(\Gamma))$ in general.\\

We are now able to describe the strata of the space of ideals of $L(\Gamma)$ in terms of the closed submonoids of $\mathcal{V}(L(\Gamma))$: If $J$ is an ideal of  $L(\Gamma)$ then the largest graded ideal contained in $J$ is $\psi \varphi (J)$ and its stratum is $\varphi^{-1}(\varphi (J))$, in the notation of Theorem \ref{Gal}. To describe the finer strata $\mathcal{J}(I,\beta)$ and their parametrization algebraically, independent of the digraph $\Gamma$, we will need a characterization of finitely generated indecomposable projective (henceforth denoted as {\bf fgip}) modules of Leavitt path algebras.

\subsection{Finitely Generated Indecomposable Projectives} \label{3.2}

The finer strata $\mathcal{J}(I,\beta)$ of the space of ideals of $L(\Gamma)$ were defined in terms of the cycles with no exit in $\Gamma/I$.
We will now establish a one-to-one correspondence between such cycles and isomorphism classes of non-simple fgips of $L(\Gamma/I)$.

\begin{lemma} \label{baz}
(i) If $v$ is a sink in $\Gamma$ then $X:=\{q^*  \> \vert \> q \in Path(\Gamma)\>, \> tq=v \}$ is an $\mathbb{F}$-basis for the projective $L(\Gamma)$-module $vL(\Gamma)$.\\
(ii) When $v$ is a leak, let $p_0=v$ and 
$p_n=e_1e_2 \cdots e_n$ be the unique path of length $n$ with $sp_n=v$. Then 
$X:=\{ p_nq^* \> \vert \>  n \in \mathbb{N} ,\> 
q \in Path(\Gamma) , \> tq=te_n, \> q\neq q'e_n \}$ is an $\mathbb{F}$-basis for $vL(\Gamma)$.\\
(iii) If $v$ is a sink or a leak in $\Gamma$ then the projective $L(\Gamma)$-module $vL(\Gamma)$ is simple.\\
 (iv) If $C$ is a cycle with no exit and $v=sC$ then  the projective module $vL(\Gamma)$ is an $({\mathbb{F}}[x,x^{-1}] ,L(\Gamma))$-bimodule and free as a $vL(\Gamma)v \cong {\mathbb{F}}[x,x^{-1}]$-module with basis $P_C^*$ where $P_C=\{ p 
\in Path(\Gamma) \> \vert \> tp=v \textit{ and } p\neq qC \}$.
\end{lemma}
 \begin{proof}
 (i) The set $\{pq^* \> \vert \> sp=v \>, tp=tq \> \}$  spans $vL(\Gamma)$. If $v$ is a sink then $p=v$. If  $q^*r\neq 0$ with $tq=v=tr$ then $q$ is an initial segment of $r$ or $r$ is an initial segment of $q$ by Fact \ref{p*q} and $q=r$ since $v$ is a sink. If $\lambda_1 q_1^* + \lambda_2q_2^* + \cdots + \lambda_nq_n^*=0$  with distinct $q_i^* \in X$ and $\lambda_i \in \mathbb{F}$ for $i=1,2, \dots , n$ then $(\lambda_1 q_1^* + \lambda_2q_2^* + \cdots + \lambda_nq_n^*)q_j=\lambda_jv=0$.  Proposition \ref{nonzero}(i) implies that $\lambda_j=0$  for each $j$, hence $X$ is an $\mathbb{F}$-basis for $vL(\Gamma)$.\\

 (ii) When $v$ is a leak $\{p_nq^* \vert \> n \in \mathbb{N},\> q \in Path(\Gamma), \> tq=tp_n \}$ spans $vL(\Gamma)$ since there is a unique path $p_n$ of length $n$ with $sp_n=v$. if $q=q'e_n$ then $p_nq^*=p_{n-1}q'^*$ because $se_n=e_ne_n^*$ by (CK2). Repeating this if necessary we see that $X$ spans $vL(\Gamma)$. If $p_mq^*(p_nr^*)^* =p_mq^*rp_n^*
 \neq 0$ with $p_mq^*, \> p_nr^* \in X$ then $q$ is an initial segment of $r$ or $r$ is an initial segment of $q$ by Fact \ref{p*q}. If $q$ is an initial segment of $r$ then $r=qr'$ and $p_mq^*rp_n^*=p_mr'p_n^*$. Since $tr' =tr=tp_n$ and $tp_m=tq=sr'$ we get   $p_mr'=p_n$ because $v$ is a leak. Then $r' = tp_n$, otherwise the last arrow of $r$ would be $e_n$ contradicting the definition of $X$. Hence $q=r$ and $m=n$ (because $te_m \neq te_n$ if $m=n$ as $v$ is a leak). Similarly, if $r$ is an initial segment of $q$ implies $q=r$ and $m=n$. Therefore $xy^*\neq 0$ with $x, \>y \in X$ implies that $x=y$. If $\lambda_1 x_1 + \lambda_2x_2 + \cdots + \lambda_nx_n=0$ with distinct $x_i\in X$ and $\lambda_i \in \mathbb{F}$ for $i=1,2, \dots , n$ then $(\lambda_1 x_1 + \lambda_2x_2 + \cdots + \lambda_nx_n)x_j^*=\lambda_jv=0$.  Proposition \ref{nonzero}(i) implies that $\lambda_j=0$  for each $j$, hence $X$ is an $\mathbb{F}$-basis for $vL(\Gamma)$.\\

 (iii) We have seen above that
 $xx^*=v$ and if $x\neq y$ then 
 $xy^* =0 $ for all $x, y$ in the corresponding basis $X$. If $0\neq \lambda_1x_1+\lambda_2x_2 +\cdots +\lambda_nx_n \in vL(\Gamma)$ with $\lambda_1\neq 0$ and $x_i$ distinct elements of $X$ then $(\lambda_1x_1+\lambda_2x_2 +\cdots +\lambda_nx_n)\lambda_1^{-1}x_1^*=v $. Since $v$ generates $vL(\Gamma)$ the only nonzero $L(\Gamma)$-submodule of $vL(\Gamma)$ is $vL(\Gamma)$. Hence $vL(\Gamma)$ is simple.\\

 (iv) Let $v=sC$, so $vL(\Gamma)$ 
is a $(vL(\Gamma)v ,L(\Gamma))$-bimodule and $vL(\Gamma)v \cong \mathbb{F}[x,x^{-1}]$ by Lemma \ref{2.2}(ii). 
If $pq^* \in L(\Gamma)$ with $sp=v$ and $tp=tq$ then $tp \in V_C$ since $C$ has no exit. Such $pq^*$ spans $vL(\Gamma)$. By repeated applications of (CK2) as needed we may assume that $tp=sC=tq$ (again since $C$ has no exit). Using $CC^*=v$ we may now express such $pq^*$ as $C^nr^*$ with $r \in P_C$ and $n \in \mathbb{Z}$. Hence 
$\{C^np^* \> \vert \> n \in \mathbb{Z}\textit{ and } \> p \in P_C \}$ spans $vL(\Gamma)$. Thus any element in $vL(\Gamma)$ can be expressed as $\sum_{i=1}^m \alpha_i p_i^*$ with $\alpha_i \in vL(\Gamma)v \cong \mathbb{F}[x,x^{-1}]$ and $p_i$ distinct elements of $P_C$. We can recover $\alpha_j$  as $(\sum_{i=1}^m \alpha_i p_i^*)p_j$ for all $j$
by Fact \ref{p*q} 
since distinct $p_i \in P_C$ can not be initial segments of one another. Therefore $vL(\Gamma)$ is a free $\mathbb{F}[x,x^{-1}]$-module with basis $\{p^* \> \vert \> p \in P_C \}$.
\end{proof}\\

The proposition below will be used in the proof of Theorem \ref{26} but it should also be of independent interest. 

\begin{proposition} \label{basit}
  The following are equivalent for an $L(\Gamma)$-module $M$.\\
  (i) $M$ is simple and projective.\\
  (ii) $M\cong vL(\Gamma)$ where $v$ is a sink or a leak.\\
  (iii) $M$ is simple and its support $V_{_M}$ contains a sink or a leak.\\
  (iv) $M$ is finitely generated  projective and $End^{L(\Gamma)}(M) \cong \mathbb{F}$.\\
If $M$ satisfies any of these equivalent conditions then $V_{_M}$ contains either a unique sink or a unique equivalence class of a leak. We have a one-to-one correspondence between the set of isomorphism classes of simple projective $L(\Gamma)$-modules and the union of the set of sinks and the set of equivalence classes of leaks.
\end{proposition}
\begin{proof}
(ii) $\Rightarrow$ (i) This follows from Lemma \ref{baz} above.\\ 

(i) $\Rightarrow$ (ii) If $M$ is simple and projective then $M$ is finitely generated, hence $M\cong v_1L(\Gamma)\oplus v_2L(\Gamma)\oplus \cdots \oplus v_nL(\Gamma) $ by Theorem \ref{3.2.11}. Since $0\neq v_i=v_i^2 \in v_iL(\Gamma)$ and $M$ is simple there can be only one summand, that is, $M\cong vL(\Gamma)$ for some vertex $v$. If $v$ is not a sink then $vL(\Gamma) \cong \bigoplus_{se=v} te_iL(\Gamma)$ hence $s^{-1}(v)  =\{e \}$. We may replace $v$ with $te$ and continue this process. There are 3 possibilities: 1) The vertex $v$ is  leak and we are done. 2) After finitely many steps we reach a sink $w$ so $M\cong vL(\Gamma) \cong wL(\Gamma)$ and we are done. 3) After finitely many steps we reach a cycle $C$ with no exit, so $M\cong vL(\Gamma) \cong sCL(\Gamma)$. But there is an epimorphism from $sCL(\Gamma)$ to the Chen module of linear combinations of infinite paths tail equivalent to $C^\infty$ \cite{che15} given by left multiplication with $C^\infty$. (The action of $a \in vL(\Gamma)$ on an infinite path $\alpha$ is $a^*\alpha$.) The kernel of this epimorphism contains $0\neq v-C \in  \mathbb{F}\Gamma \subseteq L(\Gamma)$, therefore $sCL(\Gamma)$ is not simple so this case can not happen.\\

(ii) $\Rightarrow$ (iii) If $M\cong vL(\Gamma)$ where $v$ is a sink or a leak then $vL(\Gamma)$ is simple by Lemma \ref{baz} and $v=v^3\in vL(\Gamma)v$. Hence  $ 0\neq v \in Mv$ and  $ v\in V_M$.\\

(iii) $\Rightarrow $ (ii) If $M$ is simple and $v\in V_M$ with $v$ a sink or a leak then there is a nonzero $m \in Mv$. The $L(\Gamma)$-module homomorphism $m  \underline{\>\>} : vL(\Gamma)\longrightarrow M$ given by left multiplication with $m$ has $mv=m \neq 0$ in its image. The homomorphism $m \underline{\>\>} $ is onto since $M$ is simple, $m \underline{\>\>}$ is one-to-one because $vL(\Gamma)$ is simple by Lemma \ref{baz}. Hence  $M \cong vL(\Gamma)$.\\

(ii) $\Rightarrow$ (iv): $vL(\Gamma)$ is generated by $v=v^2$ and it is projective by \cite[Lemma 4(i)]{ko}. Also $End^{L(\Gamma)}(vL(\Gamma))\cong vL(\Gamma)v$ by \cite[Lemma 4(iii)]{ko}. The corner algebra $vL(\Gamma)v$ is generated by $\{ pq^* \> \vert \> sp=v=sq, \> tp=tq \}$. If $v$ is a sink then $p=v=q$, hence $vL(\Gamma)v= \mathbb{F}v \cong \mathbb{F}$.  If $v$ is a leak then $p=q$ because there is a unique path from $v$ to $tp=tq$. By repeated applications of (CK2) $pq^*=v$, hence $vL(\Gamma)v=\mathbb{F}v \cong \mathbb{F}$.  \\

(iv) $\Rightarrow$ (ii) If $M$
is a finitely generated projective $L(\Gamma)$-module then $M $ is isomorphic to a finite direct sum of modules of the type $vL(\Gamma)$ and $v_ZL(\Gamma)$ by Theorem \ref{3.2.11}. But $End^{L(\Gamma)}(vL(\Gamma)) \neq 0\neq End^{L(\Gamma)}(v_ZL(\Gamma))$
since $0 \neq v \in vL(\Gamma)v \cong End^{L(\Gamma)}(vL(\Gamma)) $ and $0 \neq v_Z \in v_ZL(\Gamma)v_Z \cong End^{L(\Gamma)}(v_ZL(\Gamma)) $ by \cite[Lemma 4(iii)]{ko}, hence there can be only one summand because $End (A) \bigoplus End (B)$ is isomorphic to a subalgebra of $End (A\bigoplus B)$ and $dim End^{L(\Gamma)}(vL(\Gamma))=1$.\\

$End^{L(\Gamma)}(v_ZL(\Gamma)) \cong v_ZL(\Gamma)v_Z$ is infinite dimensional for all $v_Z$ since it contains the infinitely many nonzero orthogonal idempotents $\{ ee^* \> \vert \> \> e \in s^{-1}(v) \setminus Z\}$. Hence $M \cong vL(\Gamma)$ for some $v \in V$. \\

If $M\cong vL(\Gamma)$ and there is a path from $v$ to a cycle $C$  then $sCL(\Gamma)$ is a summand of $vL(\Gamma)$ by Fact \ref{iso}(i). But $End^{L(\Gamma)} (sCL(\Gamma))\cong sCL(\Gamma)sC$ contains the infinite, linearly independent set $\{C, \> C^2, \cdots \} \subseteq \mathbb{F}\Gamma \subseteq L(\Gamma)$. Hence there are no cycles in $V_{v \> \leadsto}$, the successors of $v$.\\

If $v$ is not a sink then 
$vL(\Gamma) \cong \bigoplus_{se=v} teL(\Gamma)$ hence $s^{-1}(v)=\{e\}$, a singleton and $vL(\Gamma) \cong teL(\Gamma)$. If $v$ is not a leak then after finitely many steps we reach a sink $w$ with $M\cong wL(\Gamma)$. Thus $M\cong vL(\Gamma)$ with $v$ either a sink or a leak.\\

Now we can establish the one-to-one correspondence between isomorphism classes of simple projective $L(\Gamma)$-modules and the union of the set of sinks and the set of equivalence classes of leaks in $\Gamma$:  If $M\cong vL(\Gamma)$ for some sink $v$ and $w$ is a sink in $V_M$ then there is $pq^* \in L(\Gamma)$ with $sp=v$, $tp=tq$ and $sq=w$. Since $v$ and $w$ are sinks $v=p=tp=tq=q=w$, so $v$
is the unique sink in $V_M$. If $u \in V_M$ is a leak then there is $pq^*$ in $L(\Gamma)$ with $sp=v$, $tp=tq$ and $sq=u$. Now $v=p=tp=tq=q=u$ gives a contradiction because there can not be a path $q$ from a leak to a sink. Hence there is no leak in $V_M$. \\

Similarly, if $M\cong uL(\Gamma)$ with $u$ a leak then there is no sink in $V_M$. If $v$ is another leak in $V_M$ then there is $pq^*$ in $L(\Gamma)$ with $sp=u$, $tp=tq$ and $sq=v$. So the leaks $u$ and $v$ are equivalent by definition. Conversely, if $u$ and $v$ are equivalent leaks then there is a $0 \neq pq^*$ in $uL(\Gamma)$ with $sp=u$, $tp=tq$ and $sq=v$. Hence $v \in V_M$ and the leaks in $M\cong uL(\Gamma)$ constitute a unique equivalence class of leaks.\\

Thus each simple projective $L(\Gamma)$-module $M$ contains a unique sink or a unique equivalence class of leaks in its support $V_M$ and this defines a bijection between isomorphism classes of simple projective $L(\Gamma)$-modules and the union of the set of sinks and the set of equivalence classes of leaks.
\end{proof}\\

A vertex $v$ in $\Gamma$ is called a \textit{line point} if it is either a leak or there is a unique path $p$ with no exit such that $sp=v$ and $tp$ is a sink. That is, $v$ is a line point if and only if $\Gamma_{v \>\leadsto}$ is either a (finite) path or an infinite path.

\begin{corollary}
    The projective $L(\Gamma)$-module $vL(\Gamma)$ is simple if and only if $v$ is a line point.
    \end{corollary}

    \begin{proof}
        If $v$ is  a leak or there is a unique path $p$ with no exit such that  $sp=v$ and $tp$ is a sink then $tpL(\Gamma) \stackrel{p \underline{\>\>}}{\longrightarrow} vL(\Gamma)$ is an isomorphism by (CK1) and (CK2). Hence $vL(\Gamma)$ is simple  in both cases by Proposition \ref{basit}.\\

        If $p$ is a path in $\Gamma$ with $sp=v$ then $tpL(\Gamma)$ is a nonzero summand of $vL(\Gamma)$ by Fact \ref{iso}(i). Therefore, if  $vL(\Gamma)$ is simple then  there is no branch point or a cycle in $\Gamma_{v \>\leadsto}$ as in the proof of Proposition \ref{basit}. Thus $v$ is a line point.        
        \end{proof}

\begin{theorem} \label{fgip}
Let $\Gamma$ be a digraph and   $P$ a fgip module  of $L(\Gamma)$. If $P$ is not simple then $P\cong vL(\Gamma)$  where $v\in V$ is on a cycle with no exit. We have a   one-to-one correspondence between the cycles in $\Gamma$ with no exit and the isomorphism classes of non-simple fgip $L(\Gamma)$-modules, which maps a cycle $C$ to the isomorphism class of $vL(\Gamma)$ where $v$ is a vertex on $C$. In the reverse direction, given a non-simple fgip $P$ the corresponding cycle is the unique cycle with no exit in the support subgraph $\Gamma_P$.   

\end{theorem}

\begin{proof}  
By Theorem \ref{3.2.11}, $P$ is isomorphic to a finite direct sum of modules of the type $vL(\Gamma)$ and $v_ZL(\Gamma)$ where 
$v_Z:=v-\sum_{e\in Z} ee^*$ with $Z$ a finite subset of the infinite set $s^{-1}(v)$. Since $P$ is indecomposable and each $v$ or $v_Z$ is nonzero by Proposition \ref{nonzero}(i), (ii),  there is only one summand. if $s^{-1}(v)$      is infinite and $e \in s^{-1}(v) \setminus Z$ then $v_ZL(\Gamma) \cong v_YL(\Gamma) \oplus ee^*L(\Gamma)$ where $Y=Z \cup \{e\}$. Hence $P\cong vL(\Gamma)$ and $s^{-1}(v)$ is finite since $v_Z=v$ when $Z = \emptyset$.
If $q$ is a path with $sq=v$ then $tqL(\Gamma)$ is isomorphic to a summand of $vL(\Gamma)$ by Fact \ref{iso}(i), hence $vL(\Gamma) \cong tqL(\Gamma)$. The vertex $tq$ is not a branch vertex by Fact \ref{iso}(iii) because $P$ is the indecomposable. Also $tq$ is not a sink or a leak by Lemma \ref{baz}(iii) since $P$ is not simple. Therefore there is a unique infinite path starting at $v$ and ending at a cycle $C$ with no exit and $P\cong uL(\Gamma)$ where $u$ is a vertex on $C$. \\

Conversely, let $v$ be on a cycle $C$ with no exit and let $M$ be a nonzero summand of $vL(\Gamma)$ with section $s: vL(\Gamma) \longrightarrow M$, so the composition $M \hookrightarrow vL(\Gamma) \stackrel{s}{\longrightarrow} M$ is $id_M$. By Lemma \ref{baz}(iv) if $0 \neq m \in M$ then $m= \sum_{i=1}^n \alpha_ip_i^*$ where $0 \neq \alpha_i \in  vL(\Gamma)v$ and $p_i \in P_C$ for $i=1, 2, \cdots , n$. Note that if $1 \leq i \neq j \leq n$ then $p_i$ is not an initial segment of $p_j$ by the definition of $P_C$. Hence $mp_j= \alpha_j \in Mv$. Therefore $Mv \neq 0$. Applying the functor $Hom(vL(\Gamma), \underline{\>\>})$ to  $M \hookrightarrow vL(\Gamma) \stackrel{s}{\longrightarrow} M$ we get that the $vL(\Gamma)v$-module $Mv$ is a summand of $vL(\Gamma)v \cong \mathbb{F}[x,x^{-1}]$ by \cite[Lemma 4(ii), (iii)]{ko}. Since $vL(\Gamma)v \cong \mathbb{F}[x,x^{-1}]$ is indecomposable as an $\mathbb{F}[x,x^{-1}]$-module, we see that $M=vL(\Gamma)$, that is, $vL(\Gamma)$ is indecomposable.\\

If $e$ is an arrow on the cycle $C$ with no exit then $teL(\Gamma) \stackrel{e\underline{\>\>}}{\longrightarrow} seL(\Gamma)$ is an isomorphism by Fact \ref{iso}(iii) since $s^{-1}(se)=\{e\}$. Hence the isomorphism type of $vL(\Gamma)$ depends only on the cycle $C$ with no exit that $v$ is on. If $u$ is on a cycle $C$ with no exit and $v$ is on a different cycle with no exit then $Hom(vL(\Gamma), \> uL(\Gamma)) \cong uL(\Gamma)v$ by \cite[Lemma 4(iv)]{ko}. Also $uL(\Gamma)v=0$ by Lemma \ref{baz}(iv) since there is no $p \in P_C$ with $sp=v$ because $v$ is on a cycle with no exit. Therefore distinct cycles with no exits corresponds to distinct isomorphism classes of fgips.\\

If $P \cong vL(\Gamma)$ where $v$ on a cycle $C$ with no exit then the support $V_P$ of $P$ is $V_{\leadsto C}$ by Lemma \ref{baz}(iv) and $C$ is the unique cycle with no exit in $\Gamma_P =\Gamma_{\leadsto C} $. Thus assigning the isomorphism class of the fgip $vL(\Gamma)$ to a cycle $C$ with no exit where $v$ is an arbitrary vertex on $C$ defines a bijection. The inverse bijection maps the isomorphism class of a  fgip $P$ to the unique cycle with no exit in its support subgraph $\Gamma_P$.
 \end{proof}\\

Now we can describe the stratification and the parametrization of the ideal space of $L(\Gamma)$ algebraically, independent of the digraph $\Gamma$. 

\begin{theorem} \label{cebirsel} 
    There is a one-to-one correspondence between the non-graded ideals of $L(\Gamma)$ and triples $(X, \beta, \theta)$ where $X$ is a closed submonoid of $\mathcal{V}(L(\Gamma))$, $\beta \neq \emptyset$ is a subset of non-simple fgip modules in $\mathfrak{M}^X$ and $\theta$ is a function from $\beta$ to  polynomials $f(x)\in \mathbb{F}[x]$ of positive degree with $f(0)=1$. If $J$ is an ideal of $L(\Gamma)$ then $X={^\perp J}$ and $\mathfrak{M}^X \cong Mod_{L(\Gamma)/I}$ where $I$ is the intersection of the ideals in $X^\perp$. The isomorphism class $[P] \in \mathcal{V}(L(\Gamma)/I)$ is in $\beta$ if and only if $P$ is a non-simple fgip $L(\Gamma)/I$-module with 
$Hom(P, L(\Gamma)/J)$  nonzero and finite dimensional. The polynomial $\theta([P])$ is the unique $f(x) \in \mathbb{F}[x]$ with $f(0) =1$ generating the annihilator of the $\mathbb{F}[x,x^{-1}]\cong End(P)$-module $Hom(P, L(\Gamma)/J)$. 
\end{theorem}

\begin{proof}
The stratum $(X, \beta)$ corresponds to the stratum $(I, \beta)$ of Theorem \ref{Ideal} via the one-to-one correspondence between graded ideals  and closed submoniods of $\mathcal{V}(L(\Gamma))$ in Theorem \ref{Gal} where $X={^\perp I}$. Corollary \ref{15} shows that $\mathfrak{M}^X \cong Mod_{L(\Gamma/I)}$. By Proposition \ref{basit} and Theorem \ref{fgip}, non-simple fgip $L(\Gamma/I)$-modules are in one-to-one correspondence with the cycles in $\Gamma/I$ with no exit where the fgip $P=vL(\Gamma/I)$ with $v$ a vertex on the cycle $C$. Also $vL(\Gamma/I)v\cong End(vL(\Gamma/I)) \cong \mathbb{F}[x,x^{-1}]$ by \cite[Lemma 4(iii)]{ko} and Lemma \ref{2.2}(ii) above. Moreover, $Hom( vL(\Gamma/I), L(\Gamma)/J) \cong (L(\Gamma)/J)v$ as $End(vL(\Gamma/I)) \cong \mathbb{F}[x,x^{-1}]$-modules by \cite[Fact 2]{ko} where $vL(\Gamma/I)v$ acts on $L(\Gamma/I)v$ by right multiplication. The canonical polynomial generator $f_C(C)$ of the ideal $J$ corresponds to the unique $f_C(x) \in \mathbb{F}[x]$ with $f_C(0)=1$ generating the annihilator of the $\mathbb{F}[x,x^{-1}]\cong End(P)$-module $Hom(P, L(\Gamma)/J)$ since $x \leftrightarrow C$ in the isomorphism between $\mathbb{F}[x,x^{-1}]$ and $vL(\Gamma/I)v$. \end{proof}\\

A module in a Serre subcategory is finitely generated or simple or indecomposable if and only if it is so in ambient category. However, projective modules in $\mathfrak{M}^X \cong Mod_{L(\Gamma)/I}$ are not necessarily projective as $L(\Gamma)$-modules. That is why we needed the Morita invariant Serre subcategries of $\mathfrak{M}^X$ and their fgips to describe stratification and the parametrization of the ideal space of $L(\Gamma)$.

\begin{remark}
   The strata $(X,\beta)$ can be defined for an arbitrary algebra $A$ with $X= {^\perp J}$ where $I$ and $\beta$ are defined as in Theorem \ref{cebirsel}. However, the parametrization $\theta$ of the stratum $(X,\beta)$ requires that $End(P) \cong \mathbb{F}[x,x^{-1}]$ when $P$ is a non-simple fgip $A/I$-module. Moreover, to establish the one-to-one correspondence between the ideals of $A$ and the triples $(X, \beta, \theta)$ we need a characterization of ideals as in Theorem \ref{Ideal}. 
\end{remark}

\subsection{Quotients of Leavitt Path Algebras}

When the digraph $\Gamma$ is row-finite, using the reduction algorithm \cite[Theorem 4.1]{ko2} an \textit{exclusive cycle} (i.e., a cycle disjoint from other cycles) can be replaced by a loop without changing the Morita type of the Leavitt path algebra. The lemma below shows that in an arbitrary digraph $\Gamma$ cycles with no exit can be replaced with  loops to obtain a digraph $\Lambda$ so that $L(\Gamma)\cong L(\Lambda)$. 
If $\{ \> C_i=e_iq_i \}$ where $e_i \in E$ is a collection of distinct geometric cycles with no exit in $\Gamma$ then  $\Lambda$ is defined to be the digraph obtained from $\Gamma$ by cutting each $C_i$ at $te_i$ and reattaching $te_i$ to $se_i$, that is, replacing each $e_i$ with a loop $e_i'$ where $se_i'=se_i=te_i'$. So $\Lambda$ has the same set of vertices and essentially the same set of arrows as $\Gamma$.

\begin{example} The representatives of the unique geometric cycle in the digraph $\Gamma$ are $efg$,  $\> fge$ and $gef$.
   $$ \xymatrix{ &&{\bullet} \ar[d]\\
   \Gamma: \qquad  \qquad {\bullet} \ar[r] &{\bullet}  \ar[r]^{f} & {\bullet} \ar[d]
^{g}\\
&& \> \> {\bullet} \ar[ul]^{e} }$$

\noindent
We obtain the 3 possible digraphs $\Lambda$ given below:
$$ \xymatrix{ &&{\bullet} \ar[d]\\
   {\bullet} \ar[r] &{\bullet}  \ar[r]^{f} & {\bullet} \ar[d]^{g}\\
&& \> \> {\bullet}\ar@(ru,rd)^{e'} } \quad 
 \xymatrix{ &&{\bullet} \ar[d]\\
   {\bullet} \ar[r] &{\bullet}\ar@(ur,ul)_{f'}   & {\bullet} 
   \ar[d]
^{g}\\
&& \> \> {\bullet} \ar[ul]^{e} } \qquad 
\xymatrix{ &&{\bullet} \ar[d]\\
   {\bullet} \ar[r] &{\bullet}  \ar[r]^{f} & {\bullet} \ar@(ru,rd)^{g'}
\\
&& \> \> {\bullet} \ar[ul]^{e} }\\
$$

Note that the isomorphism type of the digraph $\Lambda $ depends on the choice of the initial arrow of the cycle.
\end{example}

\begin{lemma} \label{loop}
   If the digraphs $\Gamma$ and $\Lambda$ are as above then $L(\Gamma)$ is isomorphic to $L(\Lambda )$ as $*$-algebras.
\end{lemma}

\begin{proof}
    The isomorphism $\varphi :L(\Gamma)\longrightarrow  L(\Lambda)$ is the $*$-homomorphism defined by $\varphi(v) :=v$ for all $v\in V$ and $\varphi (e):=e$ for all $ e \in E$ unless $e=e_i$ for some $i$, and $\varphi( e_i):=e_i'q_i^* $ for all $i\in I$. Its inverse  $\psi :L(\Lambda)\longrightarrow  L(\Gamma)$ is the $*$-homomorphism defined by $\psi(v) :=v$ for all $v\in V$ and $\psi (e):=e$ for all $ e \in E$ unless $e=e_i'$ for some $i$, and $\psi( e_i'):=C_i$ for all $i \in I$.\\

    Both $\varphi$ and $\psi$ are identity on the vertices, so the relations (V) are satisfied. Each arrow $e$ maps to a path $p$
in the doubled digraph  such that  $se=sp$ and $te=tp$, so the relations (E) are satisfied. Under both $\varphi$ and $\psi$ the image of an arrow is not an initial segment of the image of another arrow, so $\varphi( e^*f) =0$ and $\psi(e^*f)=0$ if $e \neq f$ in either $\Gamma$ or $\Lambda$ by Fact \ref{p*q}. Also $q_iq_i^*=sq_i$ in $L(\Lambda)$ by (CK2) because $q_i$ has no exit. Hence $\varphi (e_i^*e_i)= q_ie_i'^*e_i' q_i^*=q_ite_i' q_i^*=q_itq_i  q_i^* =q_iq_i^* = sq_i= \varphi (sq_i)=\varphi(te_i) $ and the relations (CK1) are satisfied. The relations (CK2) need to be checked only at the vertices $se_i=sC_i=se_i'$ for $i \in I$, for both $\varphi$ and $\psi$. Since $C_i$ has no exit $\psi(e_i'e_i'^*)=C_iC_i^*=sC_i$ and $\varphi(e_ie_i^*)= e_i'q_i^*q_ie_i'^*=e_i'e_i'^*=se_i'$. 
\end{proof}\\

The following lemmas enable us to eliminate certain types of finitely generated projective $L(\Gamma)$-modules to achieve the classification of those with finite dimensional endomorphism algebras in Theorem \ref{26}.

\begin{lemma} \label{branch}
If the successor subgraph $\Gamma_{v \> \leadsto}$ of the vertex $v$ contains infinitely many branch vertices but neither cycles nor  infinite emitters then we can find paths $\{p_i\}_{i\in \mathbb{N}}$ in $\Gamma $ such that $sp_0=v$ and $tp_i=sp_{i+1}$ being distinct branch vertices for all $i\in \mathbb{N}$. 
\end{lemma}
\begin{proof}
If $v$ is not a branch vertex then there is a unique path $p$ with $sp=v$ and $tp$ is the branch vertex nearest to $v$ among the successors of $v$. Let $p_0=p$. Note that $\Gamma_{tp_0 \> \leadsto}$ contains exactly the same branch vertices as $\Gamma_{v \> \leadsto}$  hence infinitely many.\\\\
If $v$ is a branch vertex 
then there is an $e \in s^{-1}(v)$ with $V_{te\> \leadsto}$ containing infinitely many branch vertices 
since $V_{v\> \leadsto}= \{v \} \cup \bigcup_{se=v} V_{te \> \leadsto}$. If $te$ is a branch vertex then let $p_0=e$. If $te$ is not a branch vertex then there is a unique path $q$ with $sq=te$ and $tq$ is the branch vertex nearest to $te$ among the successors of $te$. In this case, let $p_0:=eq$. In both cases $\Gamma_{tp_0 \> \leadsto}$ contains infinitely many branch vertices but neither cycles nor infinite emitters since $\Gamma_{tp_0 \> \leadsto}$ is a subgraph of $\Gamma_{v \> \leadsto}$.\\

Replacing $v$ with $tp_0$ we can find $p_1$ as above. We can inductively define $p_{i+1}$ from $p_i$ with $\Gamma_{tp_i \> \leadsto}$ containing infinitely many branch vertices for $i \in \mathbb{N}$. Since $\Gamma_{v \> \leadsto}$ is acyclic and each $p_i$ has positive length, $tp_i$ are distinct branch vertices for all $i \in \mathbb{N}$.
\end{proof}

\begin{theorem} \label{26}
 If $P$ is a finitely generated projective $L(\Gamma)$-module then the following are equivalent:\\
 \noindent
 (i) The endomorphism algebra $End^{L(\Gamma)}(P)$ is finite dimensional;\\
 (ii) $P\cong \bigoplus_{i=1}^n v_iL(\Gamma)$ where each $v_i$ is a sink or a leak;\\
 (iii) $End^{L(\Gamma)}(P)$ is isomorphic to a finite direct sum of matrix algebras over $\mathbb{F}$. 
 \end{theorem} 
\begin{proof} $(i) \Rightarrow (ii):$ By Theorem \ref{3.2.11}, $P$ is isomorphic to a finite direct sum of modules of the type $vL(\Gamma)$ and $v_ZL(\Gamma)$ where 
$v_Z:=v-\sum_{e\in Z} ee^*$ with $Z$ a finite subset of the infinite set $s^{-1}(v)$.
 If $End^{L_{\mathbb{F}}(\Gamma)}(P)$ is finite dimensional then $End^{L(\Gamma)}(Q)$ is also finite dimensional for each summand $Q$ of $P$ since $End(Q)$ is isomorphic to a summand of $End(P)$. Using this fact we restrict the possibilities for these  summands. \\
 
As in the proof of $(iv) \Rightarrow (ii)$ of Proposition \ref{basit},  $End^{L(\Gamma)}(v_ZL(\Gamma)) \cong v_ZL(\Gamma))v_Z$ is infinite dimensional for all $v_Z$ since $v_ZL(\Gamma)v_Z$ contains the infinitely many nonzero (by Proposition \ref{nonzero}(i)) orthogonal idempotents $\{ ee^* \> \vert \> \> e \in s^{-1}(v) \setminus Z\}$ which are linearly independent. Hence $P$ has no summand of the form $v_ZL(\Gamma)$. \\

If $vL(\Gamma)$ is a summand of $P$ then $\Gamma_{v\> \leadsto}$ can not contain an infinite emitter $u$:  Otherwise  $End^{L(\Gamma)}(vL(\Gamma)) \cong vL(\Gamma)v$ would be infinite dimensional since  $vL(\Gamma)v$ contains $\{pee^*p^* \> \vert \>\> se=u \}$, a set of nonzero orthogonal idempotents where $p$ is a path with $sp=v$ and $tp=u$. \\

If there is a cycle $C$ in $\Gamma_{v\> \leadsto}$ then $End^{L(\Gamma)}(vL(\Gamma))$ is infinite dimensional: $sCL(\Gamma)sC \cong End^{L(\Gamma)}(sCL(\Gamma))$ contains the infinite linearly independent set $\{C^n\}_{n=1}^\infty \subset \mathbb{F}\Gamma \subset L(\Gamma)$ and $End^{L(\Gamma)}(sCL(\Gamma))$ is a summand of $End^{L(\Gamma)}(vL(\Gamma))$ by Fact \ref{iso}(i). Hence $\Gamma_{v\> \leadsto}$ has no cycle for any summand $vL(\Gamma)$ of $P$.\\

If $\Gamma_{v \> \leadsto}$ is acyclic and contains infinitely many branch vertices but neither cycles nor  infinite emitters then we can find paths $\{p_i\}_{i\in \mathbb{N}}$ in $\Gamma $ such that $sp_0=v$ and $tp_i=sp_{i+1}$ being distinct branch vertices for all $i\in \mathbb{N}$ by Lemma \ref{branch}. We can find an arrow $e_i$ with $se_i=tp_i$ but $e_i$ is not the initial arrow $p_{i+1}$ since $tp_i$ is a branch vertex.
Let $q_i=p_0p_1\cdots p_ie_i$. Now $\{q_iq_i^*\}_{i\in \mathbb{N}} $ is an infinite set of nonzero orthogonal idempotents in $vL(\Gamma)v \cong End^{L(\Gamma)}(vL(\Gamma))$, hence $End^{L(\Gamma)}(vL(\Gamma))$ is infinite dimensional.\\

For each $u$ in $\Gamma_{v\> \leadsto}$ we have the nonzero orthogonal (by Fact \ref{p*q} since $\Gamma_{v \>\leadsto }$ is acyclic) idempotents $\{pp^*  \> \vert \> p \in Path (\Gamma), \> \>  sp=v, \>\> tp=u \}$ in $vL(\Gamma)v\cong End^{L(\Gamma)}(vL(\Gamma))$. Hence there are only finitely many paths $p$ from $v$ to $u$.\\

Therefore $P$ is a finite direct sum where each summand is isomorphic to a $vL(\Gamma)$ with $\Gamma_{v\>\leadsto}$ acyclic, containing no infinite emitters and only finitely many branch vertices.  For each  such vertex $v$, we will show that $vL(\Gamma) \cong \bigoplus_{i=1}^n v_iL(\Gamma)$ where each $v_i$ is a sink or a leak. \\

If $v$ is a leak then we are done. If $v$ is not a leak and $\Gamma_{v\> \leadsto}$ has no branch vertex then there is a unique maximal path $p$ with $sp=v$ and $tp=w$ a sink. We get $v=pp^*$ by applying (CK2) repeatedly. Hence $wL(\Gamma) \stackrel{p \underline{\>\>}}{\longrightarrow} vL(\Gamma)$ is an isomorphism by Fact \ref{iso}. Hence $vL(\Gamma)\cong wL(\Gamma)$ and we are done. The only remaining case is that of $\Gamma_{v\>\leadsto}$ containing a  branch vertex.\\

Let $K$ be the maximum of the lengths of the paths $p$ with $sp=v$ and $tp$ a branch vertex. $vL(\Gamma)$ is isomorphic to $\bigoplus_{se=v} teL(\Gamma)$ by Fact \ref{iso}(iii). 
If $\Gamma_{te \>\leadsto }$ contains a branch vertex then we replace $teL(\Gamma)$ with $\bigoplus_{sf=te} tfL(\Gamma)$ and repeat. This process stops after $K+1$ steps when we obtain $vL(\Gamma) \cong \bigoplus_{i=1}^n v_iL(\Gamma)$ with each $v_i$ either a leak or there is a unique maximal path $p$ with $sp=v_i$ and $tp=w_i$ a sink. As above $v_iL(\Gamma) \cong w_iL(\Gamma)$ and we are done.\\

$(ii) \Rightarrow (iii):$ $End^{L(\Gamma)}(vL(\Gamma)) \cong vL(\Gamma)v$ by \cite[Lemma 4(iii)]{ko}. If $v$ is a sink or a leak then $vL(\Gamma)v\cong \mathbb{F}$ by Lemma \ref{2.2}(i). If $P\cong \bigoplus_{i=1}^n v_iL(\Gamma)$ with each $v_i$ a sink or a leak then  
$End^{L(\Gamma)}(\bigoplus_{i=1}^n v_iL(\Gamma)) \cong 
\bigoplus M_{k_j}(\mathbb{F})$ where the sum is over distinct sinks or inequivalent leaks $v_j$ and $k_j$ is the multiplicity of $v_jL(\Gamma)$ in $P$ (by \cite[Lemma 4(iv)]{ko} $vL(\Gamma)w=0$ if $v$ and $w$ are distinct sinks or inequivalent leaks). \\

$(iii) \Rightarrow (i):$ Since each matrix algebra is finite dimensional, a finite direct sum of matrix algebras is again finite dimensional. 
\end{proof}\\

 We are now ready to prove our main result giving a necessary and sufficient condition for the quotient $L(\Gamma)/J$ to be isomorphic to a Leavitt path algebra, in terms of the generators of $J$ related to the cycles in $\Gamma$. Let's recall from Theorem \ref{Ranga} above that there are 3 possibilities for the ideal $J \cap \mathbb{F}[C]$ of the subalgebra $\mathbb{F}[C] \cong \mathbb{F}[x]$ of $L(\Gamma)$: \\
 (1)  $J\cap \mathbb{F}[C] =\mathbb{F}[C]$ if and only if $sC \in J$, hence $C$ does not contribute to the set of canonical polynomial generators, the vertices on $C$ are in $J\cap V$.\\
(2) If $J\cap \mathbb{F}[C] =0$ then $C$ does not contribute to the set of canonical polynomial generators either. \\
(3) If $J\cap \mathbb{F}[C] $ is a nonzero proper ideal of $\mathbb{F}[C] \cong \mathbb{F}[x]$ then 
 $f_C(C)=C^n+\lambda_{n-1}C^{n-1} + \cdots + \lambda_0sC$ with $\lambda_0 \neq 0$ 
 is a generator of the ideal $J\cap \mathbb{F}[C]$ of $\mathbb{F}[C]$ and $f_C(C)$ is a 
 canonical polynomial generator of $J$. Such cycles have no exit in $\Gamma /J$.\\

We will call an ideal $J$ of $L(\Gamma)$ {\bf dlf} (for distinct linear factors) if each $f_C(x) \in \mathbb{F}[x]$ is a product of distinct linear factors for all canonical polynomial generators $f_C(C)$ of $J$. An ideal is graded if and only if it has no canonical polynomial generator, hence it is vacuously dlf. While being a dlf ideal seems to depend on the digraph $\Gamma$, the following theorem implies that it depends only on the algebraic structure of $L(\Gamma)$.   

\begin{theorem} \label{ana}
The quotient 
 $L(\Gamma)/J$ of a Leavitt path algebra 
 $L(\Gamma)$ is isomorphic to a Leavitt path algebra (over $\mathbb{F}$) if and only if $J$ is a dlf ideal of 
$L(\Gamma)$. 
\end{theorem}

\begin{proof} 
First we will prove the special case of the theorem when each $C$ with a canonical polynomial generator $f_C(C)$ of $J$ 
is a loop (with no exit) and the ideal 
$J$ contains no vertices (hence no elements of the form   $v^{J\cap V}$ either). Then we will reduce the general case to this. \\

When $L(\Gamma)/J \cong L(\Lambda)$ for some digraph $\Lambda$, consider the finitely generated projective $L(\Gamma)/J$-module $P:=(v_C+J)(L(\Gamma)/J)$ where $v_C=sC$. Hence $End(P) \cong (v_C+J)(L(\Gamma)/J)(v_C+J) \cong \mathbb{F}[x,x^{-1}] / (f_C(x))$  by \cite[Lemma 4(iii)]{ko}  and Lemma \ref{2.2}(ii). In particular $End(P)$ is finite dimensional. Hence $End(P)$ is isomorphic to a finite direct sum of matrix algebras over $\mathbb{F}$ by Theorem \ref{26}. Since $End(P)$ is commutative, all these matrix algebras are 1 dimensional, that is, $End(P)\cong \mathbb{F}^{n_C}$ as algebras where $n_C=deg(f_C)$. Each $f_C(x)$ is a product of distinct linear factors in $\mathbb{F}[x]$ by \cite[Fact 9]{ko}.\\

Conversely, assume that each 
 polynomial $f_C(x)$ has distinct roots $r_{Cj}\in \mathbb{F}$ for $j=1,2,\cdots, deg(f_C)$ where  $\{f_C(C)\}$ are the canonical polynomial generators of the ideal $J$. None of the $r_{C j}$ is $0$  because $f_C(0)\neq 0$. We construct the digraph $\Lambda$ by deleting  each loop $C$, replacing $v_C=sC$ with vertices $v_{Cj}$ where $j=1,2,\cdots , deg(f_C)$, 
 and replacing each arrow $e$ with $te=v_C$ by arrows $e_j$ such that $se_j=se$ and $te_j=v_{C j}$. The 
 homomorphism $\varphi : L(\Gamma) \longrightarrow L(\Lambda)$ maps each $v_C$ to $\sum_{j} v_{C j}$, each $e$ (respectively $e^*$) with $te=v_C$ to $\sum_j e_j$ (respectively $\sum_j e_j^*$), the arrow $C$ to 
 $\sum_j r_{Cj} \> v_{C j}$ and $C^*$ to $\sum_j \dfrac{v_{C j}}{r_{C j}}$. Finally $\varphi$ maps all the remaining vertices, arrows and dual arrows to themselves. It's routine to check that the relations are satisfied, so $\varphi$ is a homomorphism.
 \\

All vertices $v_{C k}$ in $\Lambda$ are in the image of $\varphi$ since $\varphi \big(\prod_{j\neq k} (C-r_{C j}v_{C } )\big)= \big(\prod_{j\neq k} (r_{C k}-r_{C j})\big)v_{Ck}$ and the roots $\{r_{Cj}\}$ of $f_C(x)$ are distinct. 
% so $r_{ik}-r_{ij}\neq 0$.
Moreover, $\varphi (e) \> v_{C k}=e_k$ and $v_{ C k}\> \varphi(e^*)=e_k^*$ for all $e$ in $\Lambda$ with $te=v_C$. Hence $\varphi$ is onto. Vertices in $\Gamma$ are sent to (sums of) vertices in $\Lambda$ and elements of the form $v_Z$ in $L(\Gamma)$ are sent to elements of this form in $L(\Lambda)$ under $\varphi$. Such elements are nonzero in $L(\Lambda)$ by Proposition \ref{nonzero}, hence $Ker \varphi$ does not contain elements of this form. \\

If $J \cap \mathbb{F}[C]=0 $ then $C$ remains a cycle in $\Lambda$, hence the restriction of $\varphi$ to the subalgebra $\mathbb{F}[C]$ is one-to-one (since $\mathbb{F}[C] \cong \mathbb{F}[x]$ with $C \leftrightarrow x$,  
in both $L(\Gamma)$ and $L(\Lambda)$). If $J \cap \mathbb{F}[C]\neq 0$ then $f_C(C)$ is a canonical polynomial generator (since $J$ contains no vertices, so $J \cap \mathbb{F}[C] \neq \mathbb{F}[C]$) and the restriction of $\varphi$ to $\mathbb{F}[C]$ is the composition 
$$\mathbb{F}[C] \cong \mathbb{F}[x] \longrightarrow \mathbb{F}^n \cong \bigoplus_{j} \mathbb{F}v_{C j} \hookrightarrow L(\Lambda)$$
where $x \mapsto (r_{Cj})$ under $\mathbb{F}[x] \longrightarrow \mathbb{F}^n$. Hence $Ker \varphi \cap \mathbb{F}[C]$ is generated by $f_C(C)$ for each such $C$. By Theorem \ref{Ranga}, $Ker \varphi =J$ and thus $L(\Gamma)/J \cong L(\Lambda)$. \\

To reduce the general case to the special case proven above, given an ideal $I$ of $L(\Gamma)$ first we divide $L (\Gamma)$ by the largest graded ideal $I'$ contained in $I$ which is generated by the vertices and elements of the form $v^H$ in $I$ where $H=I \cap V$. The quotient $L(\Gamma)/I'$ is isomorphic to a Leavitt path algebra $L(\Gamma')$ for some $\Gamma'$ by Theorem \ref{5.7}(ii) and  $I/I'$ corresponds to the ideal $J'$ of $L(\Gamma')$ which contains neither a vertex nor elements of the form $v^H$. The cycles $C$ corresponding the canonical polynomial generators and the canonical polynomial generators $f_C(C)$  themselves remain essentially the same for $\Gamma'$ and $J'$. Next we 
construct $\Gamma''$ using Lemma \ref{loop} and 
replace the cycles $C$ with the loops $e'$ in $\Gamma''$. Now $J'$ corresponds to the ideal $J$ generated by $\{ f_C(e') \}$. The ideal $J$ contains neither vertices nor elements of the form $v^H$. We are done when we replace $\Gamma$ with $\Gamma''$ and $I$ with $J$.
\end{proof}\\

If $J$ is a dlf ideal of $L(\Gamma)$ then $\Gamma \sslash J$ will denote the digraph $\Lambda$ constructed in the proof of Theorem \ref{ana} satisfying $L(\Gamma)/ J \cong L(\Lambda)$. If $\Gamma$ is row finite  and $J$ is an ideal of $L(\Gamma)$ then digraph $\Gamma / J$ is obtained from $\Gamma$ by deleting the vertices in J from $\Gamma$. If  $\Gamma$ has infinite emitters then $\Gamma /J$ involves breaking vertices as described just below Theorem \ref{5.7}. $\Gamma\sslash J$ is obtained from $\Gamma / J$ by replacing each cycle $C_i$ corresponding to a canonical generator $f_i(C_i)$ of $J$ with a loop $e_i$ as in Lemma \ref{loop} and then  replacing $e_i$ with $deg (f_i)$ number of sinks and each arrow ending at $se_i$  with $deg(f_i)$ arrows each ending at a distinct sink replacing $e_i$. Note that the digraph $\Gamma \sslash J$ depends only on $\{deg f_i\}$, not on the actual canonical polynomial generators $\{f_i\}$. So we can define the digraph $\Gamma \sslash J$ for any ideal $J$ as a function of the triple $(I, \beta, d)$, equivalently the quadruple $(H,S,\beta,d)$. However, the isomorphism 
between $L(\Gamma)/J$ and $L(\Gamma\sslash J)$ of Theorem \ref{ana}  exists only when $J$ is a dlf ideal. Also, there is a dlf ideal in $\mathcal{J}(I, \beta, d)$ if and only if $d(C) <  \vert \mathbb{F} \vert$ for all $C \in \beta$.\\

Next we want to show that an arbitrary quotient of a Leavitt path algebra by its nilradical (or its Jacobson radical) we get a Leavitt path algebra if the field of the coefficients is large enough. This result was prompted by a question of  G. Corti$\tilde{n}$as during a talk by the first named author. 
\\

There are two obstructions to being a dlf ideal for an ideal $J$ of $L_{\mathbb{F}}(\Gamma)$.  The first is that the coefficient field $\mathbb{F}$ may not contain the roots of all $f_C(x)$ where $f_C(C)$ are the canonical polynomial generators of $J$. This may be overcome by extending the field of coefficients to an extension field $\mathbb{E}$ of $\mathbb{F}$ which contains all the roots of all $f_C(x)$, for instance the algebraic closure $\overline{\mathbb{F}}$ of $\mathbb{F}$, since $\mathbb{E} \otimes_{\mathbb{F}} L_{\mathbb{F}}(\Gamma) \cong L_{\mathbb{E}}(\Gamma)$. The second obstruction is that some $f_C(x)$ may have  repeated roots. This obstruction manifests itself as the nilradical of the quotient $L(\Gamma)/J$.\\ 

Let $g_C(x)$ be the product of the distinct irreducible factors of $f_C(x)$ with $g_C(0)=1$ and let  
$J'$ be the ideal generated by $J$ and all $g_C(x)$. We have a canonical epimorphism from $L(\Gamma)/J$ to $L(\Gamma)/ J'$ and the image of $g_C(C)+J$ is a nilpotent element of $L(\Gamma)/ J$.

\begin{lemma} \label{radical}
The kernel of the epimorphism from $L(\Gamma)/J$
  to $L(\Gamma)/J'$ consists of nilpotent elements.   
\end{lemma}

\begin{proof}
  %we show that every element of the kernel is nilpotent.  
  Replacing $\Gamma$ with $\Gamma /J$ we may assume that the cycles $C$ contributing canonical polynomial generators have no exit. An arbitrary element $a \in L(\Gamma)$ is of the form $\sum \lambda_i p_iq_i^*$ with $p_i, \> q_i \in Path (\Gamma)$, $tp_i=tq_i$ and $\lambda_i \in \mathbb{F}$. If $q_i^*g_C(C)\neq 0$ then $sq_i=sC$ and $tq_i=tp_i$ is on $C$ since $C$ has no exit. If $p$ is the part of $C$ from $tp_i$ to $sC$ then $pp^* = tp_i$ by $(CK1)$ and $(CK2)$. Hence $p_iq_i^*=p_ipp^*q_i^*=p_ipC^{m_i} $ for some integer $m_i$. Here we use the convention that $C^0:= sC$ and $C^{-m}:= (C^*)^m$ when $m$ is a positive integer. If  $$P_C:= \{ p \in Path(\Gamma) \> \vert \> tp=sC \text{ and } p\neq qC \} $$
then $ag_C(C)= \sum \lambda_ip_i' C^{n_i} g_C(C)$ with $p_i' \in P_C$ and $n_i \in \mathbb{Z}$. Similarly for $a, \> b \in L(\Gamma)$ we have $ag_C(C)b= \sum \lambda_jp_j C^{n_j} g_C(C) q_j^*$ with $\lambda_j \in \mathbb{F}, \> \> p_j,\> q_j \in P_C$.\\

Having replaced $\Gamma$ with $\Gamma/J$ we now have $J'$ generated by $\{ g_C(C)\}$. Hence an element of $J'$ is of the form $\sum_{i} \sum_j \lambda_{ij}p_{ij} C_i^{n_{ij}} g_{C_i}(C_i) q_{ij}^*$ where $\{C_i\}$ is a finite subset of the cycles contributing canonical polynomial generators. If $i\neq k$ then $q_{ij}^*p_{kl}=0$ by Fact \ref{p*q}. Also $q_{ij}^*p_{il}\neq 0$ if and only if $q_{ij}=p_{il}$ by Fact \ref{p*q} again since $q_{ij}, \> p_{il} \in P_{C_i}$. If $q_{ij}=p_{il} $ then $$p_{ij}C_i^{n_{ij}}g_{C_i}(C_i)q_{ij}^* p_{il}C_i^{n_{il}}g_{C_i}(C_i)q_{il}^*= p_{ij}C_i^{n_{ij}+n_{ilk}}g_{C_i}(C_i)^2 q_{il}^*.$$
A similar formula holds for the product several such elements. In particular, if $g_{C_i}(C_i)^{r_i} \in J $ for some positive integer $r_i$ then a $r_i$-fold product of such elements is also in $J$. \\

Let $r_i$ be the smallest positive integer such that $g_{C_i}(C_i)^{r_i} \in J$ and let $M$ be the least common multiple of the $\{ r_i \}$. We see that each element in the kernel of the epimorphism from $L(\Gamma)/J$ to $L(\Gamma)/J'$ is nilpotent because  $$\big( \sum_{i} \sum_j \lambda_{ij}p_{ij} C_i^{n_{ij}} g_{C_i}(C_i) q_{ij}^* \big)^M \in J$$ by the discussion above. 
\end{proof}\\

\begin{theorem} \label{jake}
    If $\Gamma$ is an arbitrary digraph, $J$ an ideal of $L_{\mathbb{F}}(\Gamma)$, the field $\mathbb{F}$ contains all the roots of all canonical polynomial generators of $J$ and $J'$ is the ideal defined above
    then the Jacobson radical of $L_{\mathbb{F}}(\Gamma)/J$ equals the nilradical $N:=N(L_{\mathbb{F}}(\Gamma)/J)$ and $$(L_{\mathbb{F}}(\Gamma)/J)/ N \cong L_{\mathbb{F}}(\Gamma \sslash J').$$
    
    \end{theorem}

\begin{proof}
By Lemma \ref{radical} above every element of the kernel of the epimorphism from $L_{\mathbb{F}}(\Gamma)/J$ to $L_{\mathbb{F}}(\Gamma \sslash J')$ is nilpotent, hence the kernel is contained in the nilradical of $L_{\mathbb{F}}(\Gamma)/J$ and the nilradical (of any ring) is contained in the Jacobson radical.\\

Conversely, the image under an epimorphism of the Jacobson radical is the Jacobson radical and the Jacobson radical of a Leavitt path algebra is $0$ (see Remark \ref{nil}). Since $L_{\mathbb{F}}(\Gamma)/J' \cong L_{\mathbb{F}}(\Gamma \sslash J')$ by Theorem \ref{ana}, the Jacobson radical of $L_{\mathbb{F}}(\Gamma)/J$ is contained in the kernel and thus the kernel, the nilradical and the Jacobson radical are all equal to each other.    
\end{proof}\\

Since the Jacobson radical and hence the nilradical of a Leavitt path algebra is zero, a necessary condition for a quotient $L_{\mathbb{F}}(\Gamma)/J$ to be isomorphic to a Leavitt path algebra is that $L_{\mathbb{F}}(\Gamma)/J$ is reduced (that is, its nilradical is zero) or $L_{\mathbb{F}}(\Gamma)/J$ is semiprimitive (that is, its Jacobson radical is zero). This condition is also sufficient when the coefficient field is large enough by Theorem \ref{jake} .

\begin{corollary} \label{primitif}
    When $\mathbb{F}= \overline{\mathbb{F}}$ the quotient $L_{\mathbb{F}}(\Gamma)/J$ is isomorphic to a Leavitt path algebra if and only if $L_{\mathbb{F}}(\Gamma)/J$ is reduced if and only if $L_{\mathbb{F}}(\Gamma)/J$ is semiprimitive.
\end{corollary}

\subsection{Genericity}
When the coefficient field is algebraically closed $L(\Gamma)/J$ is isomorphic to a Leavitt path algebra if and only if the roots of every canonical polynomial generator of $J$ are distinct. This is a generic condition on the ideals of $L(\Gamma)$ in the sense explained in the theorem below.

\begin{theorem}
\label{generic}
    If $\mathbb{F}$ is algebraically closed, $I$ a graded ideal of $L(\Gamma)$ and $d$ a function from the subset $\beta$ of (geometric) cycles with no exit in $\Gamma/I$ to positive integers then the set of dlf ideals in each affine factor of  $\mathcal{J}(I,\beta, d)$ is open and dense with respect to the Zariski topology (and the Euclidean  topology when $\mathbb{F}=\mathbb{C}$). The topology on $\mathcal{J}(I,\beta)$ is generated by the topologies on the subsets $\mathcal{J}(I,\beta, d)$, that is, a subset of $\mathcal{J}(I,\beta)$ is open if and only if its intersection with each $\mathcal{J}(I,\beta, d)$ is open for all $d>0$.    
    Hence a  quotient of a Leavitt path algebra by a \enquote{generic} dlf ideal is isomorphic to a Leavitt path algebra. \end{theorem}

\begin{proof} In Theorem \ref{Ideal} we identified $\mathcal{J}(I,\beta, d)$ with $\prod_{C\in \beta}  \big(\mathbb{F}^{d(C)} \setminus \{0\} \big)$ where $(a_1,a_2,\cdots ,a_{d}) \in  \mathbb{F}^{d}\setminus\{0\} $ corresponds to the polynomial $f(x) = 1+ a_1x + a_2x^2+\cdots  +a_dx^d$ and $\{ f_C(C) \>\vert \> C\in \beta \} $ is the set of canonical polynomial generators of $J \in \mathcal{J}(I,\beta, d)$.  When $\mathbb{F}$ is algebraically closed the subset of the factor $\mathbb{F}^{d}\setminus \{0\}$ corresponding to 
polynomials $f(x)$ of positive degree with $f(0)=1$ and having distinct roots is the complement of the hypersurface defined by the discriminant in degree $d$ (which is the resultant of a general polynomial $f$ of degree $d$ and its derivative $f'$). Since the complement of a hypersurface is open and dense in the Zariski topology (and the Euclidean  topology when $\mathbb{F}=\mathbb{C}$), this is a generic condition. 
By the definition of the topology on $\mathcal{J}(I,\beta)$ the subset of dlf ideals in it is also open and dense. By Theorem \ref{ana} if all canonical polynomial generators of the ideal $J$ have distinct linear factors then the quotient $L(\Gamma)/J$ is isomorphic to a Leavitt path algebra. Hence a  quotient of a Leavitt path algebra by a \enquote{generic} dlf ideal is isomorphic to a Leavitt path algebra. 
 \end{proof}\\

When $\Gamma$ is finite or if $\Gamma$ has a finite number of exclusive cycles then $\mathcal{J}(I,\beta, d)$ is itself an affine variety for every triple $(I,\beta, d)$. When $\beta$ is infinite and  $\mathbb{F}=\mathbb{C}$ we may use a norm like $\lVert  \> \> \rVert_p$ on $\mathcal{J}(I,\beta)$ 
with $1 \leq p\leq \infty$ for an infinite product with a similar norm on each factor corresponding to a cycle $C\in \beta$ and observe that there are arbitrarily small perturbations (with respect to the chosen norm) moving an arbitrary given element of $\mathcal{J}(I,\beta)$ into our generic subset of dlf ideals. Note that for every dlf ideal $J'$ in $\mathcal{J}(I,\beta)$ we have $L(\Gamma)/J' \cong L(\Gamma \sslash J)$. Hence $L(\Gamma)/J$ is the limit of algebras isomorphic to $L(\Gamma \sslash J)$, that are deformations of $L(\Gamma)/J$, for an arbitrary ideal $J$ of $L(\Gamma)$. \\

A curious consequence of Theorem \ref{ana} is the following: 
\begin{corollary}
   \label{epi} Let $\varphi$ be an epimorphism from $L(\Gamma)$ to $L(\Gamma')$.\\
   (i) If $\Gamma'$ has neither a sink nor a leak then $Ker\varphi$ is a graded ideal of $L(\Gamma)$.\\
   (ii) If $\Gamma$ is row-finite, $\Gamma'$ has neither a sink nor a leak and $\varphi(v)\neq 0$ for every vertex $v$ then 
   $\varphi$ is an isomorphism.
    \end{corollary}
    \begin{proof} (i) If $J:=Ker \varphi$ is not graded then $J$ has a canonical polynomial generator and $\Gamma \sslash J$ has a sink as in the proof of Theorem \ref{ana}. Hence $L(\Gamma \sslash J)$ has a simple projective module by Proposition \ref{basit}. Since $L(\Gamma \sslash J) \cong L(\Gamma')$, we get that $L(\Gamma')$  has a simple projective module and $\Gamma'$ has a sink or a leak by Proposition \ref{basit} again. Therefore if $\Gamma'$ has neither a sink nor a leak then $Ker\varphi$ is a graded ideal. \\

    (ii) By part (i), $Ker\varphi$ is a graded ideal. Since $\Gamma$ is row-finite, graded ideals of $L(\Gamma)$ are generated by vertices by Theorem \ref{5.7}(i). Hence $Ker\varphi = 0 $.
    \end{proof}\\
    
  In Corollary \ref{epi} the hypothesis that $\Gamma'$ has no leak is necessary. When $\Gamma'$ has infinitely many vertices,  sinks and leaks are not distinguishable algebraically as illustrated in the example below. 
   
\begin{example} Consider the digraphs $\Gamma$ and $\Lambda$ shown below: 
$$\Lambda : \quad \xymatrix{   
 \cdots  \> \> \bullet_2 \ar@{->}[r]& \bullet_1 \ar@{->}[r] &{\bullet_0}} $$  

 $$\Delta :\quad \xymatrix{ {\bullet_0}
 \ar@{->}[r]   &{\bullet_1}
 \ar@{->}[r]& \bullet_2 \>\>  
 \cdots   } $$

$$\qquad \Gamma : \quad \xymatrix{   
 \cdots  \> \> \bullet_2 \ar@{->}[r]& \bullet_1 \ar@{->}[r] &{\bullet_v}\ar@(dr,ur)_e} $$\\ 
 
All digraphs have vertices indexed by $\mathbb{N}$. The digraph $\Lambda$ has a unique sink but no leak. All vertices of $\Delta$ are (equivalent) leaks and $\Delta$ has no sink. Since there is a unique arrow emitted from each nonsink vertex, (CK1) and (CK2) yield that $L(\Lambda)$ and $L(\Delta)$ are spanned by their paths and dual paths. We have an epimorphism from $L(\Lambda)$ to $M_\mathbb{N}(\mathbb{F})$, the algebra of matrices with rows and columns indexed by $\mathbb{N}$ having finitely many nonzero entries, mapping the path $p_{ij}$ from $i$ to $j$ to the elementary matrix $E_{ij}$ if $i\geq j$, and mapping the dual path $p_{ji}^*$ to $E_{ij}$ if $i<j$. Since the image of this spanning set is linearly independent, 
$L(\Lambda) \cong  M_\mathbb{N}(\mathbb{F})$. Similarly we have an isomorphism from $L(\Delta)$ to 
 $M_\mathbb{N}(\mathbb{F})$ mapping the unique path $p_{ij}$ from $i$ to $j$ with $i\leq j$ to $E_{ij}$ and $p_{ji}^*$ to $E_{ij}$ if $i>j$. Therefore 
 $L(\Lambda) \cong L(\Delta)$. As in Theorem \ref{ana} $L(\Gamma)/ (v-e) \cong  L(\Lambda) \cong L(\Delta)$. So we have an epimorphism from $L(\Gamma)$ to  $L(\Delta)$ and $\Delta$ has no sink, but kernel of this epimorphism is not graded. (If the kernel, that is, $(v-e)$ were graded then $v$ would also be in the kernel, but the ideal $(v-e)$ contains no vertices.) 
 
 \end{example}

\section{Quantum Spaces} \label{4}

Most algebraic properties of Leavitt path algebras turn out to be independent of the coefficient field (such as the Leavitt path algebra being unital or finite dimensional or Noetherian or Artinian or simple  or semisimple \cite{aam17} or having polynomial  or exponential growth \cite[Theorem 5]{aajz12} or having 
nonzero finite dimensional quotient \cite[Theorem 6.5]{ko1} or having the Invariant Basis Number property \cite{km} etc.). However, the quotient of $L(\Gamma)/J$ being isomorphic to a Leavitt path algebra does depend on the field of coefficients as shown in  Examples \ref{Complex} and \ref{ch=2} below. The remaining examples are to illustrate Theorem \ref{ana} and to indicate an intriguing connection with quantum spaces. Below we will not use the abbreviation $L(\Gamma)$ for  $L_{\mathbb{F}}(\Gamma)$ since we will need to consider different coefficient fields.

\begin{example} \label{Complex}
Consider the digraph
 $$\Gamma :\>  \xymatrix{ {\bullet}^{v} \ar@(dr,ur)_e &  } $$ \\
 Then $L_{\mathbb{F}}(\Gamma) \cong \mathbb{F}[x,x^{-1}]$.\\
 
 If $\mathbb{F}=\mathbb{R}$ then $L_{\mathbb{R}}(\Gamma)/(v+e^2)$ is not isomorphic to a Leavitt path algebra since the roots of the canonical polynomial $f(x)=1+x^2$ are not real. However, if we take $\mathbb{F}=\mathbb{C}$ then  $$L_{\mathbb{C}}(\Gamma)/(v+e^2) \cong L_{\mathbb{C}}(\Lambda) \cong \mathbb{C}^2.$$ 
 
 \noindent 
 where $\Lambda =\Gamma \sslash (v+e^2)$ is just two isolated vertices $v_1, \> v_2$.
A related example is the quotient map from $L_{\mathbb{F}}(\Gamma)$ to $L_{\mathbb{F}}(\Lambda)$ where $\Gamma$ is as above but $\Lambda$ is $n$ isolated vertices and the canonical polynomial, up to a scalar factor, is $f(x)=  (x-a_1) (x-a_2) \dots  (x-a_n)$ with $a_i \in \mathbb{F}$ distinct.
    \end{example}

\begin{example} \label{Ek}
$$ \Gamma:  \> \> \xymatrix{ {\bullet}  \ar[d] & {\bullet} \ar[l]\\
{\bullet_v} \ar[r]^e & {\bullet} \ar[u] } \qquad \quad 
 \xymatrix{   \Lambda: &
 \bullet \ar@{->}[r]& \bullet \ar@{->}[r]& \bullet \ar@{->}[r] &{\bullet_v}\ar@(dr,ur)_e\\
 &&&&\bullet\\
 \Gamma \sslash J  : &  
 \bullet \ar@{->}[r]& \bullet \ar@{->}[r] &\ar[ur]{\bullet} \ar[dr]\ar@{->}[r] & \bullet \\
  &&&&\bullet}
 $$

\noindent
In this example $f(x)=(x-\alpha)(x-\beta )(x-\gamma)$ and $J$ is the ideal generated by $(C-\alpha v)(C-\beta v)(C-\gamma v)$ where $\alpha, \beta, \gamma \in \mathbb{F}$ are distinct. We need to go through the intermediate step of turning the 4-cycle $C=eq$ into a loop with a tail, that is, replacing $\Gamma$ with $\Lambda$ using Lemma \ref{loop}. Here $L_{\mathbb{F}}(\Gamma) \cong L_{\mathbb{F}}(\Lambda) \cong M_4(\mathbb{F}[x,x^{-1}])$ and 
$L_{\mathbb{F}}(\Gamma \sslash J) \cong L_{\mathbb{F}} (\Gamma) /J \cong 
M_4(\mathbb{F}) \oplus M_4(\mathbb{F}) \oplus M_4(\mathbb{F})$.

\end{example}

  \begin{example} \label{19}
  \vspace{0.5cm}
$$S_q^2:\quad \xymatrix{ {\bullet}\ar@(ur,ul) \ar[r] \ar[dr]& {\bullet}_{w_1} \\
& \> \> {\bullet}_{w_2}} $$\\

Let $H:=\{w_2\}$ and $J=(w_2)$. $J$ has no canonical polynomial generators since $J$ is a graded ideal.
Then $L_{\mathbb{F}} (S_q^2)/J\cong L_{\mathbb{F}}   (S_q^2 /J)=L_{\mathbb{F}} (D_q^2)$, where \\

$$D_q^2:\quad \xymatrix{ {\bullet}\ar@(ur,ul) \ar[r] & {\bullet}_{w_1}
} $$
\end{example}

\begin{example} \label{ch=2}
\vspace{0.5cm}
$$S_q^5:\quad  \xymatrix{ {\bullet}_{v_1} \ar@(ur,ul)_{C_1} \ar[r]& {\bullet}_{v_2} \ar@(ur,ul)_{C_2} \ar[r]&{\bullet}_{v_3} \ar@(ur,ul)_{C_3}  } $$\\

\noindent
Let $H:=\{v_3\}$ and $f(x)=1-x^2$ and $J=(H, v_2-C_2^2)$.\\
So $f(x)=1-x^2=(1-x)(1+x)$.\\

\noindent
Deleting $H$  we get 

$$ S_q^5/ J \> = \> S_q^3 : \qquad   \xymatrix{ {\bullet}_{v_1} \ar@(ur,ul)_{C_1}\ar[r]& {\bullet}_{v_2} \ar@(ur,ul)_{C_2}}$$

\noindent
If $char(\mathbb{F}) \neq 2$ then 
$L_{\mathbb{F}}(S_q^5) /J\cong L_{\mathbb{F}} (S_q^5 \sslash J)=L_{\mathbb{F}} (S_q^2)$ where

$$S_q^5 \sslash J=
S_q^2 :\quad \xymatrix{ {\bullet}_{v_1} \ar@(ur,ul)_{C_1} \ar[r] \ar[dr]& {\bullet}_{v_{21}} \\
&{\bullet}_{v_{22}}}$$

If $Char(\mathbb{F})=2$ then $L_{\mathbb{F}} (S_q^5) /J$ is not isomorphic to a Leavitt path algebra because the roots of $f$ are not distinct.
\end{example}

\begin{example} \label{21}
Let the digraph $S_q^5$ and the hereditary saturated set $H:=\{v_3\}$ be as above,  but let 
$f(x)=1-x$, so $J=(H, v_2-C_2)$.

$$ S_q^5/J=S_q^3 : \qquad   \xymatrix{ {\bullet}_{v_1} \ar@(ur,ul)_{C_1} \ar[r]& {\bullet}_{v_2} \ar@(ur,ul)_{C_2}}$$\\

\noindent
$L_{\mathbb{F}}(S_q^5)/ J \cong L_{\mathbb{F}} (S_q^5\sslash J)= L_{\mathbb{F}}  (D_q^{2})$ where 
$$S_q^3 \sslash J = D_q^2 :\qquad \xymatrix{ {\bullet}_{v_1} \ar@(ur,ul)_{C_1} \ar[r]& {\bullet}_{v_2} }$$
\end{example}

\vspace{0.2cm}
\begin{example} \label{n} 
Let $\mathbb{F}$ be a field containing the $n$th roots of unity,  $char(\mathbb{F})$ not dividing $n$ and $f(x)=1-x^n$.

$$D_q^{4}:\quad    \xymatrix{ {\bullet} \ar@(ur,ul) \ar[r]& {\bullet}_v \ar@(ur,ul)_{e} \ar@(ur,ul) \ar[r] & {\bullet}_w
} $$

   \noindent
   If $J=(w, v-e^n)$ then 
$$D_q^{4}/J =S_q^3:\quad    \xymatrix{ {\bullet} \ar@(ur,ul) \ar[r]& {\bullet}_v \ar@(ur,ul)_{e} \ar@(ur,ul)} \qquad \qquad \qquad \qquad 
 $$

$$\textit{and } \qquad  D_q^4 \sslash J=S_q^3 \sslash (v-e^n) :\quad \qquad \xymatrix{ &\bullet_{v_1} \\
{\bullet} \ar@(ur,ul) \ar[ur] \ar[dr]& \> \> \vdots\\
&{\bullet}_{v_n}}  \qquad \qquad \qquad \qquad \qquad \qquad   \qquad  $$
   
\end{example}

The nomenclature of the digraphs in the examples above refer to the algebraic analogs of quantum disks and spheres \cite{hs02} whose digraphs are shown below:

$$D_q^{2n}:\quad    \xymatrix{ {\bullet} \ar@(ur,ul)_{C_1} \ar[r]& {\bullet} \ar@(ur,ul)_{C_2} \ar[r]& \> \> \cdots \> \> \ar[r]&{\bullet}\ar@(ur,ul)_{C_n}   \ar@(ur,ul) \ar[r] & {\bullet}
} $$

$$S_q^{2n}:\quad  \xymatrix{ {\bullet} \ar@(ur,ul)_{C_1} \ar[r]& {\bullet} \ar@(ur,ul)_{C_2} \ar[r]& \> \> \cdots \> \> \ar[r]&{\bullet}\ar@(ur,ul)_{C_n}   \ar@(ur,ul) \ar[r] \ar[dr]& {\bullet} \\
&&&& \> \> {\bullet}} $$

$$S_q^{2n-1}:\quad  \xymatrix{ {\bullet} \ar@(ur,ul)_{C_1} \ar[r]& {\bullet} \ar@(ur,ul)_{C_2} \ar[r]& \> \> \cdots \> \> \ar[r]&{\bullet}\ar@(ur,ul)_{C_n}   \ar@(ur,ul) } $$

\vspace{0.3cm}

\noindent
The completion of the Leavitt path algebra $L_{\mathbb{C}}(S_q^{2n-1})$ is isomorphic to the Vaksman–Soibelman (odd dimensional) quantum sphere, a $C^*$-algebra which is a noncommutative deformation of the commutative $C^*$-algebra of complex-valued continuous functions on the sphere $S^{2n-1}$. Similarly,  the completions of $L_{\mathbb{C}}(S_q^{2n})$ and $L_{\mathbb{C}}(D_q^{2n})$ are  
the Hong–Szymanski even dimensional 
quantum sphere and the quantum disk, respectively. These quantum spaces are isomorphic to graph $C^*$-algebras \cite{ra05}, denoted by $C^*(\Gamma)$,  which are completions of Leavitt path algebras with complex coefficients \cite[Corollary 7.6]{t07}. The quantum 2-disk $C^*(D_q^{2})$
is also isomorphic to the celebrated Toeplitz algebra while $L_{\mathbb{F}}  (D_q^2)$ is isomorphic to the Jacobson algebra $\mathbb{F} \langle x,y\rangle / (1-xy)$ for every field $\mathbb{F}$. \\

When $J$ is a graded ideal the map $L_{\mathbb{F}}  (\Gamma) \longrightarrow L_{\mathbb{F}}  (\Gamma)/J\cong L_{\mathbb{F}}  (\Gamma/J)$ is a graded $*$-algebra homomorphism. If the ideal $J$ is not graded then $L_{\mathbb{F}}  (\Gamma)/J \cong L_{\mathbb{F}} (\Gamma \sslash J)$ but this isomorphism can not be graded and usually it is not a $*$-algebra homomorphism when $\lambda^*= \lambda $, that is, when the involutive automorphism on $\mathbb{F}$ is identity. When $\lambda^*= \lambda $ it will be a $*$-algebra isomorphism only if all $f_C$ are $1-x$ or $1+x$ or $1-x^2$, when $char(\mathbb{F}) \neq 2$. In general $L_{\mathbb{F}}  (\Gamma)/J$ is $*$-algebra isomorphic to  $L_{\mathbb{F}}  (\Gamma \sslash J)$
if and only if the roots $\{ r \}$ of $f_C$ satisfy $r r^*=1$ for all canonical polynomial generators $f_C$.\\

 If $\mathbb{F}=\mathbb{C}$ then  $L_{\mathbb{C}}(\Gamma)$ is a dense subalgebra of the graph $C^*$-algebra $C^*(\Gamma)$ by \cite[Corollary 7.6]{t07} and the $*$-algebra structure that $L_{\mathbb{C}}(\Gamma)$ inherits from $C^*(\Gamma)$ is this $*$-algebra structure with 
$\lambda^*= \overline{\lambda}$ for all $\lambda \in \mathbb{C}$. In this case the condition on the roots of each $f_C$ is that they need to be (distinct)  unit complex numbers. \\

The digraphs in the examples above are all finite, hence their Leavitt path algebras are finitely generated. Moreover, the cycles in the digraphs of the examples  are pairwise disjoint. This geometric condition is equivalent to their Leavitt path algebras having finite Gelfand-Kirillov dimension, that is, having polynomial growth \cite[Theorem 5]{aajz12}. The dimensions of the quantum spaces in our examples equal the Gelfand-Kirillov dimensions of the corrresponding Leavitt path algebras \cite[Theorem 25]{ko22}. \\

The construction of a Leavitt path algebra from a digraph can be extended to a cofunctor from digraphs to algebras \cite{ht24}. The definition of a Leavitt path algebra gives the realization of this cofunctor $L_{\mathbb{F}}$ at the level of objects. The source category has digraphs as its objects and \textit{admissible} digraph morphisms as its morphisms. A  digraph morphism $\varphi: \Gamma \longrightarrow \Lambda$ is a function from the union $V_{\Gamma} \sqcup E_{\Gamma}$ of the vertices and arrows of $\Gamma$ to the union $V_{\Lambda} \sqcup E_{\Lambda}$ of the vertices and arrows of $\Lambda$ such that

(i) $\varphi (V_\Gamma) \subseteq V_\Lambda$ and $\varphi (E_\Gamma) \subseteq E_\Lambda$;

(ii) $\varphi(se)=
s\varphi(e) $ and $\varphi(te)=
t\varphi(e) $ for all arrows $e$ in $\Gamma$.\\
A digraph morphism $\varphi$
is \textit{admissible} if  

(i) $\varphi$ is finite-to-one;

(ii) $\varphi^{-1}(e) \stackrel{t}{\longrightarrow}\varphi^{-1}(te)$ is a bijection for all arrows $e$ in $\Lambda$;

(iii) $\varphi$ maps each sink to a sink or an infinite emitter.\\
A finite-to-one digraph morphism has to map an infinite emitter to an infinite emitter. Composition of admissible morphisms is admissible, hence digraphs and admissible digraph morphisms form a category. \\

Since $\varphi$ is finite-to-one we can define $L_{\mathbb{F}}(\varphi) :V_{\Lambda} \sqcup E_{\Lambda} \longrightarrow L_{\mathbb{F}} (\Gamma)$ by mapping each vertex and each arrow to the sum of their preimages. Conditions (ii) and (iii) of admissibility ensure that $L_{\mathbb{F}}(\varphi)$ extends to a graded $*$-algebra homomorphism from $L_{\mathbb{F}}
 (\Lambda)$ to $L_{\mathbb{F}}(\Gamma)$. When $char(\mathbb{F})\neq 2$ or when $\Gamma$ is row-finite, the conditions (ii) and (iii) of admissibility are also necessary for $L_{\mathbb{F}}(\varphi)$ to define an algebra homomorphism. \\

If $I$ is a graded ideal of $L_{\mathbb{F}}  (\Gamma)$ then there is an admissible  digraph morphism $\varphi: \Gamma /I \longrightarrow  \Gamma$ so that $L_{\mathbb{F}}(\varphi):L_{\mathbb{F}} (\Gamma) \longrightarrow L_{\mathbb{F}} (\Gamma/I) \cong L_{\mathbb{F}}   (\Gamma)/I$ is the quotient epimorphism: $\varphi$ is identity on the vertices in $V \setminus H$ as well as the arrows in 
 $\{ e \in E \> \vert \> te \notin H\} $ while 
$\varphi(v')=v$ and $\varphi(e')=e$ in the notation of Theorem \ref{5.7} with $H=I\cap V$ and $I=(H,S^H)$.\\

If $J$ is a non-graded dlf ideal of $L_{\mathbb{F}}  (\Gamma)$ then the epimorphism  $L_{\mathbb{F}}  (\Gamma) \longrightarrow L_{\mathbb{F}}  (\Gamma)/J \cong L_{\mathbb{F}}  (\Gamma \sslash J)$ of Theorem \ref{ana} is not a graded algebra homomorphism, therefore it can not come from an admissible digraph morphism. Also, if the dlf ideal $J$ is not graded then the digraph $\Gamma \sslash J$ has a sink but $\Gamma$ may have no sink or an infinite emitter (as in Examples \ref{Complex}, \ref{Ek}, \ref{ch=2}, \ref{21}), so there is no  admissible digraph morphism from $\Gamma \sslash J$ to $\Gamma$.
\begin{remark} Covariant functoriality of Leavitt path algebras has also been considered \cite{ht24-2}, but it seems less natural with more involved admissibility conditions. This is not so surprising since many fundamental functors (such as Gelfand-Naimark duality or its algebraic version, the spectrum of a ring) connecting geometry and algebra are contravariant. Even those with covariant counterparts like cohomology or K-theory appear more natural and have more structure (for example, products). Of course even more information can be extracted from the interaction of a contravariant functor with its covariant counterpart. 
    \end{remark}

If $\Gamma$ consists of a single loop $C$ and $f(x)$ is an arbitrary nonconstant (monic) polynomial with distinct roots and $J=(f(C))$ then the geometric interpretation of $L_{\mathbb{F}} (\Gamma) \longrightarrow L_{\mathbb{F}} (\Gamma \sslash J)$ is the inclusion of finitely many points into a circle. If $\mathbb{F} = \mathbb{C}$ and  the roots of $f(x)$ are unit complex numbers then $L_{\mathbb{C}}(\Gamma)\cong \mathbb{C}[x,x^{-1}]$ may be identified with trigonometric polynomials and the $*$-homomorphism $L_{\mathbb{C}}(\Gamma) \longrightarrow L_{\mathbb{C}}(\Gamma \sslash J)$ with evaluating a trigonometric polynomial on the roots of $f(x)$. Completing $L_{\mathbb{C}}(\Gamma)$ with respect to the sup norm we get the $C^*$-algebra of complex-valued continuous functions on the circle and the restriction (or the evaluation) map to functions on the finite subset of the points.  \\

More generally when $J$ is a dlf ideal the contravariant geometric interpretation $\Gamma \sslash J \hookrightarrow \Gamma$ of $L_{\mathbb{F}}  (\Gamma) \longrightarrow L_{\mathbb{F}}  (\Gamma)/J \cong L_{\mathbb{F}}  (\Gamma \sslash J)$ seems to be an algebraic quantum cofibration. For example, the epimorphism $L_{\mathbb{F}}   (S_q^3) \longrightarrow L_{\mathbb{F}}  (S_q^2)$ in Example \ref{ch=2} corresponds to the restriction of a function on $S^3$ to the subspace $S^2$ associated with the inclusion of the sphere $S^2$ into the sphere $S^3$ of the classical case. The epimorphism $L_{\mathbb{F}}  (S_q^3) \longrightarrow L_{\mathbb{F}}  (S_q^2)$ can not be realized as 
an $L_{\mathbb{F}}(\varphi)$ for any admissible digraph morphism $\varphi$ 
since $S_q^3$ does not have any sinks or infinite emitters. This discussion carries over verbatim to the epimorphism $L_{\mathbb{F}}  (S_q^{2n+1}) \longrightarrow L_{\mathbb{F}}  (S_q^{2n})$.\\

We think a good category of algebraic quantum spaces whose objects are Leavitt path algebras should include the morphisms $L_{\mathbb{F}}  (\Gamma) \longrightarrow L_{\mathbb{F}} (\Gamma \sslash J)$ with $J$ a dlf ideal, in addition to morphisms coming from admissible digraph morphisms via the cofunctor $L_{\mathbb{F}}$. This would entail allowing a sink to map to a vertex on an exclusive cycle $C$ with some restrictions (such as: no arrow with source on $C$ is in the image of the digraph morphism).\\

The cofunctor $L_{\mathbb{F}}$ seems to be an algebraic quantum analog of Gelfand-Naimark duality between compact Hausdorff  topological spaces and commutative unital $C^*$-algebras. In fact, the cofunctor $L_{\mathbb{C}}$ agrees with the classical Gelfand-Naimark cofunctor (assigning the $C^*$-algebra of complex valued continuous functions to a compact Hausdorff topological space) when restricted to the subcategory of finite discrete topological spaces (which are also finite digraphs with no arrows).\\

When the digraph $\Gamma$ has arrows the appropriate quantum analog would be $C^*(\Gamma)$, the graph $C^*$-algebra of $\Gamma$ \cite{ra05}. The Leavitt path algebra $L_{\mathbb{C}}(\Gamma)$ is a dense subset of $C^*(\Gamma)$ \cite[Corollary 7.6]{t07}  which may be considered a quantum analog of the Stone-Weierstrass Theorem. The Leavitt path algebra $L_{\mathbb{F}} (\Gamma)$ corresponds to the algebra of polynomial functions, for instance, defined on a variety in the classical case while  the morphism induced by the inclusion of a subvariety is the restriction of a function to the subvariety.\\

We also have algebraic quantum analogs of some semi-algebraic sets such as $D_q^{2n}$. For instance, in the topological category we have: 
$$S^1 *     \Omega_n \hookrightarrow  S^1*S^1= S^3 $$

\noindent
where  $\Omega_n \subset S^1$ is the set of $n$th roots of unity in the unit circle $S^1 \subset \mathbb{C}$ and $*$ denotes topological join. Hence $S^1 * \Omega_n$ is $n$ copies of the 2-dimensional disk $D^2$ with their boundary circles identified. Applying the Gelfand-Naimark cofunctor we get the commutative $C^*$-algebras of complex valued continuous functions where the inclusion becomes restriction:   
$$C(S^1 *     \Omega_n )\longleftarrow  C(S^1*S^1)= C(S^3) $$

\noindent
Polynomials in $\{x,y,\bar{x} , \bar{y} \}$ with complex coefficients are dense in $C(S^3)$
by the Stone-Weierstrass Theorem. Here $S^3= \{(x,y) \in \mathbb{C}^2 \> \vert \> x\bar{x} +y\bar{y}=1\}$ is identified with $S^1 *S^1$ via $[z,w,t]\leftrightarrow (\sqrt{1-t}\> z , \sqrt{t} \> w) \in S^3$ where $z,w \in S^1$ and $t\in [0,1]$. The algebraic quantum analog is 
$$ L_{\mathbb{F}}  (S_q^3)/(v-e^n) \cong L_{\mathbb{F}}  (S_q^3 \sslash (v-e^n)) \longleftarrow L_{\mathbb{F}}  ( S_q^3 ) $$ 
in the notation of Example \ref{n}. Completing with respect to an appropriate norm we get 
the quantum analog \cite[Corollary 7.6]{t07}
$$ C^*(S_q^3 \sslash (v-e^n)) \longleftarrow C^*( S_q^3 ) $$ 

\noindent
suggesting that $(S^1 *\Omega_n)_q= S_q^3 \sslash (v-e^n)$.\\

Of course an important aspect of Gelfand-Naimark duality is that it goes both ways: given a unital commutative $C^*$-algebra $A$ we  can find a compact Hausdorff topological space $X$ with $A\cong C(X)$ as $C^*$-algebras. Gelfand-Naimark-Segal theorem states that any $C^*$-algebra is isomorphic to a closed subalgebra of bounded linear transformations on some Hilbert space. Neither the Hilbert space nor the closed subalgebra is unique. However, graph $C^*$-algebras and quantum spaces are rather special, so one may hope for some sort of uniqueness. \\

It is impossible to recover the digraph $\Gamma$ from the Leavitt path algebra $L_{\mathbb{F}} (\Gamma)$, there are many distinct digraphs whose Leavitt path algebras are isomorphic as in Lemma \ref{loop}. However, we may still hope to find a unique canonical representative digraph $\Gamma$ such that $L_{\mathbb{F}} (\Gamma)$ is isomorphic or Morita equivalent to the given Leavitt path algebra. (This is tantamount to classifying Leavitt path algebras up to isomorphism or up to Morita equivalence.) So far this has been accomplished only for Leavitt path algebras with Gelfand-Kirillov dimension $<4$  in \cite{ko22}.\\

\noindent
{\bf Acknowledgement:} 
This study was supported by Scientific and Technological Research Council of Türkiye (TÜBİTAK) under the Grants 124F214 and 122F414. The authors thank TÜBİTAK for their support.

\noindent
$^*$ Department of Mathematics \\ 
Gebze Technical  University,  Gebze, TÜRK\.{I}YE\\ 
E-mail: aytenkoc@gtu.edu.tr\\

\noindent
$^{**}$ Department of Mathematics \\ 
University of Oklahoma, Norman, OK, USA \\
E-mail: mozaydin@ou.edu

  \end{document}

% --- supplement: 2_Appendix.tex ---

\maketitle

\newcommand{\q}[1]{``#1''}

\medskip

\begin{abstract}
\noindent
Relevant facts that are (or should be) well known involving algebras and modules (not specifically  Leavitt path algebras) which are elementary but seem to be difficult to locate in the literature in the form that is needed, are given in this appendix which also contains a novel orthogonality relation between projective modules and ideals. 

\end{abstract}

{\bf Keywords:} Local units, projective modules, ideals, nonstable K-theory 
\medskip

\medskip

\section{Algebras with Local Units}
\medskip

A ring $A$ has \textit{local units} if every finite subset of $A$ is contained in a subring with $1$ (if $1=u \in A$ in this subring then $u=u^2$ and we say that $u$ is a local unit for this subset). Rings with local units are also  called idempotented rings \cite{ku}. Infinite direct sums of rings with 1, category algebras and their quotients (hence path algebras and Leavitt path algebras) have local units. Rings with local units mostly behave like rings with 1, see for instance \cite{1}. \textit{All rings below are assumed to have local units}.\\

 A \textit{unital} right $A$-module $M$ is a right $A$-module satisfying $M = MA$. (Unital left $A$-modules are defined analogously.) Since $A$ has local units $A=A^2$, so 
  $A$ is a unital left and a unital right $A$-module. If $M$ is a unital right $A$-module, $m = \sum m_i a_i $ for some (finitely many) $m_i \in M$, $a_i \in A$ and $u$ a local unit for $\{a_i\}$ then $mu=\sum m_i a_i u=\sum m_i a_i=m$. Hence there is a local unit $u \in A$ for all $m \in M$. If $\{m_i\} \subseteq M$
  and $\{a_j\} \subseteq A$ are finite subsets, $u_i$ is a local unit for $m_i$ and $v$ is a local unit for $\{a_j\}$ then a local unit $u$ for $\{u_i\} \cup \{v\}$ is a local unit for $\{m_i\} \cup \{a_j\}$. \textit{All modules below are assumed to be unital right modules unless specified otherwise}.

  \begin{fact} \label{no1}
      A ring $A$ is finitely generated as an $A$-module if and only if $A$ has $1$.
  \end{fact}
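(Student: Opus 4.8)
The plan is to prove the two implications separately, with the forward direction being immediate and the converse carrying all the content. If $A$ has $1$, then $A = 1\cdot A$, so the single element $1$ generates $A$ as a right $A$-module and $A$ is certainly finitely generated. The work lies in the converse: assuming $A$ is finitely generated as a (unital right) $A$-module, I want to manufacture a two-sided identity out of a suitable local unit.

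First I would pin down what finite generation means concretely here. In a unital right module $M$, every element $m$ satisfies $m = mu$ for a local unit $u$, so $m \in mA$; consequently the submodule generated by a finite set $S$ is exactly $SA$. Applying this to $M = A$, a finite generating set $\{a_1,\dots,a_n\}$ gives $A = \sum_{i=1}^{n} a_i A$. Now I would choose a local unit $u$ for the finite set $\{a_1,\dots,a_n\}$, so that $u a_i = a_i$ for each $i$. For an arbitrary $x \in A$, writing $x = \sum_i a_i b_i$ yields $ux = \sum_i (u a_i) b_i = \sum_i a_i b_i = x$, so $u$ is a \emph{left} identity for $A$.

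The main step — and the point I expect to be the real obstacle — is promoting this one-sided identity to a genuine two-sided $1$. I would do this with a local-unit coincidence: given any $x \in A$, pick a local unit $w$ for the two-element set $\{x, u\}$. Then $uw = w$ since $u$ is a left identity, while $uw = u$ since $w$ is a two-sided local unit for $u$; comparing gives $w = u$. Because $w$ fixes $x$ on both sides, this forces $xu = ux = x$. As $x$ was arbitrary, $u$ is a two-sided identity, so $A$ has $1$. The only subtleties are the two places where the local-unit hypothesis is essential — first to justify the description $A = \sum_i a_i A$ for a ring with no a priori unit, and then to collapse $w$ onto $u$ — and both are dispatched directly by the definition of local units.
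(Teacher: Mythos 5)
Your proposal is correct and follows essentially the same route as the paper: obtain a local unit $u$ for a finite generating set, observe that $u$ is then a left identity, and collapse any local unit covering $u$ onto $u$ itself to conclude that $u$ acts as an identity on the right as well. The only cosmetic difference is that you take a local unit for $\{x,u\}$ directly, whereas the paper takes one for $u$ together with the coefficients expressing $x$ in the generators; both yield the same collapse $w=u$ and the same conclusion.
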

  \begin{proof}
      If $A$ has $1$ then $A$ is generated by $1$ as an $A$-module. When $A$ is generated by $\{ \alpha_i\}_{i=1}^n$ as a right $A$-module, let $u$ be a local unit for $\{ \alpha_i\}_{i=1}^n$. Hence $ua=a$ for all $a \in A$. For $a= \sum \alpha_i a_i \in A$ let $v$ be a local unit for $\{u\} \cup \{ a_i\}_{i=1}^n$. Now $u=uv=v$ and  $ua=a=av=au$ for all $a \in A$. Thus $u=1$.   
      \end{proof}\\

If $\bf{k}$ is a commutative ring with 1 then a $\bf{k}$-\textit{algebra} $A$ is both a ring and a left $\bf{k}$-module satisfying $\lambda(ab) = (\lambda a)b = a(\lambda b)$ for all $\lambda \in {\bf k}$ and for all $a, b \in A$. The classical case is when $\bf{k}$ is a field. A ring is also a $\mathbb{Z}$-algebra. Since $\bf{k}$ is commutative we get a $\bf{(k, k)}$-bimodule structure on A by defining $a \lambda := \lambda a$ for all $\lambda \in {\bf k}$ and for all $a \in A$. If $A$ is a $\bf{k}$-algebra and $a^2=a \in A$ then $aAa$ is a $\bf{k}$-subalgebra with $1=a$.\\

 If $A$ is a $\bf{k}$-algebra and $M$ is an $A$-module then $M$ is a $({\bf k}, \> A)$-bimodule via $\lambda m:= m (\lambda u)$ for all $m \in M$ with $u$ a local unit for $m$. This is well-defined: if $m=mu=mv$ for local units $u, v$ and $w$ is a local unit for $\{u, v\}$ then $m (\lambda u)= m(\lambda uw)=m(u \lambda w)=mu(\lambda w)= mv(\lambda w)=m(\lambda vw)=m(\lambda v)$. For all $\lambda \in {\bf k}$, $m\in M$, $a \in A$ and $u$ a local unit  for $m$ and $a$ (hence for $ma$ also) we have $\lambda (ma)=(ma)(\lambda u)=m(a\lambda u)=m(\lambda au)=m(\lambda a)=
m(\lambda ua)=(\lambda m) a$.\\

 If $A$ is a ${\bf k}$-algebra 
then an ideal $I$ of the ring $A$ inherits a ${\bf k}$-module structure from $A$ which is compatible with the ${\bf k}$-module structure defined above since $\lambda a = \lambda (au) = a (\lambda u)$ for all $\lambda \in {\bf k}$, for all $a \in A$ and $u$ a local unit for $a$. Ideals of $A$ are unital left and right $A$-modules since $IA=I=AI$. If $M$ and $N$ are $A$-modules then $Hom (M, \> N)$ is the $\bf{k}$-module of $A$-module homomorphisms where $(\lambda f)(m):=f(\lambda m)$ for all $\lambda \in \bf{k}$, $f \in Hom(M, N)$ and $m \in M$. Since $M$ is a $({\bf k}, \> A)$-bimodule $\lambda f$ is an $A$-module homomorphism. In particular $End(M):=Hom(M, M)$ is a $\bf{k}$-algebra. 
 
 \begin{fact} \label{rEnd}
     $Hom(M, N)$ is  a right $End(M)$-module via $(f \theta)(m):=f(\theta (m))$ for all $f \in Hom(M, N), \> \theta \in End(M)$ and $m \in M$. \end{fact}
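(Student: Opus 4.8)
The plan is to verify directly the axioms defining a right $End(M)$-module, working throughout by evaluating at an arbitrary $m \in M$, since every operation involved (addition of homomorphisms, the ${\bf k}$-action, and composition) is defined pointwise. Before checking any axiom I would first confirm that the prescribed action lands in $Hom(M,N)$: for $f \in Hom(M,N)$ and $\theta \in End(M)$, the map $f\theta$ given by $(f\theta)(m)=f(\theta(m))$ is additive, being a composite of additive maps, and it is $A$-linear since $(f\theta)(ma)=f(\theta(ma))=f(\theta(m)a)=f(\theta(m))a=(f\theta)(m)a$, using $A$-linearity of $\theta$ and then of $f$. Hence $f\theta$ is again an $A$-module homomorphism, so the operation $Hom(M,N)\times End(M)\to Hom(M,N)$ is well defined.

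Next I would record the purely formal axioms, each following by pointwise evaluation. Right distributivity $(f+g)\theta=f\theta+g\theta$ is immediate from the definition; left distributivity $f(\theta+\psi)=f\theta+f\psi$ uses additivity of $f$; associativity $(f\theta)\psi=f(\theta\psi)$ reduces to associativity of composition, as both sides send $m$ to $f(\theta(\psi(m)))$ (here $\theta\psi$ is the product in $End(M)$, i.e. the composite $\theta\circ\psi$); and unitality $f\cdot\mathrm{id}_M=f$ is clear, so $\mathrm{id}_M$ serves as a global unit and $Hom(M,N)$ is unital in the sense fixed earlier. It then remains to check compatibility with the ${\bf k}$-action, namely $\lambda(f\theta)=(\lambda f)\theta=f(\lambda\theta)$ for all $\lambda \in {\bf k}$.

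The one non-formal step, which I expect to be the crux, is the identity $\lambda(f\theta)=(\lambda f)\theta$. Evaluating, $(\lambda(f\theta))(m)=(f\theta)(\lambda m)=f(\theta(\lambda m))$, while $((\lambda f)\theta)(m)=(\lambda f)(\theta(m))=f(\lambda\,\theta(m))$, so equality requires $\theta(\lambda m)=\lambda\,\theta(m)$, that is, that every $A$-module endomorphism $\theta$ is automatically ${\bf k}$-linear. This is precisely where the $({\bf k},A)$-bimodule structure on $M$ enters: if $u$ is a local unit for $m$, then $\theta(m)u=\theta(mu)=\theta(m)$, so $u$ is also a local unit for $\theta(m)$, whence $\theta(\lambda m)=\theta(m(\lambda u))=\theta(m)(\lambda u)=\lambda\,\theta(m)$. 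The remaining relation $\lambda(f\theta)=f(\lambda\theta)$ needs no such input, since both sides send $m$ to $f(\theta(\lambda m))$ straight from the definitions of the ${\bf k}$-action on $End(M)=Hom(M,M)$ and on $Hom(M,N)$. Assembling these verifications yields the asserted right $End(M)$-module structure.
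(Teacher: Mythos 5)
Your proof is correct. The paper states this Fact without proof, treating it as routine, so there is no argument to compare against; your direct verification of the module axioms is the natural one, and you correctly isolate the only non-formal point, namely that every $A$-module endomorphism $\theta$ is automatically ${\bf k}$-linear, which you establish properly from the local-unit definition of the $({\bf k},A)$-bimodule structure by noting that a local unit for $m$ is also a local unit for $\theta(m)$.
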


\textit{All algebras below are $\bf{k}$-algebras with $\bf{k}$ fixed but not specified}.

\begin{fact}\label{summand}
    If $M$ is a summand of the $A$-module $N$ then $End(M)$ is isomorphic to a subalgebra of $End(N)$.
\end{fact}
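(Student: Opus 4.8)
The plan is to exploit the splitting maps that come with a summand decomposition in order to transport endomorphisms of $M$ into endomorphisms of $N$. Since $M$ is a summand of $N$, write $N = M \oplus M'$ for some submodule $M'$, and let $\iota \colon M \to N$ be the inclusion and $\pi \colon N \to M$ the projection along $M'$. These are $A$-module homomorphisms satisfying the single identity $\pi\iota = \mathrm{id}_M$ that will do essentially all of the work; note also that $e := \iota\pi$ is an idempotent in $End(N)$, since $e^2 = \iota(\pi\iota)\pi = \iota\pi = e$.

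First I would define $\Phi \colon End(M) \to End(N)$ by $\Phi(\theta) = \iota\theta\pi$. Each $\Phi(\theta)$ is a composite of $A$-module homomorphisms and so lies in $End(N)$. Then I would check that $\Phi$ is a homomorphism of ${\bf k}$-algebras: additivity and ${\bf k}$-linearity are immediate from the bilinearity of composition and the definition of the ${\bf k}$-action on $Hom$, while multiplicativity uses $\pi\iota = \mathrm{id}_M$ via $\Phi(\theta_1)\Phi(\theta_2) = \iota\theta_1(\pi\iota)\theta_2\pi = \iota\theta_1\theta_2\pi = \Phi(\theta_1\theta_2)$.

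Next I would verify injectivity. If $\Phi(\theta) = \iota\theta\pi = 0$, then sandwiching with $\pi$ on the left and $\iota$ on the right and using $\pi\iota = \mathrm{id}_M$ twice gives $\theta = (\pi\iota)\theta(\pi\iota) = \pi\Phi(\theta)\iota = 0$. Hence $\Phi$ is an injective ${\bf k}$-algebra homomorphism, and $End(M)$ is isomorphic to its image $\Phi(End(M))$, a ${\bf k}$-subalgebra of $End(N)$. In fact one can identify this image with $e\,End(N)\,e$ for $e = \iota\pi$, matching the earlier observation that $eAe$ is a subalgebra with unit $e$ whenever $e^2 = e$.

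The only point requiring care — more a caveat than an obstacle — is that $\Phi$ does not send $\mathrm{id}_M$ to $\mathrm{id}_N$; it sends it to the idempotent $e = \iota\pi$, so the embedding fails to be unital when $M' \neq 0$. This is harmless in the present setting: a subalgebra is not required to share the ambient unit, and $e$ serves as the unit of the image $e\,End(N)\,e$. Once the splitting $\pi\iota = \mathrm{id}_M$ is in hand, every remaining step is a routine verification.
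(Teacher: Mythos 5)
Your proof is correct and takes essentially the same approach as the paper: the map $\theta \mapsto \iota\theta\pi$ is precisely the paper's ``extend by $0$ on a complement of $M$,'' which you have simply spelled out in full detail (including the injectivity check and the harmless non-unitality caveat).
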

\begin{proof}
Every endomorphism of $M$ can be extended by $0$ on a  complement of $M$ to get an endomorphism of $N$.
\end{proof}

\section{Projective Modules and Ideals}
The main goal of this section is to describe a Galois connection between the closed submonoids of $\mathcal{V}(A)$ defined below, and the lattice of ideals of $A$, defined via an orthogonality relation between projective $A$-modules and the ideals of $A$.  

\begin{lemma} \label{End}
If $M$ is an $A$-module for a $\bf{k}$-algebra $A$ and $a^2=a \in A$ then:   \\ 
(i) $aA$ is a finitely generated unital projective $A$-module.\\
(ii) $Hom(aA, \> M)\cong Ma=\{m \in M \> \vert \> ma=m\}$ as $aAa$-modules.\\
(iii) $End(aA) \cong aAa = \{ x\in A \> \vert \> axa=x\}$ as $\bf{k}$-algebras. \\
(iv) If $b^2=b \in A$ then $Hom(aA,\> bA) \cong bAa$ as $aAa$-modules.
\end{lemma}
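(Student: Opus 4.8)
The plan is to treat parts (ii)--(iv) as a single package built on evaluation at $a$, since (iii) is the case $M=aA$ and (iv) is the case $M=bA$ of the isomorphism in (ii), and then to isolate the one genuinely new ingredient, namely the multiplicativity check in (iii). Part (i) is logically separate, so I would dispatch it first.

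For (i), note that $a=a^2\in aA$ already generates $aA$ as a right $A$-module, giving finite generation; unitality follows from $aA=(aA)A$ (equivalently, a local unit for $x$ is a local unit for $ax$). For projectivity I would exhibit $aA$ as a direct summand of the regular module. Left multiplication $L_a\colon A\to A$, $x\mapsto ax$, is a right $A$-module endomorphism because $L_a(xb)=(ax)b=L_a(x)b$, and it is idempotent since $a^2=a$ forces $L_a^2=L_a$. Hence $A=\operatorname{im}L_a\oplus\ker L_a=aA\oplus\{x\in A\mid ax=0\}$ as right $A$-modules, so $aA$ is a direct summand of the free module $A_A$ and therefore projective.

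For (ii) I would first record the set identity $Ma=\{m\in M\mid ma=m\}$: if $m=m'a$ then $ma=m'a^2=m'a=m$, and conversely $m=ma\in Ma$. Then I define $\Phi\colon \operatorname{Hom}(aA,M)\to Ma$ by $\Phi(f)=f(a)$, which lands in $Ma$ because $a=a\cdot a$ gives $f(a)=f(a)a$; the inverse sends $m\in Ma$ to the map $f_m(z)=mz$, which is right $A$-linear and takes values in $M$. Checking $\Phi$ and $m\mapsto f_m$ are mutually inverse uses only $z=az$ for $z\in aA$ and $m=ma$ for $m\in Ma$, and both maps are visibly $\mathbf{k}$-linear. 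For the $aAa$-module claim, $\operatorname{Hom}(aA,M)$ is a right $\operatorname{End}(aA)$-module by Fact~\ref{rEnd}, hence a right $aAa$-module through the identification in (iii), while $Ma$ is a right $aAa$-module by right multiplication in $A$ (one checks $Ma\cdot aAa\subseteq Ma$). Compatibility $\Phi(f\cdot t)=\Phi(f)\,t$ then reduces to the identity $f(ta)=f(a)\,t$ for $t\in aAa$, which follows from $ta=t$ and $t=at$.

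For (iii) I would specialize (ii) to $M=aA$, using $(aA)a=aAa=\{x\in A\mid axa=x\}$, to get a $\mathbf{k}$-linear bijection $\Psi\colon\operatorname{End}(aA)\to aAa$, $\theta\mapsto\theta(a)$, sending $\mathrm{id}_{aA}$ to $a=1_{aAa}$. The only remaining point, and the step I expect to be the main obstacle, is showing $\Psi$ is an \emph{algebra} homomorphism rather than an anti-homomorphism: with the composition convention $(\theta\phi)(z)=\theta(\phi(z))$, I compute $\Psi(\theta\phi)=\theta(\phi(a))$ and rewrite $\phi(a)=a\cdot\phi(a)$ (valid since $\phi(a)\in aA$), so right $A$-linearity yields $\theta(a\cdot\phi(a))=\theta(a)\,\phi(a)=\Psi(\theta)\Psi(\phi)$. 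Getting these composition and right-action conventions to line up, together with the pervasive bookkeeping forced by the absence of a global $1$, is the delicate part; once it is settled, (iv) is immediate by taking $M=bA$ in (ii), since $(bA)a=bAa$ and the $aAa$-action transports to right multiplication.
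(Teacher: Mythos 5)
Your treatment of (ii)--(iv) is essentially the paper's: the evaluation map $f\mapsto f(a)$ with inverse $m\mapsto(z\mapsto mz)$, the identity $ta=at$ for $t\in aAa$ to get $aAa$-equivariance, and the specializations $M=aA$ and $M=bA$ for (iii) and (iv). Your multiplicativity check in (iii) is correct and is the same computation the paper records as $(b\underline{\>\>}\>) \circ (c\underline{\>\>}\> ) = (bc) \underline{\>\>}$. One cosmetic point: define the $aAa$-action on $Hom(aA,M)$ via the map $aAa\to End(aA)$, $r\mapsto(x\mapsto rx)$ (equivalently, $(fr)(x):=f(rx)$ as the paper does) \emph{before} knowing that map is bijective, so that (ii) does not formally lean on (iii).

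The genuine gap is in (i). The decomposition $A=aA\oplus\{x\in A\mid ax=0\}$ is correct, but the conclusion ``$aA$ is a direct summand of the free module $A_A$ and therefore projective'' presupposes that $A_A$ is itself projective. For a ring with $1$ that is immediate ($A$ is free on $\{1\}$, and one lifts the image of $1$); here it is precisely the kind of claim the local-units hypothesis calls into question --- the paper warns immediately after this lemma that free modules behave pathologically without a unit. Without a global identity, $A_A$ is only a directed union of the corners $uA$, and the natural route to its projectivity (when it holds) is to decompose $A$ as a direct sum of modules $eA$ --- which uses the projectivity of the corners as an input, so your reduction is circular at best and unjustified as written. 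The repair is cheap and already latent in your part (ii): $Hom(aA,M)\cong Ma$ naturally in $M$, and right multiplication by $a$ carries an epimorphism $g\colon M\to N$ of unital modules onto a surjection $Ma\to Na$ (given $n=na\in Na$, choose $m$ with $g(m)=n$; then $g(ma)=na=n$), so $Hom(aA,-)$ preserves epimorphisms and $aA$ is projective. That is exactly the paper's argument: lift $f(a)$ to some $m\in M$ and set $h(x)=mx$.
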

\begin{proof}
(i) $A = A^2$ so $(aA)A=aA$, hence  $aA$ is a unital $A$-module and $aA$ is generated by $\{a\}$, hence it is finitely generated. If $f:aA \longrightarrow N$ and $g: M  \longrightarrow N$ are homomorphisms with $g$ onto then there is an $m\in M$ with $f(a)=g(m)$. Let $h=m \underline{\>\>}: aA \longrightarrow M$ be the homomorphism given by left multiplication with $m$. We get $f=g\circ h$ because $g(h(x))=g(mx)=g(m)x=f(a)x=f(ax)=f(x)$ for all $x \in aA$. Hence $aA$ is projective. \\

\noindent
(ii) The isomorphism from $Hom(aA,\>  M)$ to  $Ma=\{m \in M \> \vert \> ma=m\}$ is $f \mapsto f(a)=f(a^2)=f(a)a$  and its inverse is $m \mapsto m\underline{\>\>}$ since $f(x)=f(ax)=f(a)x$ for all $x \in aA$. We define  $fr$ for all $f\in Hom(aA,M)$ and $r \in aAa$ as $(fr)(x):=f(r x)$ for all   $x \in aA$ and realize $Hom(aA,\> M)$ as an $aAa$-module. The isomorphism given above is an $aAa$ module isomorphism since $(f r )(a)=f(r a)=f(ar ) = f(a)r$ because $ra = ar$ for all $r \in aAa$.\\

\noindent
(iii) This follows from (ii) and the fact that 
$(b\underline{\>\>}\>) \circ (c\underline{\>\>}\> ) = (bc) \underline{\>\>}$ with $b, c \in aAa$. \\

\noindent
(iv) This follows from (ii) with  $M=bA$.
\end{proof}\\

When $A$ has $1$, every finitely generated $A$-module is a quotient of a finitely generated free $A$-module. But non-zero free $A$-modules are never finitely generated if $A$ does not have $1$ by Fact \ref{no1}. However, finitely generated $A$-modules are still quotients of finitely generated projective $A$-modules.

\begin{fact}
    If the $A$-module $M$ is generated by $\{m_i\}_{i=1}^n$ then $M$ is a quotient of $(uA)^n$  
    where $u$ is a local unit for $\{m_i\}_{i=1}^n$. 
\end{fact}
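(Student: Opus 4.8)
The plan is to exhibit an explicit surjective $A$-module homomorphism from $(uA)^n$ onto $M$, so that $M$ is isomorphic to a quotient of $(uA)^n$. First I would record the structural facts that make the statement meaningful: since $u$ is a local unit for $\{m_i\}_{i=1}^n$ we have $u^2 = u$, so by Lemma \ref{End}(i) each copy of $uA$ is a finitely generated unital projective $A$-module, and hence the finite direct sum $(uA)^n$ is again finitely generated, unital and projective. This is the natural replacement for a finitely generated free module in the absence of a global $1$, as explained just before the statement.

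Next I would define $\varphi : (uA)^n \to M$ by $\varphi(x_1,\dots,x_n) := \sum_{i=1}^n m_i x_i$, which lands in $M$ since each $x_i \in uA \subseteq A$ and $M$ is a right $A$-module. This map is clearly additive, and it respects the right $A$-action because $\varphi\big((x_1,\dots,x_n)a\big) = \sum_{i=1}^n m_i (x_i a) = \big(\sum_{i=1}^n m_i x_i\big) a$, so $\varphi$ is an $A$-module homomorphism.

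The only real point is surjectivity. Since $M$ is unital and generated by $\{m_i\}_{i=1}^n$, the generated submodule is $\sum_{i=1}^n m_i A$, so every $m \in M$ can be written as $m = \sum_{i=1}^n m_i a_i$ with $a_i \in A$. These coefficients $a_i$ need not lie in $uA$, and this is where I would use that $u$ is a local unit for each $m_i$, i.e.\ $m_i u = m_i$. Rewriting $m_i a_i = (m_i u) a_i = m_i (u a_i)$ and setting $x_i := u a_i \in uA$, we obtain $m = \varphi(x_1,\dots,x_n)$, so $\varphi$ is onto.

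Therefore $M \cong (uA)^n / \ker \varphi$, exhibiting $M$ as a quotient of the finitely generated projective module $(uA)^n$. I expect the surjectivity step to be the only place requiring care, precisely because one must produce coefficients inside $uA$ rather than in all of $A$; the local-unit identity $m_i = m_i u$ is exactly what converts an arbitrary coefficient $a_i \in A$ into an admissible coefficient $u a_i \in uA$ without changing the value $m_i a_i$.
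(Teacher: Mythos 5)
Your proof is correct and is essentially the paper's own argument: the paper also takes the map $\bigoplus_{i=1}^n (m_i\,\underline{\;\;})$ from $(uA)^n$ onto $M$, which is exactly your $\varphi$. You simply spell out the surjectivity check (replacing $a_i$ by $ua_i \in uA$ via $m_iu = m_i$) that the paper leaves implicit.
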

\begin{proof}
    The epimorphism from $(uA)^n$ onto $M$ is $\bigoplus_{i=1}^n (m_i \> \underline{\>\>}\> )$
\end{proof}\\

We say that a projective $A$-module $P$ is {\bf orthogonal} to an ideal $I$ of $A$, denoted $P\perp I$, 
when $Hom(P, A/I)=0$. Clearly orthogonality depends only on the isomorphism type of $P$, so $[P] \perp I$ is well-defined where $[P]$ denotes the isomorphism class of $P$. There is a pre-order $\twoheadrightarrow$ on isomorphism classes of projective $A$-modules defined as 
$[P]\twoheadrightarrow [Q]$ if there is an epimorphism from $P$ to $Q$, equivalently if $Q$ is isomorphic to a summand of $P$. The set of ideals of $A$ orthogonal to $P$ is denoted by $P^\perp$ or $[P]^\perp$. Similarly, if $X$ is a collection of isomorphism classes of projective $A$-modules then $X^\perp$ denotes the set of ideals of $A$ orthogonal to all $[P] \in X$, that is, $X^\perp =\cap \{ P^\perp \> \vert \> [P] \in X \}$.   The set of isomorphism classes of  projective $A$-modules orthogonal to the ideal $I$ is denoted by $^\perp I$.

\begin{proposition}
     \label{perp}
Let $P$ and $Q$ be projective $A$-modules, $I$ and $J$  ideals of $A$, $X$ a collection of isomorphism classes of projective $A$-modules and $Y$ a collection of ideals of $A$.\\
  (i) If $P \twoheadrightarrow Q$ then $P^\perp \subseteq Q^\perp$. \\
  (ii) If $P\perp I$ and $I \subseteq J$ then $P \perp J$.\\
(iii) $Y \subseteq P^\perp$ if and only if $\> P \perp  \cap I $ where the intersection is over all $I \in Y$.\\
(iv) $X \subseteq  {^\perp} I$ if and only if  $\> \oplus P \perp I$ where the sum is over all $[P]\in X$.\\
(v) $aA \perp I$ if and only if $a \in I$ for every idempotent $a\in A$.\\
(vi) If $\{a_i \> | \> i \in \Lambda\}$ is a set of idempotents in $A$ then  $(\bigoplus_ {i \in \Lambda} a_iA)^\perp$ is the set of ideals in $A$ containing $\{a_i \> | \> i \in \Lambda \}$.
\end{proposition}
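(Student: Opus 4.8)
My plan is to prove the six parts in order, building the later ones on the earlier ones and on Lemma \ref{End}, using three standard features of the Hom functor. First, $Hom(-,N)$ is additive and carries a direct sum $\bigoplus_i P_i$ to the product $\prod_i Hom(P_i, N)$; this handles (i) and (iv). Second, projectivity of $P$ lets one lift any homomorphism $P \to A/J$ through a surjection $A/I \twoheadrightarrow A/J$; this drives (ii). Third, a module map vanishes as soon as its image is trapped inside a family of submodules whose intersection is zero; this powers (iii). For (i), the hypothesis $P \twoheadrightarrow Q$ gives $P \cong Q \oplus Q'$, so $Hom(P, A/I) \cong Hom(Q, A/I) \oplus Hom(Q', A/I)$ and the vanishing of the left side forces $Hom(Q, A/I) = 0$; thus $P^\perp \subseteq Q^\perp$. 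For (ii), the inclusion $I \subseteq J$ yields a canonical surjection $\pi \colon A/I \twoheadrightarrow A/J$; given $f \colon P \to A/J$, projectivity produces a lift $\tilde f \colon P \to A/I$ with $\pi \tilde f = f$, and since $Hom(P, A/I) = 0$ we get $\tilde f = 0$, hence $f = 0$ and $P \perp J$.

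Part (iii) is where a genuinely new idea is needed, and I expect its forward implication to be the main obstacle. Set $K = \bigcap_{I \in Y} I$. The backward implication is immediate from (ii), since $K \subseteq I$ for every $I \in Y$. For the forward implication, suppose $Hom(P, A/I) = 0$ for all $I \in Y$ and let $f \colon P \to A/K$ be arbitrary. For each $I \in Y$ the composite of $f$ with the quotient map $A/K \twoheadrightarrow A/I$ lies in $Hom(P, A/I) = 0$, so $f(P) \subseteq I/K$; intersecting over $I \in Y$ gives $f(P) \subseteq \bigcap_{I}(I/K) = \bigl(\bigcap_I I\bigr)/K = K/K = 0$, whence $f = 0$ and $P \perp K$. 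The point is to recognize that the separate orthogonality conditions confine the image of $f$ to each $I/K$ and that these submodules meet in $0$ exactly because $K$ is the intersection.

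For (iv), $Hom\bigl(\bigoplus_{[P] \in X} P,\, A/I\bigr) \cong \prod_{[P] \in X} Hom(P, A/I)$, which is zero iff every factor is zero; since $\bigoplus P$ is again projective and unital, this is precisely the equivalence of $\bigoplus P \perp I$ with $X \subseteq {}^\perp I$. For (v), Lemma \ref{End}(ii) with $M = A/I$ identifies $Hom(aA, A/I) \cong (A/I)a$, so $aA \perp I$ iff $(A/I)a = 0$; since $a = a^2$ the class $\bar a$ satisfies $\bar a = \bar a\,a \in (A/I)a$, so $(A/I)a = 0$ forces $a \in I$, while conversely $a \in I$ gives $xa \in I$ for all $x \in A$ and hence $(A/I)a = 0$. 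Finally (vi) synthesizes these: by (iv), $\bigoplus_{i \in \Lambda} a_i A \perp I$ iff $a_i A \perp I$ for every $i$, and by (v) this holds iff $a_i \in I$ for every $i$; therefore $\bigl(\bigoplus_{i \in \Lambda} a_i A\bigr)^\perp$ is exactly the set of ideals of $A$ containing $\{a_i \mid i \in \Lambda\}$.
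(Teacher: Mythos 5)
Your proposal is correct and follows essentially the same route as the paper: lifting through $A/I \twoheadrightarrow A/J$ for (ii), additivity of $Hom$ for (i) and (iv), the identification $Hom(aA, A/I) \cong (A/I)a$ from Lemma \ref{End}(ii) for (v) and (vi), and for (iii) the observation that the image of any $P \to A/\cap I$ is trapped in $\bigcap_I (I/\cap I) = 0$, which is just the elementwise form of the paper's monomorphism $A/\cap I \hookrightarrow \prod A/I$. The only cosmetic difference is that in (i) you use the summand characterization of $\twoheadrightarrow$ rather than composing directly with the epimorphism; both are one-line arguments the paper itself records as equivalent.
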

\begin{proof}
(i) Let $g: P \rightarrow Q$ be an epimorphism. If $I \in P^\perp$ and $f \in Hom(Q, \> A/I)$ then the composition $f \circ g: P \rightarrow Q \rightarrow A/I$ is $0$. Since $g$ is onto $f=0$, hence $I \in Q^\perp$.\\\\
%
\noindent
(ii) We have an epimorphism $f:A/I \rightarrow A/J$. If $h\in Hom(P, \> A/J)$ then there is a homomorphism $g: P \rightarrow A/I$ with $h=f\circ g$ since $P$ is projective. But $P \perp I$, hence $g=0$ and so $h=0$. Thus $P \perp J$.\\\\
%
\noindent
(iii) If $\> P \perp  \cap I$ then $P \perp I$ for all $I\in Y$ by (ii) above. Hence $Y\subseteq P^\perp$. Conversely, if $\>Y \subseteq P^\perp$ then $Hom(P, \>  A/I) =0$ for all $I\in Y$. We have a monomorphism $f: A/\cap I \rightarrow \prod A/I$ where the intersection and the product are both over $I\in Y$. If 
$g: P \rightarrow A/\cap I$ then the composition $f\circ g =0$ because $Hom(P, \> \prod A/I)\cong \prod Hom(P, \> I)=0$. Since $f$ is one-to-one, $g=0$. Hence $\> P \perp  \cap I$.\\

\noindent
(iv) Since $Hom(\oplus P , \> A/I) \cong \prod Hom(P, \> A/I)$ where the sum and the product are both over $P \in X$, the claim follows.\\

\noindent
(v) $Hom(aA,A/I) \cong (A/I)a$ by Lemma \ref{End}(ii) and $(A/I)a =0$ if and only if $a \in I$.\\

\noindent
(vi) $Hom(\bigoplus a_iA, \> A/I) \cong \prod Hom(a_iA, \> A/I) \cong \prod (A/I)a_i$ by Lemma \ref{End}(ii) 
where the sum and the product are over $i \in \Lambda$. Also, $(A/I)a_i =0 $ if and only if $a_i \in I$. Hence $Hom(\bigoplus a_iA, \> A/I)=0$ if and only if $\{a_i \> | \> i \in \Lambda \} \subseteq I$.   Thus  $(\bigoplus a_iA)^\perp $ is the set of ideals of $A$ containing $\{a_i \> | \> i \in \Lambda \}$.
\end{proof}\\

A \textit{Galois connection} is a pair $(\varphi, \psi)$ of order reversing maps $\varphi :\mathcal{P} \longrightarrow \mathcal{Q}$ and $\psi: \mathcal{Q} \longrightarrow \mathcal{P}$ between posets $\mathcal{P}$ and $\mathcal{Q}$ satisfying $\psi\varphi(x) \geq x$ for all $x\in \mathcal{P}$ and $\varphi \psi(y) \geq y$ for all $y \in \mathcal{Q}$. Since $\psi \varphi (x) \geq x$ and $\varphi$ is order reversing $\varphi \psi \varphi (x) \leq \varphi (x)$, but $\varphi \psi (\varphi(x)) \geq \varphi(x)$ for all $x \in \mathcal{P}$. Hence $\varphi \psi \varphi = \varphi$ and similarly $\psi \varphi \psi =\psi$. Therefore the restriction of $\varphi$ to $\psi (\mathcal{Q)}$ and the restriction of $\psi$ to $\varphi(\mathcal{P})$ are inverses of each other. These bijections give the \textit{Galois correspondence} between $\varphi(\mathcal{P})$ and $\psi(\mathcal{Q})$. If $\varphi (\mathcal{P})$ or $\psi(\mathcal{Q})$ is a lattice then so is the other and the Galois correspondence is a lattice anti-isomorphism (since $\varphi (\mathcal{P})$ and $\psi(\mathcal{Q})^{op}$, the opposite or the upside down poset of $\psi(\mathcal{Q})$ are poset isomorphic).  \\

The \textit{nonstable K-theory} of a ring $A$ is the additive monoid $\mathcal{V}(A)$ of isomorphism classes of finitely generated projective $A$-modules under direct sum. 
A submonoid $S$ of $\mathcal{V}(A)$ is \textit{closed} if $[P] \in S$ and $[P] \twoheadrightarrow [Q]$ implies that $[Q] \in S$. Arbitrary intersections of closed submonoids are closed submonoids. The closed submonoid generated by $X \subseteq \mathcal{V}(A)$, denoted by $\overline{X}$, is the intersection of all closed submonoids of $\mathcal{V}(A)$ 
containing $X$, that is, the smallest closed submonoid containing $X$.  

\begin{theorem}
    \label{closure}
There is a Galois connection between the subsets of $\mathcal{V}(A)$ under inclusion and the ideals of $A$ under reverse inclusion, given by $\psi(I):= {^\perp I}$ for all $I \trianglelefteq A$ and $\varphi(X):=\cap I$ for all $X\subseteq \mathcal{V}(A)$ where the intersection is over $ I\in X^\perp$. 
Moreover, $\psi(I)$ is a closed submonoid of $\mathcal{V}(A)$ for every $I \trianglelefteq A$.   
    When $P$ is a (not necessarily finitely generated) projective $A$-module let 
$$P_{\twoheadrightarrow} := \{ [Q] \in \mathcal{V}(A) \> \vert \> Q \textit{ is a quotient of  } P^n  \textit{ for some } n>0 \}.$$
If $X=\{ [Q_i] \} _{i \in \Lambda}$ and $P:= \oplus_{i\in \Lambda} Q_i$
then $P_{\twoheadrightarrow} =\overline{X}$ and ${X^\perp = \overline{X}^\perp =P^\perp}$. \end{theorem}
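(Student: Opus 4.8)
The plan is to treat Theorem~\ref{closure} in four stages, leaning on Proposition~\ref{perp} as heavily as possible so that only one genuinely module-theoretic argument remains.

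First I would check that $(\varphi,\psi)$ is a Galois connection by verifying the four defining conditions directly. Order-reversal of $\psi$ is immediate from Proposition~\ref{perp}(ii): $I\subseteq J$ forces ${}^\perp I\subseteq{}^\perp J$. Order-reversal of $\varphi$ follows because $X_1\subseteq X_2$ gives $X_2^\perp\subseteq X_1^\perp$, and intersecting over a larger family of ideals only shrinks the result. For the adjunction inequality $X\subseteq\psi\varphi(X)$, I would observe that if $[Q]\in X$ then every ideal orthogonal to all of $X$ is in particular orthogonal to $Q$, i.e.\ $X^\perp\subseteq Q^\perp$; Proposition~\ref{perp}(iii) then yields $Q\perp\varphi(X)$, so $[Q]\in{}^\perp\varphi(X)=\psi\varphi(X)$. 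For $\varphi\psi(I)\subseteq I$ (the correct direction under reverse inclusion) the key point is the tautology $I\in({}^\perp I)^\perp=\psi(I)^\perp$: every projective module orthogonal to $I$ is, by definition, orthogonal to $I$. Hence $I$ is one of the ideals in the intersection defining $\varphi\psi(I)$, forcing $\varphi\psi(I)\subseteq I$.

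Next I would show that $\psi(I)={}^\perp I$, read as the finitely generated projective classes orthogonal to $I$, is a closed submonoid of $\mathcal{V}(A)$. It contains $[0]$ and is closed under $\oplus$ because $Hom(P\oplus Q,A/I)\cong Hom(P,A/I)\oplus Hom(Q,A/I)$, equivalently by Proposition~\ref{perp}(iv); and it is downward closed under $\twoheadrightarrow$ by Proposition~\ref{perp}(i), since $[P]\twoheadrightarrow[Q]$ gives $P^\perp\subseteq Q^\perp$, so $I\in P^\perp$ implies $I\in Q^\perp$.

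The substantive step is the identity $P_{\twoheadrightarrow}=\overline{X}$. One inclusion is soft: $P_{\twoheadrightarrow}$ is itself a closed submonoid (a quotient of $P^n$ together with a quotient of $P^m$ gives a quotient of $P^{n+m}$; a quotient of a quotient of $P^n$ is again one; and each $Q_i$ is a quotient of $P$), and it contains $X$, so $\overline{X}\subseteq P_{\twoheadrightarrow}$. The reverse inclusion $P_{\twoheadrightarrow}\subseteq\overline{X}$ is the main obstacle, precisely because $P=\oplus_{i\in\Lambda}Q_i$ need not be finitely generated. Given $[Q]\in P_{\twoheadrightarrow}$ with an epimorphism $\pi\colon P^n\twoheadrightarrow Q$, I would use projectivity of $Q$ to split $\pi$ via some $\iota\colon Q\to P^n$ with $\pi\iota=\mathrm{id}_Q$. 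Since $Q$ is finitely generated, its image $\iota(Q)$ lies inside a finite subsum $P'$ of the (possibly infinite) direct sum $P^n$; restricting $\pi$ to $P'$ still splits $\iota$, so $Q$ is a direct summand of $P'$. As $P'$ is a finite direct sum of copies of the $Q_i$, its class lies in $\overline{X}$, and $[P']\twoheadrightarrow[Q]$ then places $[Q]\in\overline{X}$ by closedness. This reduction from an infinite sum to a finite subsum is the crux of the whole theorem.

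Finally, the orthogonality identities fall out quickly. Proposition~\ref{perp}(iv) gives directly that $I\in X^\perp$ iff $\oplus_{[Q_i]\in X}Q_i=P$ is orthogonal to $I$, that is, $X^\perp=P^\perp$. Since $X\subseteq\overline{X}$ we get $\overline{X}^\perp\subseteq X^\perp=P^\perp$, and for the reverse I would note that $P\perp I$ implies $P^n\perp I$ (again by additivity of $Hom$), so that any $[Q]\in\overline{X}=P_{\twoheadrightarrow}$, being a quotient of $P^n$, satisfies $Q\perp I$ by Proposition~\ref{perp}(i). Hence $P^\perp\subseteq\overline{X}^\perp$, completing $X^\perp=\overline{X}^\perp=P^\perp$.
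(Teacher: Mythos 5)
Your proposal is correct and follows essentially the same route as the paper's proof: verify the Galois connection axioms via Proposition~\ref{perp}(ii)--(iii) and the tautology $I\in({}^\perp I)^\perp$, get closedness of $\psi(I)$ from Proposition~\ref{perp}(i) and (iv), reduce $P_{\twoheadrightarrow}\subseteq\overline{X}$ to a finite subsum of $\oplus_{i\in\Lambda}Q_i$ using finite generation, and deduce the orthogonality identities from additivity of $Hom$ together with Proposition~\ref{perp}(i). The only cosmetic difference is that you split the epimorphism $P^n\twoheadrightarrow Q$ by projectivity before locating $Q$ inside a finite subsum, whereas the paper notes directly that a finitely generated quotient of $P^n$ is already a quotient of $(\oplus_{i\in\Phi}Q_i)^n$ for some finite $\Phi\subseteq\Lambda$ --- the same finite-support reduction.
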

    \begin{proof}
Both $\varphi$ and $\psi$ are order reversing and $\varphi \psi (I) 
%=\varphi ({^\perp I})
\subseteq I$ since $I \in (^\perp I)^\perp$. If $[P] \in X$ then 
 $P \perp \varphi (X)$ by Proposition \ref{perp}(iii) with $Y= X^\perp$.
Hence $P \in \psi \varphi (X)$, so $X \subseteq \psi \varphi (X)$. Thus $(\varphi, \psi)$ is a Galois connection. Also $\psi (I)$ is a closed submonoid of $\mathcal{V}(A)$ for all $I \trianglelefteq A$ by Proposition \ref{perp}(i) and (iv). \\

$P_{\twoheadrightarrow}$ is closed under addition and a quotient of a quotient of $P^n$ is a quotient of $P^n$, hence $P_{\twoheadrightarrow}$ is a closed submonoid of $\mathcal{V}(A)$. Clearly $[Q_i] \in P_{\twoheadrightarrow}$ for all $i \in \Lambda$, hence $\overline{X} \subseteq P_{\twoheadrightarrow}$. Any finitely generated quotient of $P^n$ is a quotient of $Q^n$ where $Q= \oplus_{i \in \Phi} Q_i$ for some finite $\Phi \subseteq \Lambda$.  Since $Q \in \overline{X}$ we get $P_{\twoheadrightarrow} \subseteq \overline{X}$, so  $P_{\twoheadrightarrow}= \overline{X}$.\\

Since $Hom(\oplus_{i \in \Lambda} Q_i, A/I) \cong \prod_{i \in \Lambda} Hom(Q_i, A/I)$, the ideal $I$ is in $P^\perp $ if and only if $I \in Q_i^\perp$ for all $i \in \Lambda$, that is,  $P^\perp = X^\perp$. For $n>0$ since $Hom(P^n, A/I) \cong Hom(P, A/I)^n$ we have $P^\perp =(P^n)^\perp$. If $I \in (P^n)^\perp$ for all $n>0$ then $I \in (P_{\twoheadrightarrow})^\perp $ by Proposition \ref{perp}(i). Hence $X^\perp = P^\perp \subseteq (P_{\twoheadrightarrow})^\perp =\overline{X}^\perp$. By definition $X \subseteq \overline{X}$ and so $\overline{X}^\perp \subseteq X^\perp$. Thus $X^\perp =\overline{X}^\perp=P^\perp$.
\end{proof}

\section{Maximal Ideals of Products and $\mathbb{F}[x,x^{-1}]$}

Finally, we have a few facts about maximal ideals of rings with $1$ and some elementary facts specifically about the Laurent polynomial algebra $\mathbb{F}[x,x^{-1}]$.

\begin{fact}\label{max}
    If $R$ and $S$ are rings with $1$ then maximal ideals of $R\times S$ are either of the form $I\times S$ or $R\times J$ where $I$ (respectively $J$) is a maximal ideal of $R$ (respectively $S$). Hence maximal ideals of $R_1\times R_2 \times \cdots \times R_n$
are of the form  $R_1\times \cdots R_{k-1} \times I\times R_{k+1} \cdots \times R_n$   with $1\leq k\leq n$ 
where each $R_i$ is a ring with $1$ and $I $ is a maximal ideal of $R_k$.
\end{fact}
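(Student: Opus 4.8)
The plan is to first establish the structural fact that every two-sided ideal of a product $R \times S$ of rings with $1$ splits as a product of ideals, and then read off the maximal ideals from the corresponding quotient. The crucial tool is the pair of central idempotents $e = (1_R, 0)$ and $1-e = (0, 1_S)$ of $R \times S$. Given any ideal $K \trianglelefteq R \times S$, centrality of $e$ makes both $eK$ and $(1-e)K$ two-sided ideals of $R \times S$ (for $x \in R \times S$ we have $x(ek) = e(xk) \in eK$ and $(ek)x = e(kx) \in eK$). Since every $k \in K$ decomposes as $k = ek + (1-e)k$ with both summands in $K$, and since $eK \subseteq R \times \{0\}$ while $(1-e)K \subseteq \{0\} \times S$ meet only in $(0,0)$, we get $K = eK \oplus (1-e)K$. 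An ideal of $R \times S$ contained in $R \times \{0\}$ has the form $I \times \{0\}$ for an ideal $I \trianglelefteq R$, and likewise $(1-e)K = \{0\} \times J$ for an ideal $J \trianglelefteq S$, so $K = I \times J$. Thus every ideal of $R \times S$ is a product of ideals.

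Next I would pass to the quotient $(R \times S)/(I \times J) \cong (R/I) \times (S/J)$. A maximal ideal $M = I \times J$ is proper, so $(I,J) \neq (R,S)$. If both $I \neq R$ and $J \neq S$, then $I \times S$ is an ideal with $I \times J \subsetneq I \times S \subsetneq R \times S$, contradicting maximality; hence $I = R$ or $J = S$. In the case $I = R$ we have $M = R \times J$ with $J \neq S$, and if $J \subsetneq J' \subsetneq S$ for some ideal $J'$ of $S$ then $R \times J \subsetneq R \times J' \subsetneq R \times S$ would again contradict maximality, so $J$ is maximal in $S$; symmetrically the case $J = S$ yields $M = I \times S$ with $I$ maximal in $R$. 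This proves the two-factor claim.

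Finally, the statement for $R_1 \times \cdots \times R_n$ follows by induction on $n$, the case $n = 2$ being what was just shown. For the inductive step I would view $R_1 \times \cdots \times R_n$ as $T \times R_n$, where $T = R_1 \times \cdots \times R_{n-1}$ is again a ring with $1$. By the two-factor case its maximal ideals are either $T \times I$ with $I$ maximal in $R_n$ (already of the required form with $k = n$), or $M \times R_n$ with $M$ maximal in $T$; in the latter case the induction hypothesis rewrites $M$ as $R_1 \times \cdots \times R_{k-1} \times I \times R_{k+1} \times \cdots \times R_{n-1}$ with $I$ maximal in some $R_k$, $k \leq n-1$, and appending the factor $R_n$ produces the desired form.

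The only genuine obstacle is the first paragraph: showing that ideals of a product are products of ideals; everything afterward is bookkeeping. It is worth emphasizing that this decomposition uses in an essential way that $R$ and $S$ have identities, since $e$ and $1-e$ must exist and be central idempotents — which is precisely why this Fact is stated for rings with $1$ and why the local-units subtleties of the earlier sections do not enter here.
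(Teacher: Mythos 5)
Your proof is correct and follows essentially the same route as the paper: both hinge on using the central idempotents $(1_R,0)$ and $(0,1_S)$ to show that every ideal of $R\times S$ splits as $I\times J$, and then deduce maximality of one factor (the paper phrases this via the quotient isomorphism $(R\times S)/\mathfrak{m}\cong R/(R\cap\mathfrak{m})\times S/(S\cap\mathfrak{m})$ and the ideal correspondence, while you argue directly with containments, but this is only a cosmetic difference). The induction step is handled identically.
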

\begin{proof}
We identify $R$ with $\{(r,0)\}_{r \in R} \subseteq R\times S$, similarly $S$ with $\{(0,s)\}_{s \in S} \subseteq R\times S$. If $\mathfrak{m}$ is an ideal of $R\times S$ then $R\times S\longrightarrow R/(R\cap \mathfrak{m}) \times S/(S\cap \mathfrak{m})$ is an epimorphism where $(r,s)\mapsto (r+ R\cap \mathfrak{m}, \> s+S\cap \mathfrak{m})$ and we identify $R\cap \mathfrak{m}$ and $S\cap \mathfrak{m}$ with ideals of $R$ and $S$, respectively. The kernel of this epimorphism is $\mathfrak{m}$ since $(r,s) \in \mathfrak{m}$ implies that $(r,0)=(r,s)(0,1) \in \mathfrak{m}$ and similarly that $(0,s) \in \mathfrak{m}$. Hence the induced homomorphism $(R\times S)/\mathfrak{m} \longrightarrow R/(R\cap \mathfrak{m}) \times S/(S\cap \mathfrak{m})$ is an isomorphism.\\

We have a one-to-one correspondence between the ideals of $R\times S$ containing $\mathfrak{m}$ and the ideals of $R/(R\cap \mathfrak{m}) \times S/(S\cap \mathfrak{m})$ via this isomorphism. If $\mathfrak{m}$ is a maximal ideal of $R\times S $ and $R\cap \mathfrak{m}\neq R$ then the projection $R/(R\cap \mathfrak{m}) \times S/(S\cap \mathfrak{m}) \longrightarrow R/(R\cap \mathfrak{m})$ must be an isomorphism (otherwise its kernel would be a proper non-zero ideal). Hence $S\cap \mathfrak{m}=S$ and $I:=R\cap \mathfrak{m}$ is a maximal ideal. Similarly, if $S\cap \mathfrak{m}\neq S$ we conclude that the maximal ideal $\mathfrak{m}$ is of the form $(R\cap \mathfrak{m} )\times S$. The second statement follows by mathematical induction.
  \end{proof}

\begin{fact} 
\label{distinct}
If $I$ is a proper nonzero ideal of $\mathbb{F}[x,x^{-1}]$ then 
$I$ is generated by a unique monic $f(x) \in \mathbb{F}[x]$ with $f(0) \neq 0$ and 
$\mathbb{F}[x,x^{-1}]/I \cong \mathbb{F}[x]/(f(x))$. If $f(x)= \prod_{i=1}^k f_i(x)^{m_i}$ where $ f_i(x)$ are distinct irreducible polynomials and $m_i$ are positive integers then $\mathbb{F}[x,x^{-1}]/I \cong \prod_{i=1}^k \mathbb{F}[x]/(f_i(x)^{m_i})$. In particular, $\mathbb{F}[x]/(f(x))\cong \mathbb{F}^m$ as $\mathbb{F}$-algebras (where the multiplication on $\mathbb{F}^m$ is coordinate-wise) if and only if $f(x)$ is a product of distinct linear factors. 
    \end{fact}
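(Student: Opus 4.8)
The plan is to realize $\mathbb{F}[x,x^{-1}]$ as the localization $S^{-1}\mathbb{F}[x]$ of the PID $\mathbb{F}[x]$ at the multiplicative set $S=\{x^n : n\geq 0\}$, exploiting throughout that $x$ is a unit. First I would observe that since $x$ is invertible, every $\alpha \in \mathbb{F}[x,x^{-1}]$ equals $x^{-n}p(x)$ for some $p \in \mathbb{F}[x]$ and $n\geq 0$, so every element is a unit multiple of a genuine polynomial. Consequently, for any ideal $I$ the contraction $J := I \cap \mathbb{F}[x]$ generates $I$. As $\mathbb{F}[x]$ is a PID, $J=(g)$; writing $g=x^r f(x)$ with $f$ monic and $f(0)\neq 0$ (absorbing the unit $x^r$ and the leading coefficient) gives $I=(f)$. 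Nonzeroness of $I$ forces $f\neq 0$ and properness forces $\deg f \geq 1$. For uniqueness I would use that the units of $\mathbb{F}[x,x^{-1}]$ are exactly $\{cx^n : c \in \mathbb{F}^\times,\ n \in \mathbb{Z}\}$: if $f = c x^n f'$ with $f,f'$ monic and $f(0),f'(0)\neq 0$, comparing lowest-degree terms forces $n=0$ and then $c=1$.

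Next, to obtain $\mathbb{F}[x,x^{-1}]/I \cong \mathbb{F}[x]/(f)$, I would study the composite $\mathbb{F}[x]\hookrightarrow \mathbb{F}[x,x^{-1}]\twoheadrightarrow \mathbb{F}[x,x^{-1}]/I$. Surjectivity is the crux: because $f(0)\neq 0$ we have $\gcd(x,f)=1$ in $\mathbb{F}[x]$, so a B\'ezout relation $ax+bf=1$ shows $x^{-1}\equiv a \pmod I$, hence every $x^{-n}p(x)$ already lies in the image of $\mathbb{F}[x]$. The kernel is $I\cap \mathbb{F}[x]$, and I would show this equals $(f)$ inside $\mathbb{F}[x]$: clearly $(f)\subseteq I\cap\mathbb{F}[x]$, and if $p \in I\cap \mathbb{F}[x]$ then $x^m p = f q$ in $\mathbb{F}[x]$ for some $m,q$, whence $f \mid x^m p$ and, again using $\gcd(x,f)=1$, $f \mid p$. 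The first isomorphism theorem then yields the claim. The product decomposition is then the Chinese Remainder Theorem: for distinct irreducibles $f_i$ the prime powers $f_i^{m_i}$ are pairwise coprime, so the ideals $(f_i^{m_i})$ are pairwise comaximal in $\mathbb{F}[x]$, giving $\mathbb{F}[x]/(f)\cong \prod_i \mathbb{F}[x]/(f_i^{m_i})$, which combines with the isomorphism above.

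Finally, for the ``$\cong \mathbb{F}^m$'' criterion (with $m=\deg f=\dim_{\mathbb{F}}\mathbb{F}[x]/(f)$), the easy direction follows from CRT: if $f=\prod_{j=1}^m(x-c_j)$ with distinct $c_j\in\mathbb{F}$ then the $(x-c_j)$ are comaximal and each quotient is $\mathbb{F}$, so the product is $\mathbb{F}^m$. For the converse I would use that $\mathbb{F}[x]/(f)$ is generated as an $\mathbb{F}$-algebra by the single element $\bar x$, whose minimal polynomial is exactly $f$ (since $g(\bar x)=0$ iff $f\mid g$). An $\mathbb{F}$-algebra isomorphism $\phi:\mathbb{F}[x]/(f)\to \mathbb{F}^m$ sends $\bar x$ to some $(a_1,\dots,a_m)$ that must generate $\mathbb{F}^m$; the subalgebra it generates is $\{(p(a_1),\dots,p(a_m)) : p \in \mathbb{F}[x]\}$, which is all of $\mathbb{F}^m$ precisely when the $a_j$ are distinct (Lagrange interpolation). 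Distinctness being forced, the minimal polynomial of $(a_1,\dots,a_m)$ is $\prod_{j=1}^m(x-a_j)$, and since isomorphisms preserve minimal polynomials this equals $f$; hence $f$ is a product of distinct linear factors.

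I expect the main obstacle to lie in the localization bookkeeping of the first two paragraphs—correctly identifying the contracted ideal as $(f)$ and establishing surjectivity onto the quotient via the invertibility of $x$ modulo $f$—together with the single-generator/minimal-polynomial argument powering the converse of the last equivalence; the CRT steps are then routine.
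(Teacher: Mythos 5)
Your proposal is correct, and for most of the statement it travels essentially the paper's road: you normalize a generator of $I$ to a monic $f\in\mathbb{F}[x]$ with $f(0)\neq 0$ using that the units are $\lambda x^n$ (the paper invokes that $\mathbb{F}[x,x^{-1}]$ is itself a PID, while you contract to the PID $\mathbb{F}[x]$ first --- an immaterial difference), you prove $\mathbb{F}[x,x^{-1}]/I\cong\mathbb{F}[x]/(f)$ by showing $x^{-1}$ is congruent to a polynomial modulo $f$ (your B\'ezout relation $ax+bf=1$ versus the paper's $f=1-xh(x)$; your version also supplies the kernel computation the paper leaves implicit), and you get the product decomposition from the Chinese Remainder Theorem exactly as the paper does. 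The genuine divergence is in the converse of the final equivalence. The paper argues by counting maximal ideals: $\mathbb{F}^m$ has exactly $m$ of them, each local factor $\mathbb{F}[x]/(f_i^{m_i})$ has exactly one, so Fact \ref{max} gives $k=m$, and comparing this with $\dim_{\mathbb{F}}=\deg f=\sum m_i\deg f_i$ forces $m_i=\deg f_i=1$. You instead exploit that $\mathbb{F}[x]/(f)$ is generated by the single element $\bar x$ with minimal polynomial $f$: an isomorphism must send $\bar x$ to a generating tuple $(a_1,\dots,a_m)$ of $\mathbb{F}^m$, Lagrange interpolation forces the $a_j$ to be distinct, and preservation of minimal polynomials gives $f=\prod_j(x-a_j)$. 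Both arguments are sound; the paper's buys a reuse of its Fact \ref{max} and stays at the level of ideal counting, whereas yours is self-contained within this Fact and extracts slightly more information (it exhibits the roots of $f$ as the coordinates of the image of $\bar x$).
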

  
 \begin{proof}
  There is a $g(x) \in \mathbb{F}[x,x^{-1}]$ with $I=(g(x))$ 
   because $\mathbb{F}[x,x^{-1}]$ is a PID. Multiplying $g(x)\neq 0$ with a suitable power of $x$ (which is a unit in $\mathbb{F}[x,x^{-1}]$) and an element in $\mathbb{F}^\times:= \mathbb{F}\setminus \{0\}$, we obtain $I=(f(x))$ with $ f(x) \in \mathbb{F}[x]$ monic and $f(0) \neq 0$. The polynomial $f(x)$ is unique since the units in $\mathbb{F}[x,x^{-1}]$ are of the form $\lambda x^n$ with $\lambda \in \mathbb{F}^\times$ and $n \in \mathbb{Z}$.\\

The inclusion $\mathbb{F}[x] \hookrightarrow \mathbb{F}[x,x^{-1}]$ induces the monomorphism $\mathbb{F}[x]/(f(x)) \longrightarrow \mathbb{F}[x,x^{-1}]/(f(x))$. 
Let $f(x)=1-xh(x)$ with $h(x)\in \mathbb{F}[x]$. Then $x^{-1}+(f(x))= h(x)+(f(x)) \in \mathbb{F}[x,x^{-1}]/(f(x))$. Hence $\mathbb{F}[x,x^{-1}]/(f(x)) \cong \mathbb{F}[x]/(f(x))$. \\

If $f(x)= \prod_{i=1}^k f_i(x)^{m_i}$ where $ f_i(x) \in \mathbb{F}[x]$ are distinct irreducible polynomials and $m_i$ are positive integers then $\mathbb{F}[x,x^{-1}]/(f(x)) \cong \mathbb{F}[x]/(f(x))\cong \prod_{i=1}^k \mathbb{F}[x]/(f_i(x)^{m_i})$ by the Chinese Remainder Theorem.\\

If $(f(x))=(\prod_{i=1}^{k} (x-r_i))$ with distinct $r_i \in \mathbb{F}$ then $x \mapsto (r_1,r_2,\cdots r_k)$ defines an algebra isomorphism from $\mathbb{F}[x, x^{-1}]/(f(x))$ onto $\mathbb{F}^k$. For the converse, note that $\mathbb{F}^m$ has exactly $m$ maximal ideals (corresponding to a coordinate being 0). Since $\mathbb{F}[x]/(f_i(x)^{m_i})$ has a unique maximal ideal, namely $(f_i(x))+ (f_i(x)^{m_i})$, the algebra 
 $\mathbb{F}[x]/(\prod_{i=1}^k f_i(x)^{m_i})$ has $k$ maximal ideals by Fact \ref{max}. If $\mathbb{F}[x]/(\prod_{i=1}^k f_i(x)^{m_i}) \cong \mathbb{F}^m$ then 
 $deg \prod_{i=1}^k f_i(x)^{m_i}= dim^{\mathbb{F}} \>  \mathbb{F}[x]/(\prod_{i=1}^k f_i(x)^{m_i})= dim^{\mathbb{F}} \> \mathbb{F}^m = m=k$ by the count of maximal ideals in Fact \ref{max}. Hence $m_i=1$ and $deg f_i=1$ for $i=1,\cdots , k$. Since $f_i$ are distinct, we are done. 
\end{proof}\\

\noindent
{\bf Acknowledgement:} 
This study was supported by Scientific and Technological Research Council of Türkiye (TUBITAK) under the Grants 124F214 and 122F414. The author thanks TUBITAK for their support.

\noindent 
Ayten Ko\c{c}\\
Department of Mathematics\\
Gebze Technical  University, Gebze, TÜRK\.{I}YE\\ 
E-mail: aytenkoc@gtu.edu.tr\\